\newtheorem{theorem}{Theorem}[section]
\newtheorem{example}[theorem]{Example}
\newtheorem{lemma}[theorem]{Lemma}
\newtheorem{assumption}[theorem]{Assumption}
\newtheorem{definition}[theorem]{Definition}
\newtheorem{proposition}[theorem]{Proposition}
\newtheorem{remark}[theorem]{Remark}
\numberwithin{equation}{section}
\newenvironment{proof}[1][Proof]{\textbf{#1.} }
{\ \rule{0.75em}{0.75em}\smallskip}
\begin{document}

\begin{center}
\Large\bf $\alpha\ell_{1}-\beta\ell_{2}$ sparsity regularization for nonlinear ill-posed problems
\end{center}

\begin{center}
Liang Ding\footnote{Department of Mathematics, Northeast Forestry University, Harbin 150040, China;
e-mail: {\tt dl@nefu.edu.cn}. The work of this author was supported by the Fundamental Research Funds
for the Central Universities (no.\ 2572018BC02), Heilongjiang Postdoctoral Research Developmental Fund
(no.\ LBH-Q16008), the National Nature Science Foundation of China (no.\ 41304093).}\quad and \quad
Weimin Han\footnote{Department of Mathematics, University of Iowa, Iowa City, IA 52242, USA;
e-mail: {\tt weimin-han@uiowa.edu}.}
\end{center}

\medskip
\begin{quote}
{\bf Abstract.} In this paper, we consider the $\alpha\| \cdot\|_{\ell_1}-\beta\| \cdot\|_{\ell_2}$
sparsity regularization with parameter $\alpha\geq\beta\geq0$ for nonlinear ill-posed inverse problems.
We investigate the well-posedness of the regularization. Compared to the case where $\alpha>\beta\geq0$,
the results for the case $\alpha=\beta\geq0$ are weaker due to the lack of coercivity and Radon-Riesz property
of the regularization term. Under certain condition on the nonlinearity of $F$, we prove that
every minimizer of $ \alpha\| \cdot\|_{\ell_1}-\beta\| \cdot\|_{\ell_2}$
regularization is sparse. For the case $\alpha>\beta\geq0$, if the exact solution is sparse, we derive
convergence rate $O(\delta^{\frac{1}{2}})$ and $O(\delta)$ of the regularized solution under two commonly adopted
conditions on the nonlinearity of $F$, respectively.  In particular, it is shown that the iterative soft
thresholding algorithm can be utilized to solve
the $ \alpha\| \cdot\|_{\ell_1}-\beta\| \cdot\|_{\ell_2}$ regularization problem for nonlinear ill-posed equations.
Numerical results illustrate the efficiency of the proposed method.
\end{quote}

\smallskip
{\bf Keywords.}  sparsity regularization, nonlinear inverse problem, $\alpha\ell_1-\beta\ell_2$ regularization,
non-convex, iterative soft thresholding algorithm

\section{Introduction}
The investigation of the non-convex
$\alpha\|\cdot\|_{\ell_1}-\beta\| \cdot\|_{\ell_2}$ $(\alpha\ge\beta\geq0)$ regularization has attracted attention in the field of sparse recovery over
the last five years, see \cite{DH19,LCGK20,LY18,YSX17,YLHX15} and references therein.
As an alternative of the $\ell_p$-norm with $0\leq p<1$,
the advantages of using the functional $ \alpha\|\cdot\|_{\ell_1}-\beta\| \cdot\|_{\ell_2}$ $(\alpha\geq\beta\geq0)$
lie in the fact that it is a good approximation of the
$\ell_0$-norm and it has a simpler structure than the $\ell_0$-norm from the perspective of computation.
Moreover, it is difficult to determine the optimal exponent $p$ for $\ell_p$ ($0\leq p<1)$ regularization (\cite{LR17}).
Nevertheless, for the functional $\alpha\|\cdot\|_{\ell_1}-\beta\|\cdot\|_{\ell_2}$, it can be shown that
$\eta=\beta/\alpha$ plays a role similar to that of $p$ in $\ell_p$ regularization.
In this paper, we investigate
the potential of the regularization method for solving nonlinear ill-posed operator equations with sparse solutions.
In addition, we analyze the well-posedness of the regularization for the particular case $\alpha=\beta$.

We are interested in solving an ill-posed operator equation of the form
\begin{equation}\label{equ1_1}
F(x)=y,
\end{equation}
where $x$ is sparse, $F: \ell_2 \rightarrow Y$ is a weakly sequentially closed nonlinear operator mapping between
the $\ell_2$ space and a Hilbert space $Y$ with norms $\|\cdot\|_{\ell_2}$ and $\|\cdot\|_{Y}$, respectively.
Throughout this paper, we let $\langle \cdot,\cdot\rangle$ denote the inner product in the $\ell_2$ space and
$e_i=(\underbrace{0,\cdots,0,1}_i,0,\cdots)$. The exact data $y^{\dag}$ and the observed data $y^{\delta}$ satisfy
$\|y^{\delta}-y^{\dag}\|_Y\leq \delta$ with a noise level $\delta>0$. The most commonly adopted technique to
solve the problem (\ref{equ1_1}) is sparsity regularization, see the monographs \cite{F2010,SGGHL2009} and the special
issues \cite{BB18,DDD16,JM12,JMS17} for many developments on regularizing properties and minimization schemes.

The first theoretical analysis on sparsity regularization for ill-posed inverse problems dates back to 2004.
In the seminal paper \cite{DDD04}, Daubechies et al proposed an $\ell_p$ ($1\leq p\leq 2$) sparsity regularization
for linear ill-posed problems and established the convergence of an iterative soft thresholding algorithm.
Inspired by \cite{DDD04}, many investigations focused on the regularizing properties and
iteration schemes for linear ill-posed inverse problems, see \cite{BB18,F2010,SGGHL2009}. Subsequently,
the schemes and results were quickly extended to nonlinear ill-posed inverse problems. Much effort has been
devoted to investigating the regularization properties as well as the minimization of the sparsity regularization
for nonlinear ill-posed inverse problems, see \cite{JM12,J16,LMM12,RT05,RT06,TB10,ZW16} and the references therein.
We emphasize that in the above cited references only the convex case $p\geq 1$ is investigated. For the non-convex
case $0\leq p< 1$, particular conditions and techniques are needed to analyze the well-posedness and convergence rate.
In \cite{G09}, a sub-linear $\ell_q$ regularization is proposed and convergence is proved in the sense of the weak$^{*}$
topology on $\ell_1$. A multi-parameter Tikhonov regularization with $\ell_0$ constraint is presented in \cite{WLHC19,WLMC13}, where regularizing properties as well as convergence rate results are obtained. In \cite{Z09},
with the use of a superposition operator $\mathcal{N}_{p,q}$, the sparsity regularization with $0\leq p<1$ can be
studied within a more classical convex formulation with $1\leq q\leq 2$.  Then the well-known results on regularizing
properties of convex sparsity regularization can be utilized to analyze the original non-convex sparsity regularization.

Concerning the minimization of $\ell_p$ sparsity regularization with $0\leq p<1$, several numerical algorithms
were developed for linear ill-posed inverse problems, see \cite{BD09,GOY2016,IK14,WYZ19,XZWCL10,YLHX15}.
Unfortunately, the algorithms in these references, e.g.\ alternating direction method of multipliers (ADMM) (\cite{WYZ19}),
iteratively reweighted least squares (IRLS) (\cite{XZWCL10}), primal-dual active set method (\cite{IK14}) and
iterative hard thresholding (\cite{BD09}) can not be extended to nonlinear ill-posed equations directly.
Sparsity regularization with non-convex regularized term for nonlinear ill-posed inverse problems is far
from being investigated systematically, especially in computation. Though there is great potential in the
non-convex sparsity regularization for nonlinear ill-posed inverse problems, to the best of our knowledge,
only one result is available in the literature.  In \cite{RZ12},
the non-convex Tikhonov functional is transformed to a more viable one. Then a surrogate functional approach
is applied to the new convex functional straightforwardly.

In this paper, we solve the nonlinear
ill-posed inverse problem (\ref{equ1_1}) by the following regularization method:
\begin{equation}\label{equ1_2}
\min\mathcal{J}_{\alpha,\beta}^{\delta}(x)=\frac{1}{q}\| F(x)-y^{\delta}\|_Y^{q}+\mathcal{R}_{\alpha,\beta}(x),
\end{equation}
where $q\geq 1$ and
\begin{equation}\label{equ1_3}
\mathcal{R}_{\alpha,\beta}(x):=\alpha\|x\|_{\ell_1}-\beta\| x\|_{\ell_2},\quad \alpha\geq\beta\geq 0.
\end{equation}
For $\alpha>0$, denoting $\eta=\beta/\alpha$, we can equivalently express the functional in (\ref{equ1_3}) as
\[ \mathcal{R}_{\alpha,\beta}(x)=\alpha\,\mathcal{R}_{\eta}(x), \]
where $ \mathcal{R}_{\eta}(x):=\|x\|_{\ell_1}-\eta\| x\|_{\ell_2}$, $1\geq\eta\geq0$.
We will investigate the well-posedness of the problem (\ref{equ1_2}). For the case $\alpha>\beta\geq0$,
we show the existence, stability as well as convergence of regularized solutions under the assumption that
the nonlinear operator $F$ is weakly sequentially closed. The numerical experiments in \cite{DH19} show that
we can obtain satisfactory results even when $\alpha=\beta$. Actually, $\mathcal{R}_{\alpha,\beta}(x)$
behaves more and more like the $\ell_0$-norm as $\beta/\alpha\rightarrow 1$. So in this paper, we also analyze properties of ${\cal R}_{\alpha,\beta}(x)$ when $\alpha=\beta$, even though the well-posedness results of the regularization are weaker than that in the case $\alpha>\beta\ge 0$. For the case
$\alpha>\beta\geq0$, we identify the convergence rate under an appropriate source condition. As is standard
in analyzing convergence rates, we need to impose restrictions on the nonlinearity
of the operator $F$. Typically, the restrictions are utilized to bound the crucial term
$\langle F'(x^{\dag})(x-x^{\dag}),\omega_i\rangle$ in deriving convergence rate results. Under two commonly adopted conditions on the nonlinearity of $F$, we get convergence rates $O(\delta^{\frac{1}{2}})$ and $O(\delta)$ of the regularized solution in the $\ell_2$-norm, respectively.

For the minimization problem (\ref{equ1_2}), we propose an iterative soft thresholding algorithm (\cite{AM09,DDD04})
based on the generalized conditional gradient method (GCGM). In \cite{BBLM07,BLM09}, GCGM is applied to solve the minimization problem for sparsity regularization with the convex regularization term $\sum\limits_n w_n|\langle u, \phi_n\rangle|^p$ with $p\geq 1$, where $\{w_{n}>0\}$ are the weights, and $\{\phi_{n}\}$ is an
orthonormal basis of a Hilbert space. In this paper, it is shown that this method can be applied to the non-convex
$\alpha\ell_{1}-\beta\ell_{2}$ sparsity regularization for nonlinear inverse problems. For the case $q=2$, we rewrite the functional
$\mathcal{J}_{\alpha,\beta}^{\delta}$ in (\ref{equ1_2}) as
\[ \mathcal{J}_{\alpha,\beta}^{\delta}(x)=G(x)+\Phi(x), \]
where $G(x)=(1/2)\,\|F(x)-y^{\delta}\|_Y^2-\Theta(x)$, $\Phi(x)=\Theta(x)+\alpha\|x\|_{\ell_1}-\beta\|x\|_{\ell_2}$
and $\Theta(x)=(\lambda/2)\,\|x\|_{\ell_2}^2+\beta\|x\|_{\ell_2}$.
We show that if the nonlinear operator $F$ is continuously Fr\'echet differentiable and $F$ is bounded on bounded sets,
then the iterative soft thresholding algorithm is convergent.

The rest of the paper is organized as follows. In Section \ref{sec2}, we analyze the well-posedness
of the $\alpha\|\cdot\|_{\ell_1}-\beta\|\cdot\|_{\ell_2}$ $(\alpha\geq\beta\geq0)$ regularization. In Section \ref{sec3},
we derive the convergence rates in the $\ell_2$-norm under an appropriate source condition and two commonly adopted
conditions on the nonlinearity of $F$. In Section \ref{sec4}, we present an iterative soft thresholding algorithm
based on GCGM and discuss its convergence. Finally, some numerical experiments are presented in Section \ref{sec5}.


\section{Well-posedness of regularization problem}\label{sec2}

In this section we analyze the well-posedness of the regularization method, i.e., existence, stability
as well as convergence of regularized solutions. For the case $\alpha=\beta$, $\mathcal{R}_{\alpha,\beta}(x)$
does not have coercivity nor Radon-Riesz property, and the well-posedness result of the regularization is
weaker than that in the case $\alpha>\beta$.


Let us denote a minimizer of the functional $\mathcal{J}_{\alpha,\beta}^{\delta}(x)$ by $x^{\delta}_{\alpha,\beta}$, i.e.
\begin{equation}\label{equ2_1}
 \begin{array}{llc}
\displaystyle x^{\delta}_{\alpha,\beta}= \arg\min\limits_x \mathcal{J}_{\alpha,\beta}^{\delta}(x),\quad                 \mathcal{J}_{\alpha,\beta}^{\delta}(x)=\frac{1}{q}\|F(x)-y^{\delta}\|_Y^{q}+\mathcal{R}_{\alpha,\beta}(x)
 \end{array}
 \end{equation}
The $\mathcal{R}_{\eta}$-minimum solution is defined next.

\begin{definition}\label{Def1}
An element $x^{\dagger}\in \ell_2$ is called an $\mathcal R_{\eta}$-minimum solution to the
problem \eqref{equ1_1} if it satisfies
\[\displaystyle F(x^{\dagger})=y~~and~~\displaystyle \mathcal R_{\eta}(x^{\dagger})
=\min\left\{\mathcal R_{\eta}(x)\mid x\in \ell_2,\,F(x)=y\right\}.\]
\end{definition}

\begin{definition}\label{Def2}
$x\in \ell_2$ is called sparse if $\mathrm{supp}(x):=\{i\in\mathbb{N}\mid x_{i}\neq0\}$ is finite,
where $x_i$ is the $i^{\rm th}$ component of $x$.
\end{definition}

To characterize the sparsity, as in \cite{DDD04}, we define the index set
\begin{equation}\label{equ2_1a}
I(x^{\dag})=\{i\in \mathbb{N}\mid x_{i}^{\dag}\neq 0\},
\end{equation}
where $x_{i}^{\dag}$ is the $i^{\rm th}$ component of $x^{\dag}$.

\subsection{The case $\alpha>\beta\geq0$}

First, in Lemma \ref{lemma2_0} we recall some properties of $\mathcal{R}_{\alpha,\beta}(x)$ which are crucial tools in analyzing the well-posedness of regularization, see \cite{DH19} for the proofs.

\begin{lemma}\label{lemma2_0} The functional $\mathcal{R}_{\alpha,\beta}(x)$ has the following properties:

{\rm(i)  (Coercivity)} For $x\in \ell_2$, $\|x\|_{\ell_2}\rightarrow \infty$ implies $\mathcal{R}_{\alpha,\beta}(x)\rightarrow \infty$.

{\rm(ii)  (Weak lower semi-continuity)} If $x_n\rightharpoonup x$ in $\ell_2$ and $\{\mathcal{R}_{\alpha,\beta}(x_n)\}$
is bounded, then
\[\liminf_n\mathcal{R}_{\alpha,\beta}(x_n)\geq\mathcal{R}_{\alpha,\beta}(x).\]

{\rm (iii) (Radon-Riesz property)} If $x_n\rightharpoonup x$ in $\ell_2$ and $\mathcal{R}_{\alpha,\beta}(x_n)\rightarrow
\mathcal{R}_{\alpha,\beta}(x)$, then $\|x_n-x\|_{\ell_2}\rightarrow 0$.
\end{lemma}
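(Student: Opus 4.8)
The plan is to establish the three properties of $\mathcal{R}_{\alpha,\beta}(x)=\alpha\|x\|_{\ell_1}-\beta\|x\|_{\ell_2}$ for the case $\alpha>\beta\geq 0$, relying throughout on the elementary inequality $\|x\|_{\ell_2}\leq\|x\|_{\ell_1}$ valid in $\ell_2$, which immediately gives the lower bound $\mathcal{R}_{\alpha,\beta}(x)\geq(\alpha-\beta)\|x\|_{\ell_1}\geq 0$ together with $\mathcal{R}_{\alpha,\beta}(x)\geq(\alpha-\beta)\|x\|_{\ell_2}$. This single observation is the engine behind all three parts, since it converts the non-convex difference into something dominated by a genuine norm with positive coefficient $\alpha-\beta>0$.

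For coercivity (i), I would simply invoke $\mathcal{R}_{\alpha,\beta}(x)\geq(\alpha-\beta)\|x\|_{\ell_2}$; as $\|x\|_{\ell_2}\to\infty$ with $\alpha-\beta>0$ fixed, the right-hand side tends to infinity, so the claim follows at once. For weak lower semi-continuity (ii), the strategy is to treat the two terms separately. Assuming $x_n\weak x$ in $\ell_2$, I would note that $\|\cdot\|_{\ell_2}$ is weakly sequentially continuous is \emph{false} in general, but the correct facts are available: the $\ell_2$-norm is weakly lower semi-continuous, i.e.\ $\liminf_n\|x_n\|_{\ell_2}\geq\|x\|_{\ell_2}$, while weak convergence in $\ell_2$ forces componentwise convergence, from which Fatou's lemma (applied to the series $\sum_i|x_{n,i}|$) yields $\liminf_n\|x_n\|_{\ell_1}\geq\|x\|_{\ell_1}$. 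The subtlety is that I need a lower bound on the \emph{difference}, and the $-\beta\|x_n\|_{\ell_2}$ term has the ``wrong'' sign for a naive estimate. The resolution is to use that bounded weakly convergent sequences in $\ell_2$ actually converge in norm on finite coordinate blocks, so $\|x_n\|_{\ell_2}\to\|x\|_{\ell_2}$ may fail, but one can still pass to the limit carefully: writing $\mathcal{R}_{\alpha,\beta}(x_n)$ and using weak-lsc of $\|\cdot\|_{\ell_1}$ for the positive part together with the fact that along a subsequence realizing the liminf one controls $\|x_n\|_{\ell_2}$ via the boundedness hypothesis.

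The main obstacle is precisely part (iii), the Radon-Riesz property, because the negative $\ell_2$ term obstructs the usual argument. Here the plan is to exploit that convergence $\mathcal{R}_{\alpha,\beta}(x_n)\to\mathcal{R}_{\alpha,\beta}(x)$ combined with the weak-lsc estimates from (ii) pins down both $\|x_n\|_{\ell_1}\to\|x\|_{\ell_1}$ and $\|x_n\|_{\ell_2}\to\|x\|_{\ell_2}$ simultaneously: since each of $\alpha\liminf\|x_n\|_{\ell_1}\geq\alpha\|x\|_{\ell_1}$ and $-\beta\limsup\|x_n\|_{\ell_2}\leq-\beta\|x\|_{\ell_2}$ (the latter from weak-lsc of $\|\cdot\|_{\ell_2}$), a sandwiching argument forces equality in both, hence norm convergence $\|x_n\|_{\ell_2}\to\|x\|_{\ell_2}$. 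In a Hilbert space, weak convergence $x_n\weak x$ together with convergence of norms $\|x_n\|_{\ell_2}\to\|x\|_{\ell_2}$ implies strong convergence, via the identity $\|x_n-x\|_{\ell_2}^2=\|x_n\|_{\ell_2}^2-2\langle x_n,x\rangle+\|x\|_{\ell_2}^2\to 0$. The delicate point to get right is disentangling the liminf/limsup bookkeeping so that both norm limits are forced rather than merely one, and I expect the cleanest route is to argue first that the $\ell_1$-norm converges, then deduce the $\ell_2$-norm convergence, and finally apply the Hilbert-space Radon-Riesz identity; since the detailed verification is carried out in \cite{DH19}, I would reference it for the routine estimates while emphasizing the structural role of $\alpha-\beta>0$.
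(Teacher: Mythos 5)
First, note that the paper does not actually prove Lemma \ref{lemma2_0}: it is quoted from \cite{DH19} ("see \cite{DH19} for the proofs"), so there is no in-paper argument to compare against line by line. Judged on its own merits, your part (i) is correct and complete: $\|x\|_{\ell_2}\le\|x\|_{\ell_1}$ gives $\mathcal{R}_{\alpha,\beta}(x)\ge(\alpha-\beta)\|x\|_{\ell_2}$, and $\alpha-\beta>0$ does the rest. Parts (ii) and (iii), however, have a genuine gap at exactly the point you flag as "delicate" and then do not resolve. For (ii), weak lower semicontinuity of $\|\cdot\|_{\ell_1}$ and of $\|\cdot\|_{\ell_2}$ both produce inequalities of the form $\liminf_n\|x_n\|\ge\|x\|$, and because the $\ell_2$ term enters with a \emph{negative} coefficient these two inequalities cannot be added to bound $\liminf_n\mathcal{R}_{\alpha,\beta}(x_n)$ from below; "controlling $\|x_n\|_{\ell_2}$ via the boundedness hypothesis" does not close this, since boundedness gives no upper bound by $\|x\|_{\ell_2}$ (take $x_n=e_n\rightharpoonup 0$). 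For (iii), the inequality you invoke, "$-\beta\limsup_n\|x_n\|_{\ell_2}\le-\beta\|x\|_{\ell_2}$ from weak-lsc," is not what weak-lsc gives (it controls the $\liminf$ from below, not the $\limsup$ from above), and the proposed sandwich does not rule out the scenario in which the excess $\lim\|x_n\|_{\ell_1}-\|x\|_{\ell_1}>0$ is exactly cancelled by $\beta/\alpha$ times the excess in the $\ell_2$ norm, leaving $\mathcal{R}_{\alpha,\beta}(x_n)\to\mathcal{R}_{\alpha,\beta}(x)$ with no strong convergence.

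The missing idea that makes both parts work is a splitting (Brezis--Lieb type) argument for the weakly null remainder $r_n:=x_n-x$. After passing to a subsequence realizing the relevant $\liminf$ and along which $\|r_n\|_{\ell_1}$ and $\|r_n\|_{\ell_2}$ converge, one has $\|x_n\|_{\ell_1}=\|x\|_{\ell_1}+\|r_n\|_{\ell_1}+o(1)$ and $\|x_n\|_{\ell_2}^2=\|x\|_{\ell_2}^2+\|r_n\|_{\ell_2}^2+o(1)$. Writing $d:=\lim_n\|r_n\|_{\ell_2}$ and using $\|r_n\|_{\ell_1}\ge\|r_n\|_{\ell_2}$ together with $\sqrt{a^2+d^2}\le a+d$, one gets
\begin{equation*}
\lim_n\mathcal{R}_{\alpha,\beta}(x_n)\;\ge\;\alpha\bigl(\|x\|_{\ell_1}+d\bigr)-\beta\bigl(\|x\|_{\ell_2}+d\bigr)\;=\;\mathcal{R}_{\alpha,\beta}(x)+(\alpha-\beta)\,d .
\end{equation*}
Since $d\ge 0$ this proves (ii) (even for $\alpha=\beta$, consistent with the paper's remark that weak lsc survives in that case), and if in addition $\mathcal{R}_{\alpha,\beta}(x_n)\to\mathcal{R}_{\alpha,\beta}(x)$ with $\alpha>\beta$, it forces $d=0$, i.e.\ $\|x_n-x\|_{\ell_2}\to 0$, which is (iii); note this last step delivers strong convergence directly, without the detour through "norm convergence plus weak convergence." It is precisely the quantity $(\alpha-\beta)d$ that encodes the structural role of $\alpha-\beta>0$ you wanted to emphasize, and without this splitting your argument for (ii) and (iii) does not go through.
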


\begin{lemma}\label{lemma2_1}
Assume the sequence $\{\|y_n\|_{Y}\}$ is bounded in $Y$. For a given $M>0$, let $\{x_n\}\in \ell_2$ and
\begin{equation}\label{equlemma2_1}
\frac{1}{q}\| F(x_{n})-y_{n}\|_Y^{q} +\mathcal{R}_{\alpha,\beta}(x_n)\leq M.
\end{equation}
Then there exist an $x\in \ell_2$ and a subsequence $\{x_{n_k}\}$ of $\{x_n\}$ such that $x_{n_k}\rightharpoonup x$ and
$F(x_{n_k})\rightharpoonup F(x)$.
\end{lemma}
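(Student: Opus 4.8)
The plan is to deduce from the uniform bound \eqref{equlemma2_1} that both $\{x_n\}$ and $\{F(x_n)\}$ are bounded (in $\ell_2$ and in $Y$, respectively), then extract weakly convergent subsequences from each, and finally invoke the weak sequential closedness of $F$ to identify the weak limit of the image sequence as $F(x)$.

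First I would bound $\{x_n\}$ in $\ell_2$. Since $\alpha\geq\beta$ and $\|x\|_{\ell_2}\leq\|x\|_{\ell_1}$, the term $\mathcal{R}_{\alpha,\beta}(x_n)=\alpha\|x_n\|_{\ell_1}-\beta\|x_n\|_{\ell_2}$ is nonnegative, so dropping the data-fidelity term in \eqref{equlemma2_1} gives $\mathcal{R}_{\alpha,\beta}(x_n)\leq M$ for all $n$. If $\{x_n\}$ were unbounded, I could pass to a subsequence with $\|x_{n_j}\|_{\ell_2}\to\infty$, and the coercivity in Lemma \ref{lemma2_0}(i) would force $\mathcal{R}_{\alpha,\beta}(x_{n_j})\to\infty$, contradicting this bound. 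Hence $\{x_n\}$ is bounded in $\ell_2$. Because $\ell_2$ is a Hilbert space and therefore reflexive, a bounded sequence admits a weakly convergent subsequence: there exist $x\in\ell_2$ and a subsequence $\{x_{n_k}\}$ with $x_{n_k}\rightharpoonup x$.

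Next I would establish that the image sequence is bounded in $Y$. Nonnegativity of $\mathcal{R}_{\alpha,\beta}$ together with \eqref{equlemma2_1} yields $\frac{1}{q}\|F(x_n)-y_n\|_Y^q\leq M$, so $\|F(x_n)-y_n\|_Y\leq(qM)^{1/q}$. Since $\{\|y_n\|_Y\}$ is bounded by hypothesis, the triangle inequality shows $\{F(x_n)\}$ is bounded in $Y$. As $Y$ is a Hilbert space, I can pass to a further subsequence of $\{x_{n_k}\}$, which I relabel again as $\{x_{n_k}\}$, along which $F(x_{n_k})\rightharpoonup z$ for some $z\in Y$, while retaining $x_{n_k}\rightharpoonup x$.

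Finally, I would apply the assumed weak sequential closedness of $F$: from $x_{n_k}\rightharpoonup x$ in $\ell_2$ and $F(x_{n_k})\rightharpoonup z$ in $Y$ it follows that $x$ lies in the domain of $F$ and $F(x)=z$, whence $F(x_{n_k})\rightharpoonup F(x)$, completing the proof. The only genuinely nontrivial ingredient is this last step, and it is precisely where the structural hypothesis on $F$, rather than mere continuity, is needed: without weak sequential closedness one could not identify the weak limit $z$ with $F(x)$, since the weak convergence $x_{n_k}\rightharpoonup x$ carries no information about $F(x_{n_k})$ for a general nonlinear $F$. Everything else, namely the nonnegativity of the regularizer, the coercivity-based boundedness argument, and the extraction of weakly convergent subsequences, is routine.
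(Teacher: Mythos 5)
Your proposal is correct and follows essentially the same route as the paper's proof: bound $\mathcal{R}_{\alpha,\beta}(x_n)$ from \eqref{equlemma2_1}, use the coercivity from Lemma \ref{lemma2_0}(i) to bound $\{x_n\}$ in $\ell_2$, bound $\{F(x_n)\}$ in $Y$ via the data term and the boundedness of $\{y_n\}$, extract weakly convergent subsequences, and identify the limit with $F(x)$ by weak sequential closedness. The only difference is that you spell out the nonnegativity of $\mathcal{R}_{\alpha,\beta}$ and the contradiction argument for coercivity, which the paper leaves implicit.
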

\begin{proof}
By \eqref{equlemma2_1}, $\{\mathcal{R}_{\alpha,\beta}(x_n)\}$ is bounded. It follows from the coercivity of $\mathcal{R}_{\alpha,\beta}(x)$ that $\{\|x_n\|_{\ell_2}\}$ is bounded.
Meanwhile, since $\{\|y_n\|_{Y}\}$ is bounded, $\{\|F(x_{n})\|_{Y}\}$ is bounded. Hence, there exists a subsequence
$\{x_{n_k}\}$ of $\{x_{n}\}$, $x\in \ell_2$ and $y\in Y$ such that
\[ x_{n_k}\rightharpoonup x\ {\rm in\ }\ell_2, \quad F(x_{n_k})\rightharpoonup y\ {\rm in\ }Y .\]
Since $F$ is weakly sequentially closed, $F(x)=y$. This proves the lemma.
\end{proof}


We have the existence, stability as well as convergence of the regularized solution given in the next three results, similar to Theorems 2.11, 2.12 and 2.13 in \cite{DH19}. Their proofs are based on the properties stated in Lemmas \ref{lemma2_0} and \ref{lemma2_1}.

\begin{theorem} {\rm (Existence)}   
For any $y^{\delta}\in Y$, there exists at least one minimizer to $\mathcal{J}_{\alpha,\beta}^{\delta}(x)$ in $\ell_2$.
\end{theorem}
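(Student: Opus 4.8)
The plan is to apply the direct method of the calculus of variations, with Lemmas \ref{lemma2_0} and \ref{lemma2_1} supplying exactly the compactness and lower semicontinuity that the argument needs. First I would check that the functional is bounded below so that the infimum is a genuine real number. Since $\alpha\ge\beta\ge0$ and $\|x\|_{\ell_2}\le\|x\|_{\ell_1}$, we have $\mathcal{R}_{\alpha,\beta}(x)=\alpha\|x\|_{\ell_1}-\beta\|x\|_{\ell_2}\ge(\alpha-\beta)\|x\|_{\ell_1}\ge0$, and the fidelity term is nonnegative, so $m:=\inf_{x\in\ell_2}\mathcal{J}_{\alpha,\beta}^{\delta}(x)$ lies in $[0,\infty)$. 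I would then pick a minimizing sequence $\{x_n\}\subset\ell_2$ with $\mathcal{J}_{\alpha,\beta}^{\delta}(x_n)\to m$, so that $\mathcal{J}_{\alpha,\beta}^{\delta}(x_n)\le m+1=:M$ for all large $n$.

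Next, taking $y_n\equiv y^{\delta}$ (a constant, hence bounded, sequence) in Lemma \ref{lemma2_1}, the bound $\frac1q\|F(x_n)-y^{\delta}\|_Y^q+\mathcal{R}_{\alpha,\beta}(x_n)\le M$ yields a subsequence $\{x_{n_k}\}$ and an element $x\in\ell_2$ with $x_{n_k}\rightharpoonup x$ in $\ell_2$ and $F(x_{n_k})\rightharpoonup F(x)$ in $Y$. It then remains only to verify that this weak limit $x$ actually attains the infimum.

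Finally I would pass to the limit term by term using weak lower semicontinuity. Since $F(x_{n_k})-y^{\delta}\rightharpoonup F(x)-y^{\delta}$ and the $Y$-norm is weakly lower semicontinuous, $\|F(x)-y^{\delta}\|_Y\le\liminf_k\|F(x_{n_k})-y^{\delta}\|_Y$; because $t\mapsto t^q/q$ is continuous and nondecreasing on $[0,\infty)$ for $q\ge1$, this inequality survives the power, giving $\frac1q\|F(x)-y^{\delta}\|_Y^q\le\liminf_k\frac1q\|F(x_{n_k})-y^{\delta}\|_Y^q$. The fidelity term keeps $\{\mathcal{R}_{\alpha,\beta}(x_{n_k})\}$ bounded by $M$, so Lemma \ref{lemma2_0}(ii) applies and gives $\mathcal{R}_{\alpha,\beta}(x)\le\liminf_k\mathcal{R}_{\alpha,\beta}(x_{n_k})$. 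Adding the two inequalities and using the superadditivity of $\liminf$ produces $\mathcal{J}_{\alpha,\beta}^{\delta}(x)\le\liminf_k\mathcal{J}_{\alpha,\beta}^{\delta}(x_{n_k})=m$, so $x$ is a minimizer.

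The main obstacle here is conceptual rather than computational: $\mathcal{R}_{\alpha,\beta}$ is non-convex, so the ordinary convex weak lower semicontinuity is unavailable and cannot simply be quoted. The whole difficulty is thus absorbed into Lemma \ref{lemma2_0}(ii) together with the weak sequential closedness of $F$ (used inside Lemma \ref{lemma2_1}) to secure $F(x_{n_k})\rightharpoonup F(x)$; once these two facts are granted, what remains is the standard lower-semicontinuity-plus-coercivity scheme and presents no real difficulty.
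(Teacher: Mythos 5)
Your proof is correct and follows essentially the same route as the paper, which omits the details but states explicitly that existence follows from the properties in Lemmas \ref{lemma2_0} and \ref{lemma2_1} via the standard direct method (as in Theorem 2.11 of \cite{DH19}). Your argument—minimizing sequence, Lemma \ref{lemma2_1} with $y_n\equiv y^{\delta}$ for weak compactness and $F(x_{n_k})\rightharpoonup F(x)$, then weak lower semicontinuity of the $Y$-norm and Lemma \ref{lemma2_0}(ii)—is exactly the intended one.
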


\begin{theorem}\label{theory2} {\rm (Stability)}
Let $\alpha_n> \beta_n\geq0$, $\alpha_n\rightarrow \alpha$, $\beta_n\rightarrow \beta$ as $n\rightarrow \infty$.
Let the sequence $\{y_n\}\subset Y$ be convergent to $y^{\delta}\in Y$, and let $x_n$ be a minimizer to
$\mathcal{J}_{\alpha_n,\beta_n}^{\delta_n}(x)$. Then the sequence $\{x_n\}$ contains a subsequence converging
to a minimizer of $\mathcal{J}_{\alpha,\beta}^{\delta}(x)$. Furthermore, if $\mathcal{J}_{\alpha,\beta}^{\delta}(\cdot)$
has a unique minimizer $x_{\alpha,\beta}^{\delta}$, then
$\lim_{k \rightarrow \infty} \| x_{n_k}-x_{\alpha,\beta}^{\delta} \|_{\ell_2}=0$.
\end{theorem}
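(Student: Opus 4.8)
The plan is to follow the standard stability argument for Tikhonov-type regularization, adapted to the fact that both the data $y_n$ and the parameters $(\alpha_n,\beta_n)$ vary, and to upgrade weak convergence to strong convergence via the Radon-Riesz property of Lemma \ref{lemma2_0}(iii). Throughout I use that the limiting parameters satisfy $\alpha>\beta\ge 0$ (the setting of this subsection), so that $\alpha_n-\beta_n\ge c>0$ for some $c$ and all large $n$. First I would establish uniform bounds. Fixing an arbitrary $x\in\ell_2$, minimality of $x_n$ gives $\mathcal{J}_{\alpha_n,\beta_n}^{\delta_n}(x_n)\le \mathcal{J}_{\alpha_n,\beta_n}^{\delta_n}(x)$, and the right-hand side converges to $\mathcal{J}_{\alpha,\beta}^{\delta}(x)$ since $y_n\to y^\delta$, $\alpha_n\to\alpha$ and $\beta_n\to\beta$; hence $\mathcal{J}_{\alpha_n,\beta_n}^{\delta_n}(x_n)\le M$ for some $M$. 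This yields both $\|F(x_n)-y_n\|_Y^q\le qM$ and $\mathcal{R}_{\alpha_n,\beta_n}(x_n)\le M$. Using $\|x_n\|_{\ell_1}\ge\|x_n\|_{\ell_2}$ I obtain the two coercivity estimates $(\alpha_n-\beta_n)\|x_n\|_{\ell_2}\le \mathcal{R}_{\alpha_n,\beta_n}(x_n)$ and $(\alpha_n-\beta_n)\|x_n\|_{\ell_1}\le \mathcal{R}_{\alpha_n,\beta_n}(x_n)$, so that both $\{\|x_n\|_{\ell_2}\}$ and, crucially, $\{\|x_n\|_{\ell_1}\}$ are bounded.

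By reflexivity of $\ell_2$ and boundedness of $\{\|F(x_n)\|_Y\}$ (a consequence of $\{y_n\}$ being bounded), I extract a subsequence with $x_{n_k}\rightharpoonup x^\ast$ in $\ell_2$ and $F(x_{n_k})\rightharpoonup F(x^\ast)$ in $Y$, the identification of the weak limit of $F(x_{n_k})$ being exactly the weak sequential closedness argument of Lemma \ref{lemma2_1}. The parameter drift is then absorbed by the $\ell_1$-bound: since $|\mathcal{R}_{\alpha_{n_k},\beta_{n_k}}(x_{n_k})-\mathcal{R}_{\alpha,\beta}(x_{n_k})|\le |\alpha_{n_k}-\alpha|\,\|x_{n_k}\|_{\ell_1}+|\beta_{n_k}-\beta|\,\|x_{n_k}\|_{\ell_2}\to 0$, the values of $\mathcal{R}_{\alpha_{n_k},\beta_{n_k}}$ and $\mathcal{R}_{\alpha,\beta}$ on $\{x_{n_k}\}$ agree up to a vanishing perturbation, which lets me pass freely between the two in all limit computations.

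To show $x^\ast$ minimizes $\mathcal{J}_{\alpha,\beta}^{\delta}$, I combine weak lower semicontinuity of the norm for the residual (using $F(x_{n_k})-y_{n_k}\rightharpoonup F(x^\ast)-y^\delta$, a weak-plus-strong limit) with Lemma \ref{lemma2_0}(ii) for the regularizer, and the superadditivity $\liminf a_k+\liminf b_k\le\liminf(a_k+b_k)$, to get $\mathcal{J}_{\alpha,\beta}^{\delta}(x^\ast)\le\liminf_k \mathcal{J}_{\alpha_{n_k},\beta_{n_k}}^{\delta_{n_k}}(x_{n_k})$. Minimality against any fixed test point $x$ gives $\mathcal{J}_{\alpha_{n_k},\beta_{n_k}}^{\delta_{n_k}}(x_{n_k})\le \mathcal{J}_{\alpha_{n_k},\beta_{n_k}}^{\delta_{n_k}}(x)\to\mathcal{J}_{\alpha,\beta}^{\delta}(x)$, whence $\mathcal{J}_{\alpha,\beta}^{\delta}(x^\ast)\le\mathcal{J}_{\alpha,\beta}^{\delta}(x)$ for every $x\in\ell_2$.

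For strong convergence I specialize the test point to $x=x^\ast$: the chain of inequalities then collapses to $\lim_k\mathcal{J}_{\alpha_{n_k},\beta_{n_k}}^{\delta_{n_k}}(x_{n_k})=\mathcal{J}_{\alpha,\beta}^{\delta}(x^\ast)$. Writing this limiting total as the sum of the residual and regularization pieces and invoking the two lower-semicontinuity bounds separately, a short liminf/limsup bookkeeping forces each piece to converge individually; in particular $\mathcal{R}_{\alpha,\beta}(x_{n_k})\to\mathcal{R}_{\alpha,\beta}(x^\ast)$. Together with $x_{n_k}\rightharpoonup x^\ast$, the Radon-Riesz property (Lemma \ref{lemma2_0}(iii)) delivers $\|x_{n_k}-x^\ast\|_{\ell_2}\to 0$. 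If the minimizer $x_{\alpha,\beta}^\delta$ is unique, the same argument applied to an arbitrary subsequence of $\{x_n\}$ produces a further subsequence converging to $x_{\alpha,\beta}^\delta$, and the subsequence principle upgrades this to convergence of the whole sequence. The one genuinely new ingredient relative to the fixed-parameter proof is the parameter drift: I expect the main obstacle to be the need for a uniform $\ell_1$ bound (not merely an $\ell_2$ bound) in order to annihilate $\mathcal{R}_{\alpha_{n_k},\beta_{n_k}}-\mathcal{R}_{\alpha,\beta}$, and this is precisely where the strict limiting inequality $\alpha>\beta$ enters, through $\alpha_n-\beta_n\ge c>0$.
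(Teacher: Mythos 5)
Your proof is correct and follows essentially the route the paper intends: the paper omits the details, stating only that the stability proof is "similar to Theorem 2.12 in [DH19]" and rests on Lemmas \ref{lemma2_0} and \ref{lemma2_1}, and your argument is exactly that standard scheme (uniform bound from minimality, coercivity of $\mathcal{R}_{\alpha,\beta}$ via $\alpha-\beta>0$, weak sequential closedness to extract limits, weak lower semicontinuity to identify the limit as a minimizer, and the Radon--Riesz property to upgrade to strong convergence). Your explicit handling of the parameter drift through the uniform $\ell_1$ bound is a detail the paper glosses over, and your observation that the strict inequality $\alpha>\beta$ in the limit is what makes this work is a correct and worthwhile clarification of the hypotheses.
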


\begin{theorem}\label{theory3} {\rm (Convergence)}
Let $\alpha_n:=\alpha(\delta_n)$, $\beta_{n}:=\beta(\delta_{n})$, $\alpha_n>\beta_n\ge 0$ satisfy
\[ \lim_{n\to\infty}\alpha_n=0, \quad\lim_{n\to\infty}\beta_n=0\quad and\quad
\lim_{n\to\infty}\frac{\delta_n^{q}}{\alpha_n}=0.\]
Assume that $\displaystyle\eta=\lim_{n\to\infty}\eta_n\in [0,1)$ exists, where
$\eta_n=\beta_n/\alpha_n$.
Let $\delta_n\rightarrow 0$ as $n\rightarrow +\infty$ and $y^{\delta_n}$ satisfy $\| y-y^{\delta_n}\|\leq\delta_n$.
Moreover, let
\[  \displaystyle x_{\alpha_{n},\beta_{n}}^{\delta_{n}} \in
\arg\min\mathcal{J}_{\alpha_n,\beta_n}^{\delta_n}(x).\]
Then $\{x_{\alpha_{n},\beta_{n}}^{\delta_{n}}\}$ has a subsequence, still denoted by $\{x_{\alpha_n,\beta_n}^{\delta_n}\}$,
converging to an $\mathcal{R}_{\eta}$-minimizing solution $x^{\dag}$ in $\ell_2$.
Furthermore, if the $\mathcal{R}_{\eta}$-minimizing solution $x^{\dag}$ is unique, then
\[\lim_{n \rightarrow +\infty} \| x_{\alpha_{n},\beta_n}^{\delta_{n}}-x^{\dag}\|_{\ell_2}=0.\]
\end{theorem}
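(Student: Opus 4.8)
The plan is to run the classical direct-method (Tikhonov) convergence argument, with the one twist that here both the noise level $\delta_n$ and the penalty $\mathcal{R}_{\alpha_n,\beta_n}=\alpha_n\mathcal{R}_{\eta_n}$ vary with $n$. Write $x_n:=x_{\alpha_n,\beta_n}^{\delta_n}$ and fix any solution $\hat x$ of $F(x)=y$ with finite $\ell_1$-norm (such $\hat x$ exists since the exact solution is sparse). First I would derive \emph{a priori} estimates from minimality: because $x_n$ minimizes $\mathcal{J}_{\alpha_n,\beta_n}^{\delta_n}$ and $F(\hat x)=y$ with $\|y-y^{\delta_n}\|_Y\le\delta_n$, the inequality $\mathcal{J}_{\alpha_n,\beta_n}^{\delta_n}(x_n)\le \mathcal{J}_{\alpha_n,\beta_n}^{\delta_n}(\hat x)$ gives, after dividing the penalty part by $\alpha_n$,
\[ \tfrac1q\|F(x_n)-y^{\delta_n}\|_Y^q \le \tfrac1q\delta_n^q + \alpha_n\mathcal{R}_{\eta_n}(\hat x),\qquad \mathcal{R}_{\eta_n}(x_n)\le \frac{\delta_n^q}{q\alpha_n}+\mathcal{R}_{\eta_n}(\hat x). \]
Since $\alpha_n\to0$, $\delta_n\to0$, $\delta_n^q/\alpha_n\to0$ and $\mathcal{R}_{\eta_n}(\hat x)\to\mathcal{R}_\eta(\hat x)$, the first bound forces $\|F(x_n)-y\|_Y\to0$, while the second keeps $\{\mathcal{R}_{\eta_n}(x_n)\}$ bounded; using $\mathcal{R}_{\eta_n}(x_n)\ge(1-\eta_n)\|x_n\|_{\ell_2}$ with $\eta_n\to\eta<1$ then bounds $\{\|x_n\|_{\ell_2}\}$. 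As in Lemma \ref{lemma2_1} I extract a subsequence with $x_{n_k}\weak\bar x$ in $\ell_2$ and $F(x_{n_k})\weak F(\bar x)$ in $Y$ (weak sequential closedness of $F$), and combining with $F(x_n)\to y$ I get $F(\bar x)=y$.

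Next I would show $\bar x$ is $\mathcal{R}_\eta$-minimizing; this is the step where the varying $\eta_n$ must be handled and is, I expect, the only genuine obstacle. The device is the elementary identity $\mathcal{R}_{\eta_n}(x_{n_k})-\mathcal{R}_\eta(x_{n_k})=(\eta-\eta_{n_k})\|x_{n_k}\|_{\ell_2}$, whose right-hand side tends to $0$ by the uniform $\ell_2$-bound; hence every $\liminf$ and $\limsup$ of $\mathcal{R}_{\eta_{n_k}}(x_{n_k})$ coincides with that of the \emph{fixed} functional $\mathcal{R}_\eta(x_{n_k})$, to which Lemma \ref{lemma2_0} applies (note $\mathcal{R}_\eta=\mathcal{R}_{1,\eta}$ with $1>\eta\ge0$). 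Concretely, for an arbitrary solution $\tilde x$ the second a priori bound (with $\hat x$ replaced by $\tilde x$) gives $\limsup_k\mathcal{R}_{\eta_{n_k}}(x_{n_k})\le\mathcal{R}_\eta(\tilde x)$, while weak lower semicontinuity (Lemma \ref{lemma2_0}(ii), legitimate since $\{\mathcal{R}_\eta(x_{n_k})\}$ is bounded) yields $\mathcal{R}_\eta(\bar x)\le\liminf_k\mathcal{R}_\eta(x_{n_k})=\liminf_k\mathcal{R}_{\eta_{n_k}}(x_{n_k})$. Chaining these inequalities shows $\mathcal{R}_\eta(\bar x)\le\mathcal{R}_\eta(\tilde x)$ for every solution $\tilde x$, so $\bar x$ is an $\mathcal{R}_\eta$-minimizing solution, which I rename $x^\dagger$.

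Finally I would upgrade to strong convergence and dispose of uniqueness. Taking $\tilde x=x^\dagger$ in the two one-sided bounds forces $\mathcal{R}_{\eta_{n_k}}(x_{n_k})\to\mathcal{R}_\eta(x^\dagger)$, hence $\mathcal{R}_\eta(x_{n_k})\to\mathcal{R}_\eta(x^\dagger)$ by the same $(\eta-\eta_{n_k})\|x_{n_k}\|_{\ell_2}\to0$ device; then $x_{n_k}\weak x^\dagger$ together with convergence of the penalty values lets the Radon--Riesz property (Lemma \ref{lemma2_0}(iii)) give $\|x_{n_k}-x^\dagger\|_{\ell_2}\to0$. If the $\mathcal{R}_\eta$-minimizing solution is unique, a standard subsequence-of-subsequence argument promotes this to the full sequence: any subsequence of $\{x_n\}$ has, by the above, a further subsequence converging strongly to an $\mathcal{R}_\eta$-minimizing solution, necessarily $x^\dagger$, so $\|x_n-x^\dagger\|_{\ell_2}\to0$. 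The main difficulty throughout is the simultaneous variation of $\eta_n$ and $x_n$; once neutralized by the identity $\mathcal{R}_{\eta_n}(\cdot)-\mathcal{R}_\eta(\cdot)=(\eta-\eta_n)\|\cdot\|_{\ell_2}$ on the bounded sequence, the proof reduces to the fixed-parameter scheme supported by Lemma \ref{lemma2_0}.
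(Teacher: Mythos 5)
Your proof is correct and takes essentially the same route the paper intends: the paper omits the argument, deferring to Theorem 2.13 of \cite{DH19} and noting that the proof rests on Lemmas \ref{lemma2_0} and \ref{lemma2_1}, which is precisely the scheme you execute (a priori bounds from minimality, weak subsequential limit via coercivity and weak sequential closedness, weak lower semicontinuity to identify the limit as $\mathcal{R}_{\eta}$-minimizing, and the Radon--Riesz property plus a subsequence-of-subsequences argument for strong convergence). Your explicit control of the varying parameter through the identity $\mathcal{R}_{\eta_n}(x)-\mathcal{R}_{\eta}(x)=(\eta-\eta_n)\|x\|_{\ell_2}$ on the uniformly bounded sequence is exactly the detail needed to make the fixed-parameter lemmas applicable, and it is handled correctly.
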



\subsection{The case $\alpha=\beta>0$}

We turn to the case $\alpha=\beta>0$. The functional $\mathcal{R}_{\alpha,\beta}(x)$ remains to be weakly lower semi-continuous, see \cite[Lemma 2.8, Remark 2.9]{DH19} for details. However, coercivity and Radon-Riesz property cannot be extended to the case $\alpha=\beta$, see Examples \ref{ex2.9} and \ref{ex2.12} below.

\begin{example}{\rm (Non-coercivity)}\label{ex2.9}
Let $x=(x_1, x_2, \cdots, x_n, 0, 0, 0, \cdots)$. If $x_1\rightarrow \infty$ and $\sum\limits_{i=2}^{n}|x_i|$ is bounded, then
$\|x\|_{\ell_2}\rightarrow\infty$. We have
\begin{align*}
\displaystyle \mathcal{R}_{\alpha,\alpha}(x)&= \alpha\|x\|_{\ell_1}-\alpha\| x\|_{\ell_2}=\alpha\frac{\|x\|_{\ell_1}^2-\| x\|_{\ell_2}^2}{\|x\|_{\ell_1}+\| x\|_{\ell_2}}\leq\alpha\frac{|x_1|\left(\sum\limits_{i=2}^{n}|x_i|\right)+|x_2|\left(\sum\limits_{i=3}^{n}|x_i|\right)+\cdots+|x_{n-1}||x_{n}|}{2|x_1|+|x_2|+\cdots+|x_n|}.
\end{align*}
Thus, $\limsup_{x_1\rightarrow \infty}\mathcal{R}_{\alpha,\alpha}(x)\leq\frac{\alpha}{2}\sum\limits_{i=2}^{n}|x_i|$. So $\mathcal{R}_{\alpha,\alpha}(x)$ is not coercive.
\end{example}

Note that the standard proof of the well-posedness of Tikhonov regularization is invalid without the coercivity of $\mathcal{R}_{\alpha,\alpha}(x)$. So to ensure the well-posedness of the problem \eqref{equ1_2} in the case $\alpha=\beta$, we provide a result next where an additional restriction, i.e. coercivity  is imposed on the nonlinear operator $F$, see \cite{AZ00,IK14} for some examples of the nonlinear (or linear) coercive operator.

\begin{lemma}\label{lemmaadd1} Assume $F(x)$ is coercive with respect to $\|x\|_{\ell_2}$, i.e. $\|x\|_{\ell_2}\rightarrow \infty$ implies $\|F(x)\|_Y\rightarrow \infty$. Then the functional
$\mathcal{J}_{\alpha,\alpha}^{\delta}(x)$ is coercive.
\end{lemma}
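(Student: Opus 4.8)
The plan is to exploit the fact that, although $\mathcal{R}_{\alpha,\alpha}$ itself fails to be coercive (Example \ref{ex2.9}), it is nevertheless bounded below; hence the coercivity of the whole functional $\mathcal{J}_{\alpha,\alpha}^{\delta}$ can be inherited entirely from the data-fidelity term once $F$ is assumed coercive. In other words, the strategy is to let the hypothesis on $F$ do all the work and to use only the nonnegativity of the regularizer.

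First I would record the elementary inequality $\|x\|_{\ell_2}\le\|x\|_{\ell_1}$, valid for every $x\in\ell_2$, since $\|x\|_{\ell_2}^2=\sum_i x_i^2\le(\sum_i|x_i|)^2=\|x\|_{\ell_1}^2$. Consequently
\[
\mathcal{R}_{\alpha,\alpha}(x)=\alpha\bigl(\|x\|_{\ell_1}-\|x\|_{\ell_2}\bigr)\ge 0\qquad\text{for all }x\in\ell_2,
\]
so the regularization term never pulls the functional downward.

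Next I would take an arbitrary sequence $\{x_n\}\subset\ell_2$ with $\|x_n\|_{\ell_2}\to\infty$. The coercivity hypothesis on $F$ then gives $\|F(x_n)\|_Y\to\infty$. By the reverse triangle inequality, $\|F(x_n)-y^{\delta}\|_Y\ge\|F(x_n)\|_Y-\|y^{\delta}\|_Y\to\infty$, because $\|y^{\delta}\|_Y$ is a fixed constant. Since $q\ge 1$, the map $t\mapsto t^q$ is increasing and unbounded on $[0,\infty)$, whence $\tfrac1q\|F(x_n)-y^{\delta}\|_Y^q\to\infty$. Combining this with $\mathcal{R}_{\alpha,\alpha}(x_n)\ge 0$ yields $\mathcal{J}_{\alpha,\alpha}^{\delta}(x_n)\to\infty$, which is precisely the asserted coercivity.

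The argument is short precisely because the difficulty has been displaced into the hypothesis: the genuine obstacle, namely the loss of coercivity of $\mathcal{R}_{\alpha,\alpha}$ exhibited in Example \ref{ex2.9}, is circumvented by imposing coercivity directly on $F$. The only point needing any care is the nonnegativity of the regularizer; everything else is a routine transfer of coercivity through the reverse triangle inequality and the monotonicity of $t\mapsto t^q$. With coercivity of $\mathcal{J}_{\alpha,\alpha}^{\delta}$ in hand, the standard Tikhonov existence argument — otherwise invalidated by Example \ref{ex2.9} — can be run in the case $\alpha=\beta$.
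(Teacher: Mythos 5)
Your proof is correct and follows essentially the same route as the paper: both arguments rest on the nonnegativity of $\mathcal{R}_{\alpha,\alpha}$ (via $\|x\|_{\ell_1}\ge\|x\|_{\ell_2}$) together with the reverse triangle inequality, so that the coercivity of $F$ transfers directly to $\mathcal{J}_{\alpha,\alpha}^{\delta}$. Your write-up merely makes explicit the sequence-based formulation and the monotonicity of $t\mapsto t^q$, which the paper leaves implicit.
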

\begin{proof}
By the definition of $\mathcal{J}_{\alpha,\beta}^{\delta}(x)$,
\[  \mathcal{J}_{\alpha,\alpha}^{\delta}(x)= \frac{1}{q}\| F(x)-y^{\delta}\|_Y^{q}+ \alpha\|x\|_{\ell_1}-\alpha\| x\|_{\ell_2}\geq\frac{1}{q}\big|\| F(x)\|_Y-\|y^{\delta}\|_Y\big|^{q}.\]
Since $F(x)$ is coercive, it is obvious that $\mathcal{J}_{\alpha,\alpha}^{\delta}(x)\rightarrow \infty$ as $\|x\|_{\ell_2}\rightarrow \infty$.
\end{proof}

Based on Lemma \ref{lemmaadd1}, we can demonstrate the existence of the regularized solution; the proof is similar to that in Theorem 2.11 in \cite{DH19}. Next we give an example to show that $x_n$ does not necessarily converge strongly to $x$ even if $x_n\rightharpoonup x$ in $\ell_2$ and $\mathcal{R}_{\alpha,\alpha}(x_n)\rightarrow \mathcal{R}_{\alpha,\alpha}(x)$. Thus $\mathcal{R}_{\alpha,\alpha}(x)$ fails to satisfy the Radon-Riesz property.

\begin{example}{\rm (Non-Radon-Riesz property)}\label{ex2.12}
Let $x_n=(\underbrace{0,\cdots,0,1}_n,0,\cdots)$ and $x=0$, then $x_n\rightharpoonup x$ in $\ell_2$. We have
\[ \mathcal{R}_{\alpha,\alpha}(x_n)=\alpha(\|x_n\|_{\ell_1}-\|x_n\|_{\ell_2})=0 \quad and \quad
 \mathcal{R}_{\alpha,\alpha}(x)=0. \]
So $\mathcal{R}_{\alpha,\alpha}(x_n)\rightarrow \mathcal{R}_{\alpha,\alpha}(x)$. However, $\|x_n-x\|_{\ell_2}=1$, which implies that $x_n$ does not converge strongly to $x$.
\end{example}

Since $\mathcal{R}_{\alpha,\alpha}(x)$ fails to satisfy the Radon-Riesz property, the standard proof of the well-posedness can not ensure the strong convergence. Without the Radon-Riesz property, we may expect to have only weak convergence
in stability and convergence properties of the regularized solution. With Lemma \ref{lemmaadd1}, the proof of stability
and convergence is similar to that of Theorems 2.12 and 2.13 in \cite{DH19}.


\subsection{Sparsity}

Next we turn to a discussion of the sparsity of the regularization solution. Under a restriction on the nonlinearity of $F$, it can be shown that every minimizer of $\mathcal{J}_{\alpha,\beta}^{\delta}(x)$ is sparse whenever $\alpha>\beta$ or $\alpha=\beta$.

\begin{proposition}\label{lemmasparsity} {\rm (Sparsity)}
Let $x$ be a minimizer of $\mathcal{J}_{\alpha,\beta}^{\delta}(x)$ $(\alpha\geq\beta\geq0)$. Assume that $F$ has a continuous Fr\'{e}chet derivative and there exists a constant $\gamma>0$ such that
\begin{equation}\label{equ2_9}
\displaystyle \|F'(y)-F'(x)\|_{L(\ell_2, Y)}\leq \gamma\|y-x\|_{\ell_2}
 \end{equation}
for any $y\in B_{\delta}(x)$, where $B_{\delta}(x):=\{y \mid \|y-x\|_{\ell_2}\leq \delta\}$, $\delta\geq\|x\|_{\infty}$. Then $x$ is sparse.
\end{proposition}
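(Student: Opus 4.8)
The plan is to prove that the support $I(x)=\{i\in\mathbb N:x_i\neq 0\}$ is finite by a one-component truncation argument that plays minimality off against the Lipschitz hypothesis \eqref{equ2_9}. If $x=0$ the claim is trivial, so assume $x\neq0$; note also that we must have $\alpha>0$, since $\alpha=\beta=0$ leaves no regularization and no reason for sparsity. For each $i$ with $x_i\neq0$ I would compare $x$ with its truncation $\tilde x^{(i)}:=x-x_ie_i$. The hypothesis $\delta\geq\|x\|_\infty$ is used precisely here: it guarantees $\|\tilde x^{(i)}-x\|_{\ell_2}=|x_i|\leq\|x\|_\infty\leq\delta$, so that the whole segment from $x$ to $\tilde x^{(i)}$ lies in $B_\delta(x)$ and \eqref{equ2_9} is available along it.

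First I would record the two elementary identities
\[
\|x\|_{\ell_1}-\|\tilde x^{(i)}\|_{\ell_1}=|x_i|,\qquad
\|x\|_{\ell_2}-\|\tilde x^{(i)}\|_{\ell_2}=\frac{|x_i|^2}{\|x\|_{\ell_2}+\|\tilde x^{(i)}\|_{\ell_2}}.
\]
Since $x$ minimizes $\mathcal J_{\alpha,\beta}^\delta$, we have $\mathcal J_{\alpha,\beta}^\delta(x)\leq\mathcal J_{\alpha,\beta}^\delta(\tilde x^{(i)})$, which rearranges into
\[
\alpha|x_i|-\beta\frac{|x_i|^2}{\|x\|_{\ell_2}+\|\tilde x^{(i)}\|_{\ell_2}}
\leq\frac{1}{q}\|F(\tilde x^{(i)})-y^\delta\|_Y^q-\frac{1}{q}\|F(x)-y^\delta\|_Y^q.
\]

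Next I would estimate the data-misfit difference on the right. Setting $r:=F(x)-y^\delta$ and writing \eqref{equ2_9} in mean-value form gives $F(\tilde x^{(i)})=F(x)-x_iF'(x)e_i+\rho_i$ with $\|\rho_i\|_Y\leq\tfrac{\gamma}{2}|x_i|^2$. Expanding $\tfrac1q\|\cdot\|_Y^q$ to first order then bounds the right-hand side by $|x_i|\,|g_i|+C|x_i|^2$, where $g_i:=\|r\|_Y^{q-2}\langle r,F'(x)e_i\rangle$ is the $i$-th component of the misfit gradient $g:=\|r\|_Y^{q-2}F'(x)^*r$, and $C$ depends only on $\gamma$, $\|F'(x)\|_{L(\ell_2,Y)}$, $\|r\|_Y$ and $\delta$. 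The structural heart of the proof is that $g\in\ell_2$: since $F'(x)^*\colon Y\to\ell_2$ is bounded, $F'(x)^*r\in\ell_2$, and hence $g_i\to0$ as $i\to\infty$.

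Finally I would divide the combined inequality by $|x_i|>0$ to get
\[
\alpha\leq|g_i|+C|x_i|+\beta\frac{|x_i|}{\|x\|_{\ell_2}}.
\]
If $I(x)$ were infinite, then along its indices $|x_i|\to0$ (as $x\in\ell_2$) and $|g_i|\to0$ (as $g\in\ell_2$), while $\|x\|_{\ell_2}>0$ is fixed; hence the right-hand side tends to $0$ and the fixed positive constant $\alpha$ satisfies $\alpha\leq0$, a contradiction. Thus $I(x)$ is finite and $x$ is sparse, and since no use was made of whether $\alpha>\beta$ or $\alpha=\beta$, the argument covers both cases at once. I expect the main obstacle to be the clean control of the misfit difference: one must retain the first-order term exactly, so as to expose the $\ell_2$ sequence $g$ whose decay drives the contradiction, while bounding the remainder by $O(|x_i|^2)$ through \eqref{equ2_9}. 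The degenerate situations $q=1$ and the perfect-fit case $F(x)=y^\delta$ (where the displayed formula for $g$ breaks down) should be handled separately; for $q>1$ the perfect-fit case is in fact benign, since then the misfit difference is $O(|x_i|^q)$ and the same division-by-$|x_i|$ argument still forces $\alpha\leq0$.
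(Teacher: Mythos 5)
Your proposal is correct and follows essentially the same route as the paper's proof: compare $x$ with the one-component truncation $x-x_ie_i$, use minimality together with the Taylor expansion afforded by \eqref{equ2_9} to bound the resulting difference by $|x_i|\,|g_i|+C|x_i|^2$ with $g=F'(x)^{*}(F(x)-y^{\delta})\in\ell_2$, and conclude from the decay of $g_i$ and $x_i$. The only differences are cosmetic or beyond the paper's scope: you finish by dividing by $|x_i|$ and deriving a contradiction with $\alpha>0$ rather than the paper's construction of the finite index set $\Lambda=\{i:|K_i|\ge 1\}$, and you sketch general $q$ (correctly flagging the degenerate cases $q=1$ and $F(x)=y^{\delta}$) whereas the paper restricts to $q=2$.
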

\begin{proof}
For simplicity,
we only discuss the case $q=2$. For $i\in \mathbb{N}$, consider $\bar{x}:=x-x_ie_i$, where $x_{i}$ is the $i^{\rm th}$ component of $x$. It is clear that $\bar{x}\in B_{\delta}(x)$. By the definition of $x$,
\begin{equation}\label{sparsity0}
\displaystyle \frac{1}{2}\| F(x)-y^{\delta}\|_Y^{2}+\mathcal{R}_{\alpha,\beta}(x)
\leq \frac{1}{2}\| F(\bar{x})-y^{\delta}\|_Y^{2}+\mathcal{R}_{\alpha,\beta}(\bar{x}).
\end{equation}
If $x=0$, then $x$ is sparse. Suppose $x\neq0$. By (\ref{sparsity0}), we see that
\begin{align}
\alpha|x_i|-\beta \frac{|x_i|^2}{\|x\|_{\ell_2}+\|\bar{x}\|_{\ell_2}}
&=\mathcal{R}_{\alpha,\beta}(x)-\mathcal{R}_{\alpha,\beta}(\bar{x})\nonumber\\
&\leq \frac{1}{2}\| F(\bar{x})-y^{\delta}\|_Y^{2}-\frac{1}{2}\| F(x)-y^{\delta}\|_Y^{2}\nonumber\\
&= \frac{1}{2}\| F(\bar{x})-F(x)\|_Y^{2}+\langle F(x)-y^{\delta}, F(\bar{x})-F(x)\rangle. \label{sparsity1}
\end{align}
Note that (\ref{equ2_9}) implies that
\begin{equation}\label{sparsity2}
F(\bar{x})=F(x)+F'(x)(\bar{x}-x)+r_{\alpha}^{\delta}
\end{equation}
with
\begin{equation}\label{sparsity3}
\|r_{\alpha}^{\delta}\|_Y\leq \frac{\gamma}{2}\|\bar{x}-x\|_{\ell_2}^2,
\end{equation}
see \cite[p.\ 14]{JM12} for a proof of this result. A combination of (\ref{sparsity2}) and (\ref{sparsity3}) implies that
\begin{align}
\|F(\bar{x})-F(x)\|_Y^2 & = \|F'(x)(\bar{x}-x)\|_Y^2+\|r_{\alpha}^{\delta}\|_Y^2+2\langle F'(x)(\bar{x}-x),r_{\alpha}^{\delta} \rangle\nonumber\\
& \leq \|F'(x)\|_{L(\ell_2,Y)}^2\|\bar{x}-x\|_{\ell_2}^2+\frac{\gamma^2}{4}\|\bar{x}-x\|_{\ell_2}^4+\gamma\|F'(x)\|_{L(\ell_2,Y)}\|\bar{x}-x\|_{\ell_2}^3\nonumber\\
& =|x_i|^2\|F'(x)\|_{L(\ell_2,Y)}^2+\frac{\gamma^2}{4}|x_i|^4+\gamma|x_i|^3\|F'(x)\|_{L(\ell_2,Y)}.\label{sparsity4}
\end{align}
Moreover,
\begin{align}
\langle F(x)-y^{\delta}, F(\bar{x})-F(x)\rangle &= \langle F(x)-y^{\delta}, F'(x)(\bar{x}-x)+r_{\alpha}^{\delta}\rangle\nonumber\\
& \leq -x_i\langle F'(x)^{*}(F(x)-y^{\delta}), e_i\rangle +\frac{\gamma}{2}|x_i|^2\|F(x)-y^{\delta}\|_Y\label{sparsity5}.
\end{align}
A combination of (\ref{sparsity1}), (\ref{sparsity4}) and (\ref{sparsity5}) implies that
\begin{align}
\alpha|x_i|-\beta \frac{|x_i|^2}{\|x\|_{\ell_2}+\|\bar{x}\|_{\ell_2}}\leq &\frac{1}{2}|x_i|^2\|F'(x)\|_{L(\ell_2,Y)}^2+\frac{\gamma^2}{8}|x_i|^4+\frac{1}{2}\gamma|x_i|^3\|F'(x)\|_{L(\ell_2,Y)}\nonumber\\
&-x_i\langle F'(x)^{*}(F(x)-y^{\delta}), e_i\rangle +\frac{\gamma}{2}|x_i|^2\|F(x)-y^{\delta}\|_Y\label{sparsity6}
\end{align}
for every $i\in \mathbb{N}$.
Now if $\|x\|_0=1$, then $x$ is sparse. Otherwise, $\|x\|_0\geq2$ and then $\frac{|x_i|}{\|x\|_{\ell_2}
+\|\bar{x}\|_{\ell_2}}< 1$. Thus, there exists a constant $c>0$ such that
\begin{equation}\label{addeq1}
\frac{c+\eta|x_i|}{\|x\|_{\ell_2}
+\|\bar{x}\|_{\ell_2}}\leq 1, \quad {\rm {i.e.}}  \quad \frac{c}{\|x\|_{\ell_2}
+\|\bar{x}\|_{\ell_2}}\leq 1-\frac{\eta|x_i|}{\|x\|_{\ell_2}
+\|\bar{x}\|_{\ell_2}}.
\end{equation}
Multiplying $\alpha|x_i|$ to \eqref{addeq1}, we have
\begin{equation}\label{sparsity7}
\alpha c\frac{|x_i|}{\|x\|_{\ell_2}
+\|\bar{x}\|_{\ell_2}}\leq \alpha|x_i|-\beta \frac{|x_i|^2}{\|x\|_{\ell_2}
+\|\bar{x}\|_{\ell_2}}.
\end{equation}
Denote
\begin{align*}
\displaystyle K_i:=&\frac{(\|x\|_{\ell_2}
+\|\bar{x}\|_{\ell_2})\left(\frac{1}{2}x_i\|F'(x)\|_{L(\ell_2,Y)}^2+\frac{\gamma^2}{8}x_i^3+\frac{1}{2}\gamma x_i|x_i|\|F'(x)\|_{L(\ell_2,Y)}\right)}{c\alpha}\\
&+\displaystyle \frac{(\|x\|_{\ell_2}
+\|\bar{x}\|_{\ell_2})\left(-\langle F'(x)^{*}(F(x)-y^{\delta}), e_i\rangle +\frac{\gamma}{2}x_i\|F(x)-y^{\delta}\|_Y\right)}{c\alpha}.
\end{align*}
Then a combination of (\ref{sparsity6}) and (\ref{sparsity7}) implies that
\[ K_ix_i\geq|x_i|,\quad i\in \mathbb{N}.\]
By \eqref{sparsity0}, $\|F(x)\|_Y$ is finite. In addition, Lipschitz continuity of $F'$ on $B_{\delta}(x)$ implies that $\|F'(x)\|_{L(\ell_2,Y)}$ is finite.
Since $x$, $\bar{x}\in \ell_2$, $F'(x)^{*}(F(x)-y^{\delta})\in \ell_2$, we have $K_i\to0$ as $i\to\infty$, and this
implies that $\Lambda:=\{i\in \mathbb{N}\mid|K_i|\geq 1\}$ is finite. It is obvious that $x_i=0$ whenever
$i\notin \Lambda$.  This proves the proposition.
\end{proof}


\section{Convergence rate of the regularized solutions}\label{sec3}

We consider convergence rate for the case $\alpha>\beta\geq0$ in this section. For this purpose, we need to impose a restriction on the smoothness of $x^{\dag}$. Meanwhile, we impose two commonly adopted conditions on the nonlinearity of $F$, and derive two corresponding inequalities. Then we get convergence rates $O(\delta^{\frac{1}{2}})$ and $O(\delta)$ in the $\ell_2$-norm based on the two inequalities, respectively.


\subsection{Convergence rate $O(\delta^{\frac{1}{2}})$}

\begin{assumption}\label{assumption2}
Let $x^{\dagger}\neq 0$ be an
$\mathcal R_{\eta}$-minimizing solution of the problem \eqref{equ1_1} that is sparse. Assume that

\rm {(i)}\quad $F$ is continuously Fr\'{e}chet differentiable. For every $i\in I(x^{\dag})$, there exists
$\omega_i\in D(F'(x^{\dag})^{*})$ such that
\begin{equation}\label{scondition1}
e_i=F'(x^{\dag})^{*}\omega_i,
\end{equation}
where $I(x^{\dag})$ is defined in \eqref{equ2_1a}.

\rm {(ii)}\quad  There exists $\gamma> 0$ such that
\begin{equation}\label{Lipschitz}
\|F'(x)-F'(x^{\dag})\|_{L(\ell_2, Y)}\leq \gamma\|x-x^{\dag}\|_{\ell_2}
\end{equation}
for any $x\in\ell_2$ in a sufficiently large ball around $x^{\dag}$.
\end{assumption}

Assumption \ref{assumption2} (i) and other analogous conditions were introduced in \cite{BL09,G09}. Actually, Assumption \ref{assumption2} (i) is a source condition which imposes the smoothness on the solution $x^{\dag}$. Assumption \ref{assumption2} (ii) is a restriction on $F$ which has two-fold meaning. One is to impose nonlinearity condition on $F$. Another more crucial effect is to estimate the term $\langle F'(x^{\dag})(x-x^{\dag}),\omega_i\rangle$, where $\omega_i$ are the same as that in (\ref{scondition1}). Many authors pointed out that the restrictions on the nonlinearity of $F$ coupled with source conditions prove to be a powerful tool to obtain convergence rates in regularization (\cite{GHS08,HKPS07,SKHK2012}). There are several ways to choose the restrictions on the nonlinearity of $F$. A commonly adopted restriction is (\ref{Lipschitz}), i.e. $F'$ is Lipschitz continuous (\cite{EHN1996,JM12}).

\begin{remark}\label{remark2}
Note that \eqref{Lipschitz} implies
\begin{equation}\label{Taylor}
\|F(x)-F(x^{\dag})-F'(x^{\dag})(x-x^{\dag})\|_{Y}\leq \frac{\gamma}{2}\|x-x^{\dag}\|_{\ell_2}^2
\end{equation}
from a Taylor approximation of $F$. Thus, with the triangle inequality, we obtain
\begin{equation}\label{equ2_21}
\|F'(x^{\dag})(x-x^{\dag})\|_Y\leq \frac{\gamma}{2}\|x-x^{\dag}\|_{\ell_2}^2+\|F(x)-F(x^{\dag})\|_Y
\end{equation}
which can be used to give an upper bound of the term $\langle F'(x^{\dag})(x-x^{\dag}),\omega_i\rangle$.
\end{remark}

\begin{lemma}\label{lemmaadd11} If Assumption \ref{assumption2} holds, then there exists $\displaystyle\omega^{\dag}\in Y$ such that $\displaystyle x^{\dag}=F'(x^{\dag})^*\omega^{\dag}$.
\end{lemma}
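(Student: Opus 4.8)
The plan is to exploit the sparsity of $x^{\dag}$ to pass from the coordinatewise source condition of Assumption \ref{assumption2}(i) to a single source element for the whole vector. Since $x^{\dag}$ is sparse, the index set $I(x^{\dag})$ is finite, so $x^{\dag}$ admits the finite expansion $x^{\dag}=\sum_{i\in I(x^{\dag})}x_i^{\dag}e_i$, where $x_i^{\dag}$ are the nonzero components. Only part (i) of Assumption \ref{assumption2} is needed; the Lipschitz condition (ii) plays no role in this particular lemma.

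First I would invoke Assumption \ref{assumption2}(i): for each $i\in I(x^{\dag})$ there is $\omega_i\in D(F'(x^{\dag})^*)$ with $e_i=F'(x^{\dag})^*\omega_i$. Substituting this into the expansion gives $x^{\dag}=\sum_{i\in I(x^{\dag})}x_i^{\dag}F'(x^{\dag})^*\omega_i$.

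Next I would define $\omega^{\dag}:=\sum_{i\in I(x^{\dag})}x_i^{\dag}\omega_i$. Because $I(x^{\dag})$ is finite, this is a finite linear combination of elements of $Y$, hence $\omega^{\dag}\in Y$; moreover, since the domain $D(F'(x^{\dag})^*)$ of the adjoint is a linear subspace, it is closed under finite linear combinations, so $\omega^{\dag}\in D(F'(x^{\dag})^*)$ and the expression $F'(x^{\dag})^*\omega^{\dag}$ is well defined. Using the linearity of the adjoint, $F'(x^{\dag})^*\omega^{\dag}=\sum_{i\in I(x^{\dag})}x_i^{\dag}F'(x^{\dag})^*\omega_i=x^{\dag}$, which is exactly the assertion.

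The only point requiring care—there is no real obstacle here—is the finiteness of $I(x^{\dag})$: it is precisely the sparsity of $x^{\dag}$ that allows the passage from the coordinatewise condition to a single $\omega^{\dag}$ with no convergence argument, and that guarantees membership in the domain of the possibly unbounded adjoint. If $x^{\dag}$ were merely in $\ell_2$ rather than sparse, the series defining $\omega^{\dag}$ could fail to converge in $Y$ or to lie in $D(F'(x^{\dag})^*)$, so the conclusion genuinely rests on the sparsity hypothesis.
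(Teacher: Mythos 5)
Your proposal is correct and coincides with the paper's argument, which likewise sets $\omega^{\dag}=\sum_{i\in I(x^{\dag})}x_i^{\dag}\omega_i$ and relies on the finiteness of $I(x^{\dag})$ together with the linearity of the adjoint. You simply spell out the details that the paper leaves as "verified easily."
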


This result is verified easily by setting $\omega^{\dag}=\sum\limits_{i\in I(x^{\dag})}x_i^{\dag}\omega_i$.

Next we derive an inequality needed in the proof of the convergence rate. By Lemma \ref{lemma2_0} (i), for any $M>0$, there exists $M_1>0$ such that $ \mathcal R_{\alpha,\beta}(x)\leq M$ for $x\in \ell_2$ implies $\|x\|_{\ell_{2}}\leq M_1$. We further denote
\begin{align}\label{equadd4}
c_1&=M_1+\|x^{\dag}\|_{\ell_{2}},\\
c_2&=\left(1+\frac{c_{1}}{\|x^{\dag}\|_{\ell_{2}}}\right)|I(x^{\dag})|\max\limits_{i\in I(x^{\dag})}\|\omega_{i}\|_Y+\frac{2\|\omega^{\dag}\|_Y}{\|x^{\dag}\|_{\ell_{2}}},\\
c_3&=\frac{2\|\omega^{\dag}\|_Y}{\|x^{\dag}\|_{\ell_{2}}}.
\end{align}

\begin{lemma}\label{lemma4} Let $M>0$ be given and define $c_1$, $c_2$ and $c_3$ by (3.5)--(3.7). Under Assumption \ref{assumption2}, if $\Gamma:=\frac{\gamma c_1(c_2\alpha-c_3\beta)}{2(\alpha-\beta)}<1$, then
\begin{displaymath}
 \begin{array}{llc}
\displaystyle\|x-x^{\dag}\|^{2}_{\ell_{2}}\leq \frac{1}{\left(1-\Gamma\right)}\left[\frac{c_{1}}{\alpha-\beta}(\mathcal{R}_{\alpha,\beta}(x)-\mathcal{R}_{\alpha,\beta}(x^{\dag}))\displaystyle +\frac{c_{1}}{\alpha-\beta}(c_{2}\alpha-c_{3}\beta)\|F(x)-F(x^{\dag})\|_Y\right]
 \end{array}
\end{displaymath}
for any $x\in\ell_2$ with $ \mathcal R_{\alpha,\beta}(x)\leq M$.
\end{lemma}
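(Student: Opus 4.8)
The plan is to first prove the auxiliary ``pre-Taylor'' estimate
\[
\frac{\alpha-\beta}{c_1}\,\|x-x^{\dag}\|_{\ell_2}^2 \le \big(\mathcal{R}_{\alpha,\beta}(x)-\mathcal{R}_{\alpha,\beta}(x^{\dag})\big) + (c_2\alpha-c_3\beta)\,\|F'(x^{\dag})(x-x^{\dag})\|_Y,
\]
in which the linearized residual $\|F'(x^{\dag})(x-x^{\dag})\|_Y$ appears instead of $\|F(x)-F(x^{\dag})\|_Y$, and then to insert the Taylor bound \eqref{equ2_21}, $\|F'(x^{\dag})(x-x^{\dag})\|_Y \le \frac{\gamma}{2}\|x-x^{\dag}\|_{\ell_2}^2 + \|F(x)-F(x^{\dag})\|_Y$. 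After this substitution, dividing the whole inequality by $(\alpha-\beta)/c_1$ turns the remainder $\frac{\gamma}{2}\|x-x^{\dag}\|_{\ell_2}^2$ into $\Gamma\|x-x^{\dag}\|_{\ell_2}^2$ on the right, with $\Gamma=\frac{\gamma c_1(c_2\alpha-c_3\beta)}{2(\alpha-\beta)}$; moving it to the left leaves the factor $(1-\Gamma)$ in front of $\|x-x^{\dag}\|_{\ell_2}^2$, and the hypothesis $\Gamma<1$ lets me divide by $1-\Gamma>0$ to reach the stated inequality. Since the only property of $F$ used in this second phase is \eqref{equ2_21}, $\Gamma$ depends on the nonlinearity solely through $\gamma$.

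The heart of the argument is the pre-Taylor estimate, and the main difficulty is that $\mathcal{R}_{\alpha,\beta}$ is non-convex, so no Bregman-type inequality is available for the $-\beta\|\cdot\|_{\ell_2}$ part. I would handle this through the algebraic identity
\[
\|x\|_{\ell_2}-\|x^{\dag}\|_{\ell_2} = \frac{\|x\|_{\ell_2}^2-\|x^{\dag}\|_{\ell_2}^2}{\|x\|_{\ell_2}+\|x^{\dag}\|_{\ell_2}} = \frac{2\langle x^{\dag},x-x^{\dag}\rangle+\|x-x^{\dag}\|_{\ell_2}^2}{\|x\|_{\ell_2}+\|x^{\dag}\|_{\ell_2}},
\]
which serves two purposes. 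First, since $\mathcal{R}_{\alpha,\beta}(x)\le M$ forces $\|x\|_{\ell_2}+\|x^{\dag}\|_{\ell_2}\le c_1$ by the coercivity in Lemma \ref{lemma2_0}(i), it yields
\[
\frac{\|x-x^{\dag}\|_{\ell_2}^2}{c_1}\le (\|x\|_{\ell_2}-\|x^{\dag}\|_{\ell_2}) - \frac{2\langle x^{\dag},x-x^{\dag}\rangle}{\|x\|_{\ell_2}+\|x^{\dag}\|_{\ell_2}},
\]
which is the origin of the left-hand side and of the factor $c_1/(\alpha-\beta)$. Second, using $x^{\dag}=F'(x^{\dag})^*\omega^{\dag}$ from Lemma \ref{lemmaadd11} gives $|\langle x^{\dag},x-x^{\dag}\rangle| = |\langle\omega^{\dag},F'(x^{\dag})(x-x^{\dag})\rangle| \le \|\omega^{\dag}\|_Y\|F'(x^{\dag})(x-x^{\dag})\|_Y$, and bounding the denominator below by $\|x^{\dag}\|_{\ell_2}$ turns the cross term into a contribution bounded by $c_3\,\|F'(x^{\dag})(x-x^{\dag})\|_Y$. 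Multiplying through by $\alpha-\beta\ge 0$ produces the term $(\alpha-\beta)c_3$ of the final coefficient.

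It then remains to control $(\alpha-\beta)(\|x\|_{\ell_2}-\|x^{\dag}\|_{\ell_2})$ by $\mathcal{R}_{\alpha,\beta}(x)-\mathcal{R}_{\alpha,\beta}(x^{\dag})$. I would split $\mathcal{R}_{\alpha,\beta}=(\alpha-\beta)\|\cdot\|_{\ell_1}+\beta(\|\cdot\|_{\ell_1}-\|\cdot\|_{\ell_2})$, so that, writing $g(x):=\|x\|_{\ell_1}-\|x\|_{\ell_2}\ge 0$, one has the exact identity $(\alpha-\beta)(\|x\|_{\ell_2}-\|x^{\dag}\|_{\ell_2}) = \big(\mathcal{R}_{\alpha,\beta}(x)-\mathcal{R}_{\alpha,\beta}(x^{\dag})\big) - \alpha\big(g(x)-g(x^{\dag})\big)$. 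I bound $g(x)-g(x^{\dag})$ from below by combining the $\ell_1$ estimate $\|x\|_{\ell_1}-\|x^{\dag}\|_{\ell_1}\ge \|x-x^{\dag}\|_{\ell_1}-2\sum_{i\in I(x^{\dag})}|x_i-x_i^{\dag}|$ (valid because $x^{\dag}$ vanishes off $I(x^{\dag})$) with $\|x\|_{\ell_2}-\|x^{\dag}\|_{\ell_2}\le\|x-x^{\dag}\|_{\ell_2}\le\|x-x^{\dag}\|_{\ell_1}$, giving $g(x)-g(x^{\dag})\ge -2\sum_{i\in I(x^{\dag})}|x_i-x_i^{\dag}|$. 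Finally the source condition $e_i=F'(x^{\dag})^*\omega_i$ turns $|x_i-x_i^{\dag}|=|\langle\omega_i,F'(x^{\dag})(x-x^{\dag})\rangle|$ into a multiple of $\|F'(x^{\dag})(x-x^{\dag})\|_Y$, so $\sum_{i\in I(x^{\dag})}|x_i-x_i^{\dag}|\le |I(x^{\dag})|\max_{i}\|\omega_i\|_Y\,\|F'(x^{\dag})(x-x^{\dag})\|_Y$; using $c_1/\|x^{\dag}\|_{\ell_2}\ge 1$ to replace the factor $2$ by $1+c_1/\|x^{\dag}\|_{\ell_2}$ yields the coefficient $\alpha(c_2-c_3)$, and adding the $(\alpha-\beta)c_3$ from the previous step gives the total coefficient $\alpha(c_2-c_3)+(\alpha-\beta)c_3=c_2\alpha-c_3\beta$, completing the pre-Taylor estimate.

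I expect the main obstacle to be the orientation and bookkeeping of the estimates rather than any single hard step: because of the non-convex term, every inequality must point the right way (for instance $\langle x^{\dag},x-x^{\dag}\rangle$ may be negative, and the denominator $\|x\|_{\ell_2}+\|x^{\dag}\|_{\ell_2}$ has to be bounded above by $c_1$ in one place and below by $\|x^{\dag}\|_{\ell_2}$ in another), and I must ensure that after the Taylor step the \emph{only} source of a $\|x-x^{\dag}\|_{\ell_2}^2$ term on the right is the remainder $\frac{\gamma}{2}\|x-x^{\dag}\|_{\ell_2}^2$, so that the absorbing factor is exactly $\Gamma$ and the condition $\Gamma<1$ is precisely what the argument requires.
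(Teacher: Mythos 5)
Your argument is correct and follows essentially the same route as the paper: the identity $\mathcal{R}_{\alpha,\beta}=\alpha(\|\cdot\|_{\ell_1}-\|\cdot\|_{\ell_2})+(\alpha-\beta)\|\cdot\|_{\ell_2}$, the divided-difference expansion of $\|x\|_{\ell_2}-\|x^{\dag}\|_{\ell_2}$ producing the $\|x-x^{\dag}\|_{\ell_2}^2$ term and the cross term handled via $\omega^{\dag}$, the source condition to convert $\sum_{i\in I(x^{\dag})}|x_i-x_i^{\dag}|$ into $\|F'(x^{\dag})(x-x^{\dag})\|_Y$, and finally \eqref{equ2_21} with absorption of $\frac{\gamma}{2}\|x-x^{\dag}\|_{\ell_2}^2$ under $\Gamma<1$. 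The only deviations are organizational (a separate ``pre-Taylor'' stage) and your bound $\mathcal{K}(x)-\mathcal{K}(x^{\dag})\geq-2\sum_{i\in I(x^{\dag})}|x_i-x_i^{\dag}|$ via the global triangle inequality, which is slightly sharper than the paper's factor $1+c_1/\|x^{\dag}\|_{\ell_2}$ obtained by splitting the $\ell_2$-norm over $I(x^{\dag})$ and its complement; you correctly weaken it to recover the stated constants $c_2,c_3$.
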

\begin{proof}
By the definition of $\displaystyle \mathcal R_{\alpha,\beta}(x)$ in (\ref{equ1_3}), it is clear that
\begin{align}
\displaystyle \mathcal R_{\alpha,\beta}(x)&=\alpha\|x\|_{\ell_{1}}-\beta\|x\|_{\ell_{2}}\displaystyle=\alpha(\|x\|_{\ell_{1}}-\|x\|_{\ell_{2}})+(\alpha-\beta)\|x\|_{\ell_{2}}\nonumber\\&\displaystyle=\alpha\mathcal{K}(x)+(\alpha-\beta)\|x\|_{\ell_{2}}\nonumber,
 \end{align}
where $ \mathcal{K}(x):=\|x\|_{\ell_{1}}-\|x\|_{\ell_{2}}$. We see that
\begin{align}
\displaystyle \mathcal{R}_{\alpha,\beta}(x)-\mathcal{R}_{\alpha,\beta}(x^{\dag})&=\alpha[\mathcal{K}(x)-\mathcal{K}(x^{\dag})]+(\alpha-\beta)(\|x\|_{\ell_{2}}-\|x^{\dag}\|_{\ell_{2}})\nonumber\\
&\displaystyle=\alpha[\mathcal{K}(x)-\mathcal{K}(x^{\dag})]+(\alpha-\beta)\frac{\|x\|^{2}_{\ell_{2}}-\|x^{\dag}\|^{2}_{\ell_{2}}}{\|x\|_{\ell_{2}}+\|x^{\dag}\|_{\ell_{2}}}\nonumber\\
&\displaystyle=\alpha[\mathcal{K}(x)-\mathcal{K}(x^{\dag})]+(\alpha-\beta)\frac{\|x-x^{\dag}\|^{2}_{\ell_{2}}+2\langle \displaystyle x^{\dag},x-x^{\dag}\rangle}{\|x\|_{\ell_2}+\|x^{\dag}\|_{\ell_{2}}}.\label{lem14equ1}
 \end{align}
From the definition of $\mathcal{K}(x)$, we have
\begin{align}
\displaystyle \mathcal{K}(x)-\mathcal{K}(x^{\dag})=\|x\|_{\ell_{1}}-\|x\|_{\ell_{2}}-\|x^{\dag}\|_{\ell_{1}}+\|x^{\dag}\|_{\ell_{2}}.\nonumber
\end{align}
With the definition of index set $\displaystyle I(x^{\dag})$ in (\ref{equ2_1a}), we obtain that
\begin{align}
\displaystyle \|x\|_{\ell_{1}}=\sum\limits_{i\in I(x^{\dag})}|x_{i}|+\sum\limits_{i\notin I(x^{\dag})}|x_{i}|,\nonumber
\end{align}
\begin{align}
\displaystyle -\|x\|_{\ell_{2}}\geq -\left(\sum\limits_{i\in I(x^{\dag})}|x_{i}|^{2}\right)^{\frac{1}{2}}-\left(\sum\limits_{i\notin I(x^{\dag})}|x_{i}|^{2}\right)^{\frac{1}{2}}\geq-\left(\sum\limits_{i\in I(x^{\dag})}|x_{i}|^{2}\right)^{\frac{1}{2}}-\sum\limits_{i\notin I(x^{\dag})}|x_{i}|.\nonumber
\end{align}
Then,
\begin{align}
\displaystyle \mathcal{K}(x)-\mathcal{K}(x^{\dag})\geq\sum\limits_{i\in I(x^{\dag})}\left(|x_{i}|-|x^{\dag}_{i}|\right)-\left[\left(\sum\limits_{i\in I(x^{\dag})}|x_{i}|^{2}\right)^{\frac{1}{2}}-\left(\sum\limits_{i\in I(x^{\dag})}|x^{\dag}_{i}|^{2}\right)^{\frac{1}{2}}\right],\nonumber
\end{align}
which is rewritten as
\begin{equation}\label{equ22}
 \begin{array}{llc}
\displaystyle \mathcal{K}(x)-\mathcal{K}(x^{\dag})\geq\sum\limits_{i\in I(x^{\dag})}(|x_{i}|-|x^{\dag}_{i}|)-\frac{\sum\limits_{i\in I(x^{\dag})}(|x_{i}|^2-|x^{\dag}_{i}|^2)}{\left(\sum\limits_{i\in I(x^{\dag})}|x_{i}|^2\right)^{\frac{1}{2}}+\left(\sum\limits_{i\in I(x^{\dag})}|x_{i}^{\dag}|^2\right)^\frac{1}{2}}.
 \end{array}
 \end{equation}
By the definition of $M_1$, we have
\begin{align}
\displaystyle 0<\|x^{\dag}\|_{\ell_{2}}=\left(\sum\limits_{i\in I(x^{\dag})}|x^{\dag}_{i}|^{2}\right)^{\frac{1}{2}}\leq\left(\sum\limits_{i\in I(x^{\dag})} |x_{i}|^{2}\right)^{\frac{1}{2}}+
\left(\sum\limits_{i\in I(x^{\dag})}|x_{i}^{\dag}|^{2}\right)^{\frac{1}{2}}\leq M_{1}+\|x^{\dag}\|_{\ell_{2}}\nonumber
\end{align}
and
\begin{align}
\displaystyle |x_{i}|+|x^{\dag}_{i}|\leq M_{1}+\|x^{\dag}\|_{\ell_{2}}.\nonumber
\end{align}
 Then it follows from (\ref{equ22}) that
\begin{displaymath}
 \begin{array}{llc}
\displaystyle \mathcal{K}(x)-\mathcal{K}(x^{\dag})\geq-\sum\limits_{i\in I(x^{\dag})}|x_{i}-x^{\dag}_{i}|-\frac{M_{1}+\|x^{\dag}\|_{\ell_{2}}}{\|x^{\dag}\|_{\ell_{2}}}\sum\limits_{i\in I(x^{\dag})}|x_{i}-x^{\dag}_{i}|,
 \end{array}
 \end{displaymath}
i.e.
\begin{align}
\displaystyle \mathcal{K}(x)-\mathcal{K}(x^{\dag})\geq-\left(2+\frac{M_1}{\|x^{\dag}\|_{\ell_2}}
\right)\sum\limits_{i\in I(x^{\dag})}|x_{i}-x^{\dag}_{i}|\label{equ23add1}.
 \end{align}
A combination of (\ref{lem14equ1}) and (\ref{equ23add1}) implies that
\begin{align}
\displaystyle(\alpha-\beta)\frac{\|x-x^{\dag}\|^{2}_{\ell_{2}}}{\|x\|_{\ell_{2}}+\|x^{\dag}\|_{\ell_{2}}}
&\leq \mathcal{R}_{\alpha,\beta}(x)-\mathcal{R}_{\alpha,\beta}(x^{\dag})+\alpha\left(2+\frac{M_1}{\|x^{\dag}\|_{\ell_{2}}}\right)\sum\limits_{i\in I(x^{\dag})}|x_{i}-x^{\dag}_{i}|\nonumber\\
& \quad{}-2(\alpha-\beta)\frac{\langle x^{\dag},x-x^{\dag}\rangle}{\|x\|_{\ell_{2}}+\|x^{\dag}\|_{\ell_{2}}}.\label{equadd20}
 \end{align}
Since $\displaystyle\|x\|_{\ell_{2}}\leq M_{1}$, by \eqref{equadd20}, we see that
\begin{align}
\displaystyle\frac{(\alpha-\beta)}{M_{1}+\|x^{\dag}\|_{\ell_{2}}}\|x-x^{\dag}\|^{2}_{\ell_{2}}&\leq \mathcal{R}_{\alpha,\beta}(x)-\mathcal{R}_{\alpha,\beta}(x^{\dag})+\alpha\left(2+\frac{M_1}{\|x^{\dag}\|_{\ell_2}}\right)\sum\limits_{i\in I(x^{\dag})}|x_{i}-x^{\dag}_{i}|\nonumber\\
& \quad{} -2(\alpha-\beta)\frac{\langle x^{\dag},x-x^{\dag}\rangle}{\|x\|_{\ell_{2}}+\|x^{\dag}\|_{\ell_{2}}}.\label{equ38}
 \end{align}
By Assumption \ref{assumption2} (i), we have
\begin{displaymath}
 \begin{array}{llc}
\displaystyle|x_{i}-x^{\dag}_{i}|=|\langle e_{i},x-x^{\dag}\rangle|=|\langle \omega_{i},F'(x^{\dag})(x-x^{\dag})\rangle|\leq\max\limits_{i\in I(x^{\dag})}\|\omega_{i}\|_Y\|F'(x^{\dag})(x-x^{\dag})\|_Y.
 \end{array}
 \end{displaymath}
Hence
\begin{equation}\label{equ40}
 \begin{array}{llc}
\displaystyle \sum\limits_{i\in I(x^{\dag})}|x_{i}-x_{i}^{\dag}|\leq|I(x^{\dag})|\max\limits_{i\in I(x^{\dag})}\|\omega_{i}\|_Y\|F'(x^{\dag})(x-x^{\dag})\|_Y,\\
 \end{array}
 \end{equation}
where $\displaystyle |I(x^{\dag})|$ denotes the size of the index set $\displaystyle I(x^{\dag})$.
On the other hand, by Lemma \ref{lemmaadd11}, we see that
\begin{equation}\label{equ41}
\frac{|\langle x^{\dag},x-x^{\dag}\rangle|}{\|x\|_{\ell_{2}}+\|x^{\dag}\|_{\ell_{2}}}=\frac{|\langle \omega^{\dag},F'(x^{\dag})(x-x^{\dag})\rangle|}{\|x\|_{\ell_{2}}+\|x^{\dag}\|_{\ell_{2}}}\leq\frac{\|\omega^{\dag}\|_Y\|F'(x^{\dag})(x-x^{\dag})\|_Y}{\|x^{\dag}\|_{\ell_{2}}}.
\end{equation}
A combination of (\ref{equ38}), (\ref{equ40}) (\ref{equ41}) and (\ref{equ2_21}) implies that
\begin{align}
\displaystyle & \quad\frac{\alpha-\beta}{M_{1}+\|x^{\dag}\|_{\ell_{2}}}\|x-x^{\dag}\|^{2}_{\ell_{2}} \nonumber \\ &\leq \mathcal{R}_{\alpha,\beta}(x)-\mathcal{R}_{\alpha,\beta}(x^{\dag})
\displaystyle \nonumber\\
&\displaystyle \quad+\left(\alpha\left(2+\frac{M_1}{\|x^{\dag}\|_{\ell_2}}\right)|I(x^{\dag})|
\max\limits_{i\in I(x^{\dag})}\|\omega_{i}\|_Y+2(\alpha-\beta)\frac{\|\omega^{\dag}\|_Y}{\|x^{\dag}\|_{\ell_{2}}}\right)(\|F(x)-F(x^{\dag})\|_Y+\frac{\gamma}{2}\|x-x^{\dag}\|^2),\nonumber
\end{align}
i.e.
\begin{displaymath}
\displaystyle\|x-x^{\dag}\|^{2}_{\ell_{2}}\leq \frac{1}{\left(1-\frac{\gamma c_1(c_2\alpha-c_3\beta)}{2(\alpha-\beta)}\right)}     \left[\frac{c_{1}}{\alpha-\beta}(\mathcal{R}_{\alpha,\beta}(x)-\mathcal{R}_{\alpha,\beta}(x^{\dag}))\displaystyle +\frac{c_{1}}{\alpha-\beta}(c_{2}\alpha-c_{3}\beta)\|F(x)-F(x^{\dag})\|_Y\right], \nonumber
\end{displaymath}
where $c_1$, $c_{2}$ and $c_{3}$ are defined by \eqref{equadd4}.
\end{proof}

\begin{theorem}\label{theorem1}
Suppose Assumption \ref{assumption2} holds. Let $x^{\delta}_{\alpha,\beta}$ be defined by \eqref{equ2_1}, and let the constants $c_1>0, c_2>c_3>0$ be as in Lemma \ref{lemma4}. Assume $\Gamma:=\frac{\gamma c_1(c_2\alpha-c_3\beta)}{2(\alpha-\beta)}<1$.

\begin{subequations}
1. If $q=1$ and $c_2\alpha-c_3\beta<1$, then
\begin{align}
\displaystyle \|x^{\delta}_{\alpha,\beta}-x^{\dag}\|^{2}_{\ell_{2}}\leq \displaystyle\frac{c_1[1+(c_{2}\alpha-c_{3}\beta)]\delta}{(\alpha-\beta)(1-\Gamma)},\qquad
\displaystyle \|F(x^{\delta}_{\alpha,\beta})-y^{\delta}\|_Y\leq \frac{[1+(c_{2}\alpha-c_{3}\beta)]\delta}{1-(c_{2}\alpha-c_{3}\beta)}.\label{equ21a}
 \end{align}

2. If $q>1$, then
\begin{align}
&\displaystyle\|x^{\delta}_{\alpha,\beta}-x^{\dag}\|^{2}_{\ell_2}\leq\frac{c_{1}}{(\alpha-\beta)(1-\Gamma)}\left[\frac{\delta^{q}}{q}+(c_{2}\alpha-c_{3}\beta)\delta+\frac{(q-1)2^{\frac{1}{q-1}}(c_{2}\alpha-c_{3}\beta)^{\frac{q}{q-1}}}{q}\right],\nonumber
\\\displaystyle&\|F(x^{\delta}_{\alpha,\beta})-y^{\delta}\|_Y^q\leq q\left[\frac{\delta^{q}}{q}+(c_{2}\alpha-c_{3}\beta)\delta+\frac{(q-1)2^{\frac{1}{q-1}}(c_{2}\alpha-c_{3}\beta)^{\frac{q}{q-1}}}{q}\right].\label{equ21b}
 \end{align}
\end{subequations}
\end{theorem}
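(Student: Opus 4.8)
The plan is to combine the minimizing property of $x^{\delta}_{\alpha,\beta}$ with the quadratic estimate of Lemma \ref{lemma4}, reducing the whole argument to a one-variable inequality in the residual $R:=\|F(x^{\delta}_{\alpha,\beta})-y^{\delta}\|_Y$. First I would record the basic inequality. Since $x^{\delta}_{\alpha,\beta}$ minimizes $\mathcal{J}_{\alpha,\beta}^{\delta}$ and $x^{\dag}$ is admissible with $F(x^{\dag})=y^{\dag}$, so that $\|F(x^{\dag})-y^{\delta}\|_Y\le\delta$, comparing the values at $x^{\delta}_{\alpha,\beta}$ and $x^{\dag}$ yields
\[ \tfrac1q R^{q}+\mathcal{R}_{\alpha,\beta}(x^{\delta}_{\alpha,\beta})\le \tfrac1q\delta^{q}+\mathcal{R}_{\alpha,\beta}(x^{\dag}). \]
Writing $D:=\mathcal{R}_{\alpha,\beta}(x^{\delta}_{\alpha,\beta})-\mathcal{R}_{\alpha,\beta}(x^{\dag})$ and abbreviating $a:=c_2\alpha-c_3\beta>0$, this reads $\tfrac1q R^q+D\le\tfrac1q\delta^q$. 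In particular $D$ stays bounded as $\delta\to0$, so $\mathcal{R}_{\alpha,\beta}(x^{\delta}_{\alpha,\beta})\le M$ for a suitable choice of $M$, which is exactly what allows Lemma \ref{lemma4} to be invoked. The triangle inequality and $F(x^{\dag})=y^{\dag}$ give the companion bound $\|F(x^{\delta}_{\alpha,\beta})-F(x^{\dag})\|_Y\le R+\delta$, and inserting this into Lemma \ref{lemma4} produces
\[ \|x^{\delta}_{\alpha,\beta}-x^{\dag}\|_{\ell_2}^2\le \frac{c_1}{(\alpha-\beta)(1-\Gamma)}\big[D+a(R+\delta)\big]. \]

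For $q=1$ the basic inequality is simply $R+D\le\delta$, and both estimates are linear in $R$. For the error bound I would substitute $D\le\delta-R$ into the displayed estimate; the $R$-terms collect into $-R\,[1-a]\le 0$ precisely because $a=c_2\alpha-c_3\beta<1$, leaving $c_1[1+a]\delta/((\alpha-\beta)(1-\Gamma))$. For the residual bound I would use that the left-hand side of Lemma \ref{lemma4} is non-negative, which forces $D\ge-a(R+\delta)$; feeding this into $R+D\le\delta$ and again using $a<1$ lets me solve for $R$ and obtain $R\le[1+a]\delta/(1-a)$.

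For $q>1$ the genuine obstacle is the mismatch between the data term $\tfrac1q R^q$ controlled by minimality and the terms that are only linear in $R$ (arising from $\|F(x^{\delta}_{\alpha,\beta})-F(x^{\dag})\|_Y\le R+\delta$). The remedy is Young's inequality applied to $aR$ with a parameter $\epsilon=\tfrac12$, i.e.\ $aR\le \tfrac{1}{2q}R^q+\tfrac{q-1}{q}2^{1/(q-1)}a^{q/(q-1)}$, which is exactly the source of the factor $2^{1/(q-1)}$ in the statement: only a fixed fraction of $\tfrac1q R^q$ is consumed. For the error bound I would put $D\le(\delta^q-R^q)/q$ into the displayed estimate and apply this split; the surviving $-\tfrac{1}{2q}R^q\le 0$ is discarded, producing the bracket $\tfrac{\delta^q}{q}+a\delta+\tfrac{(q-1)}{q}2^{1/(q-1)}a^{q/(q-1)}$. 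For the residual bound I would combine minimality with the non-negativity $D\ge-a(R+\delta)$ from Lemma \ref{lemma4} to reach the implicit inequality $\tfrac1q R^q\le\tfrac1q\delta^q+a(R+\delta)$, then absorb $aR$ by the same Young split, keeping the retained fraction of $R^q$ on the left so as to close the estimate.

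The main difficulty, and the only step that is not mechanical, is this $q>1$ residual estimate: because the control on $\|F(x^{\delta}_{\alpha,\beta})-F(x^{\dag})\|_Y$ is linear in $R$ while minimality only bounds $R^q$, the inequality for $R$ is self-referential, and one must deliberately reserve a positive fraction of the $R^q$ term rather than cancelling it entirely. Everything else --- the two applications of Lemma \ref{lemma4}, the triangle-inequality passage from $R$ to $\|F(x^{\delta}_{\alpha,\beta})-F(x^{\dag})\|_Y$, and the verification that $\mathcal{R}_{\alpha,\beta}(x^{\delta}_{\alpha,\beta})\le M$ so that the lemma is applicable --- is routine bookkeeping driven by the inequalities displayed above.
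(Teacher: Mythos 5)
Your proposal is correct and follows essentially the same route as the paper's proof: the minimality comparison at $x^{\dag}$, the insertion of Lemma \ref{lemma4} together with the triangle inequality $\|F(x^{\delta}_{\alpha,\beta})-F(x^{\dag})\|_Y\le R+\delta$, and for $q>1$ the identical Young split $aR\le\frac{1}{2q}R^{q}+\frac{(q-1)2^{1/(q-1)}a^{q/(q-1)}}{q}$ that reserves half of the data term. The only difference is organizational (you separate the error and residual bounds where the paper carries them in one chain of inequalities), which does not change the argument.
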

\begin{proof} By the definition of $\displaystyle x^{\delta}_{\alpha,\beta}$, it is clear that
\begin{displaymath}
 \begin{array}{llc}
\displaystyle\frac{1}{q}\|F(x^{\delta}_{\alpha,\beta})-y^{\delta}\|_Y^{q}+\mathcal{R}(x^{\delta}_{\alpha,\beta})\leq\frac{1}{q}\|F(x^{\dag})-y^{\delta}\|_Y^{q}+\mathcal{R}(x^{\dag}),
 \end{array}
 \end{displaymath}
i.e.
\begin{align}
\displaystyle\frac{\delta^{q}}{q}&\displaystyle\geq \mathcal{R}(x^{\delta}_{\alpha,\beta})-\mathcal{R}(x^{\dag})+\frac{1}{q}\|F(x^{\delta}_{\alpha,\beta})-y^{\delta}\|_Y^{q}.\nonumber
\end{align}
Then $\displaystyle \mathcal{R}(x^{\delta}_{\alpha,\beta})$ is bounded. Applying Lemma \ref{lemma4}, we see that
\begin{align}
\displaystyle\frac{\delta^{q}}{q}&\displaystyle\geq\frac{(\alpha-\beta)(1-\Gamma)}{c_{1}}\|x^{\delta}_{\alpha,\beta}-x^{\dag}\|^{2}_{\ell_2}-(c_{2}\alpha-c_{3}\beta)\|F(x^{\delta}_{\alpha,\beta})-F(x^{\dag})\|_Y
+\frac{1}{q}\|F(x^{\delta}_{\alpha,\beta})-y^{\delta}\|_Y^{q}\nonumber\\&
\displaystyle\geq\frac{(\alpha-\beta)(1-\Gamma)}{c_{1}}\|x^{\delta}_{\alpha,\beta}-x^{\dag}\|^{2}_{\ell_2}-(c_{2}\alpha-c_{3}\beta)\|F(x^{\delta}_{\alpha,\beta})-y^{\delta}\|_Y\nonumber\\&\displaystyle
\displaystyle\quad-(c_{2}\alpha-c_{3}\beta)\delta+\frac{1}{q}\|F(x^{\delta}_{\alpha,\beta})-y^{\delta}\|_Y^{q}.\label{equ48}
 \end{align}
So if $\displaystyle q=1$ and $c_2\alpha-c_3\beta<1$, then (\ref{equ21a}) holds.

If $\displaystyle q>1$, we apply Young's inequality $ ab\leq\frac{a^{q}}{q}+\frac{b^{q^{*}}}{q^{*}}$ for $a, b\geq0$ and $q^*>1$ defined by $\frac{1}{q}+\frac{1}{q^{*}}=1$. We have
\begin{align}
\displaystyle(c_{2}\alpha-c_{3}\beta)\|F(x^{\delta}_{\alpha,\beta})-y^{\delta}\|_Y&\displaystyle=2^{\frac{1}{q}}(c_{2}\alpha-c_{3}\beta)2^{-\frac{1}{q}}\|F(x^{\delta}_{\alpha,\beta})-y^{\delta}\|_Y\nonumber
\\&\displaystyle\leq\frac{1}{2q}\|F(x^{\delta}_{\alpha,\beta})-y^{\delta}\|_Y^{q}+\frac{(q-1)2^{\frac{1}{q-1}}\left(c_{2}\alpha-c_{3}\beta\right)^{\frac{q}{q-1}}}{q}.\label{equ49}
 \end{align}
A combination with (\ref{equ48}) and (\ref{equ49}) implies (\ref{equ21b}).
\end{proof}

\begin{remark}{\rm (A-priori estimation)}
Let $\displaystyle \beta=\eta\alpha$ be a fixed constant. For the case $q>1$, if $\alpha\sim\delta^{q-1}$, then
$\|x^{\delta}_{\alpha,\beta}-x^{\dag}\|_{\ell_2}\leq c\delta^{\frac{1}{2}}$ for some constant $\displaystyle c>0$.
For the particular case $q=1$, if $\alpha\sim\delta^{1-\epsilon}$ $(0<\epsilon<1)$, then
$\|x^{\delta}_{\alpha,\beta}-x^{\dag}\|_{\ell_2}\leq c\delta^{\frac{\epsilon}{2}}$ for some constant $\displaystyle c>0$.
\end{remark}

Note that due to the presence of the term $\frac{\gamma}{2}\|x-x^{\dag}\|_{\ell_2}^2$ in the estimation (\ref{equ2_21}),
we need an additional condition to obtain the convergence rate, i.e.\ $\gamma>0$ must be small enough such that
$\Gamma<1$. This additional condition is similar to the condition $\gamma\, \|\omega\|<1$ in the classical quadratic regularization (\cite{EHN1996}).

\begin{theorem} {\rm (Discrepancy principle)}
Keep the assumptions of Lemma \ref{lemma4} and let $\displaystyle x^{\delta}_{\alpha,\beta}$ be
defined by \eqref{equ2_1}, where the parameters $\alpha$ and $\beta$ $(\beta=\eta\alpha)$ are defined
via the discrepancy principle
\[\delta\leq\|F(x_{\alpha,\beta}^{\delta})-y^{\delta}\|_Y\leq \tau\delta~~(\tau\geq 1).\]
Then
\[\|x_{\alpha,\beta}^{\delta}-x^{\dag}\|_{\ell_2}\le\sqrt{\frac{c_1(c_2-c_3\eta)(\tau+1)\delta}{(1-\eta)(1-\Gamma)}}.\]
\end{theorem}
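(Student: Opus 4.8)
The plan is to combine the optimality of $x^{\delta}_{\alpha,\beta}$ with the two-sided discrepancy bound, reduce the problem to the estimate already proved in Lemma \ref{lemma4}, and then simplify using the relation $\beta=\eta\alpha$. The essential observation is that the discrepancy principle is precisely what forces the regularization-functional difference to be non-positive, which lets us discard the $\mathcal{R}_{\alpha,\beta}$-term in Lemma \ref{lemma4} and replace the usual $\delta^{q}$ scaling by a residual bound linear in $\delta$.

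First I would use that $x^{\delta}_{\alpha,\beta}$ minimizes $\mathcal{J}_{\alpha,\beta}^{\delta}$, so comparing with $x^{\dag}$ gives
\[
\frac{1}{q}\|F(x^{\delta}_{\alpha,\beta})-y^{\delta}\|_Y^{q}+\mathcal{R}_{\alpha,\beta}(x^{\delta}_{\alpha,\beta})\le\frac{1}{q}\|F(x^{\dag})-y^{\delta}\|_Y^{q}+\mathcal{R}_{\alpha,\beta}(x^{\dag}).
\]
Since $F(x^{\dag})=y$ and $\|y-y^{\delta}\|_Y\le\delta$, the data term on the right is at most $\delta^{q}/q$, while the lower bound in the discrepancy principle forces $\|F(x^{\delta}_{\alpha,\beta})-y^{\delta}\|_Y^{q}\ge\delta^{q}$ on the left. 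Subtracting the two data terms, these facts together yield the crucial inequality
\[
\mathcal{R}_{\alpha,\beta}(x^{\delta}_{\alpha,\beta})-\mathcal{R}_{\alpha,\beta}(x^{\dag})\le 0 .
\]

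Next I would invoke Lemma \ref{lemma4} with $x=x^{\delta}_{\alpha,\beta}$ (the bound $\mathcal{R}_{\alpha,\beta}(x^{\delta}_{\alpha,\beta})\le M$ holding because of the displayed inequality). Because the $\mathcal{R}_{\alpha,\beta}$-difference is non-positive, the corresponding term in the Lemma \ref{lemma4} estimate may be dropped, leaving
\[
\|x^{\delta}_{\alpha,\beta}-x^{\dag}\|^{2}_{\ell_{2}}\le\frac{1}{1-\Gamma}\,\frac{c_{1}(c_{2}\alpha-c_{3}\beta)}{\alpha-\beta}\,\|F(x^{\delta}_{\alpha,\beta})-F(x^{\dag})\|_Y .
\]
To control the residual I would apply the triangle inequality together with \emph{both} sides of the discrepancy principle, obtaining $\|F(x^{\delta}_{\alpha,\beta})-F(x^{\dag})\|_Y\le\|F(x^{\delta}_{\alpha,\beta})-y^{\delta}\|_Y+\|y^{\delta}-y\|_Y\le\tau\delta+\delta=(\tau+1)\delta$.

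Finally I would substitute $\beta=\eta\alpha$, so that $\alpha-\beta=\alpha(1-\eta)$ and $c_{2}\alpha-c_{3}\beta=\alpha(c_{2}-c_{3}\eta)$; the factor $\alpha$ cancels to leave $\frac{c_{1}(c_{2}-c_{3}\eta)}{1-\eta}$, and taking the square root produces exactly the stated bound $\|x_{\alpha,\beta}^{\delta}-x^{\dag}\|_{\ell_2}\le\sqrt{c_1(c_2-c_3\eta)(\tau+1)\delta/[(1-\eta)(1-\Gamma)]}$. I do not anticipate a genuine obstacle here, since the argument is a direct application of Lemma \ref{lemma4}; the only step requiring care is recognizing that the discrepancy principle, not an a-priori parameter choice, is what annihilates the regularization-functional difference and thereby yields the $O(\delta^{1/2})$ rate without any separate tuning of $\alpha$.
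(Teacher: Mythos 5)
Your proposal is correct and follows essentially the same route as the paper: the lower bound of the discrepancy principle combined with the minimality of $x^{\delta}_{\alpha,\beta}$ yields $\mathcal{R}_{\alpha,\beta}(x^{\delta}_{\alpha,\beta})\leq\mathcal{R}_{\alpha,\beta}(x^{\dag})$, after which Lemma \ref{lemma4} and the residual bound $\|F(x^{\delta}_{\alpha,\beta})-F(x^{\dag})\|_Y\leq(\tau+1)\delta$ give the stated estimate upon setting $\beta=\eta\alpha$. Your added remark that the hypothesis $\mathcal{R}_{\alpha,\beta}(x^{\delta}_{\alpha,\beta})\leq M$ needed for Lemma \ref{lemma4} follows from the same comparison inequality is a small point the paper leaves implicit, but the argument is otherwise identical.
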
  
\begin{proof}
By the definition of $x_{\alpha,\beta}^{\delta}$, $\alpha$ and $\beta$, we see that
\begin{equation}  
\displaystyle \frac{1}{q}{\delta}^q+\mathcal{R}_{\alpha,\beta}(x_{\alpha,\beta}^{\delta})\displaystyle\leq \frac{1}{q}\|F(x_{\alpha,\beta}^{\delta})-y^{\delta}\|_Y^q+\mathcal{R}_{\alpha,\beta}(x_{\alpha,\beta}^{\delta})\leq \frac{1}{q}\|F(x^{\dag})-y^{\delta}\|_Y^q+\mathcal{R}_{\alpha,\beta}(x^{\dag}).
 \end{equation}
Hence $\mathcal{R}_{\alpha,\beta}(x_{\alpha,\beta}^{\delta})\leq \mathcal{R}_{\alpha,\beta}(x^{\dag})$. It follows from Lemma \ref{lemma4} that
\begin{align}
\displaystyle 0\geq \mathcal{R}_{\alpha,\beta}(x_{\alpha,\beta}^{\delta})-\mathcal{R}_{\alpha,\beta}(x^{\dag})&\displaystyle\geq
\frac{(\alpha-\beta)(1-\Gamma)}{c_{1}}\|x^{\delta}_{\alpha,\beta}-x^{\dag}\|^{2}_{\ell_2}-(c_{2}\alpha-c_{3}\beta)\|F(x^{\delta}_{\alpha,\beta})-F(x^{\dag})\|_Y
\displaystyle \nonumber\\& \displaystyle\geq \frac{(\alpha-\beta)(1-\Gamma)}{c_{1}}\|x^{\delta}_{\alpha,\beta}-x^{\dag}\|^{2}_{\ell_2}-(c_{2}\alpha-c_{3}\beta)(\tau+1)\delta.
\end{align}
Then
\[ \|x_{\alpha,\beta}^{\delta}-x^{\dag}\|_{\ell_{2}}^2\leq\frac{c_1(c_{2}\alpha-c_{3}\beta)(\tau+1)\delta}{(\alpha-\beta)(1-\Gamma)}.\]
The theorem is proven with $\beta=\eta\alpha$.
\end{proof}


\subsection{Convergence rate $O(\delta)$}

In \cite[p.\ 6]{KNS2012}, it is pointed out that for ill-posed problems, (\ref{Taylor}) carries too little information
about the local behaviour of $F$ around $x^{\dag}$ to draw conclusions about convergence, since the left hand side
of (\ref{Taylor}) can be much smaller than the right hand
side for certain pairs of points $x$ and $x^{\dag}$, no matter how close to each other they are. Therefore, several researchers adopted
\begin{equation}\label{equ2_32}
\|F(x)-F(x^{\dag})-F'(x^{\dag})(x-x^{\dag})\|_Y\leq \gamma\|F(x)-F(x^{\dag})\|_Y, \quad 0<\gamma<\frac{1}{2}
\end{equation}
as the condition on the nonlinearity of $F$, see \cite[pp.\ 278--279]{EHN1996}, \cite[p.\ 6]{KNS2012},
\cite[pp.\ 69--70]{SKHK2012}.

\begin{assumption}\label{assumption3}
Let $x^{\dagger}\neq 0$ be an $\mathcal R_{\eta}$-minimizing solution of the problem $F(x)=y$ that is sparse. We further assume that

\rm{(i)}\quad $F$ is Fr\'{e}chet differentiable at $x^{\dag}$. There exists an $\omega_i\in D(F'(x^{\dag})^{*})$ such that
\begin{equation}\label{scondition}
e_i=F'(x^{\dag})^{*}\omega_i\quad for\ every\quad i\in I(x^{\dag}),
\end{equation}
where $I(x^{\dag})$ is defined in \eqref{equ2_1a}.

\rm{(ii)}\quad There exist constants $0<\gamma<\frac{1}{2}$ and $\delta>0$ such that
\begin{equation}\label{nonlicondition}
\displaystyle \|F(x)-F(x^{\dag})-F'(x^{\dag})(x-x^{\dag})\|_Y\leq \gamma\|F(x)-F(x^{\dag})\|_Y
 \end{equation}
for any $x\in\ell_2\cap B_{\delta}(x^{\dag})$, where $B_{\delta}(x^{\dag}):=\{x \mid \|x-x^{\dag}\|_{\ell_2}\leq \delta\}$.
\end{assumption}

\begin{remark}\label{remark3}
With the triangle inequality, it follows from \eqref{nonlicondition} that
\begin{equation}\label{equ2_33}
\frac{1}{1+\gamma}\|F'(x^{\dag})(x-x^{\dag})\|_Y\le\|F(x)-F(x^{\dag})\|_Y\le\frac{1}{1-\gamma}\|F'(x^{\dag})(x-x^{\dag})\|_Y
\end{equation}
which is an estimate for $\|F'(x^{\dag})(x-x^{\dag})\|_Y$.  Actually, a more direct restriction
\[\|F'(x^{\dag})(x-x^{\dag})\|_Y\leq(1+\gamma)\|F(x)-F(x^{\dag})\|_Y\]
has been adopted by several researchers. This assumption immediately leads to a bound of
the critical inner product $\langle F'(x^{\dag})(x-x^{\dag}),\omega_i\rangle$.
\end{remark}

Next, we derive an inequality from the restriction (\ref{nonlicondition}). The linear convergence rate $O(\delta)$
follows from the inequality directly.

\begin{lemma}\label{lemma5} Let Assumption \ref{assumption3} hold and $\mathcal R_{\alpha,\beta}(x)\leq M$
for a given $M>0$. Then there exist constants $c_{4}>c_{5}$ such that
\begin{equation}\label{equ4}
(\alpha-\beta)\|x-x^{\dag}\|_{\ell_{1}}\leq \mathcal{R}_{\alpha,\beta}(x)-\mathcal{R}_{\alpha,\beta}(x^{\dag})+(c_{4}\alpha-c_{5}\beta)\|F(x)-F(x^{\dag})\|_Y.
\end{equation}
\end{lemma}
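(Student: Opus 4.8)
The goal is to bound $(\alpha-\beta)\|x-x^\dag\|_{\ell_1}$ in terms of the difference $\mathcal{R}_{\alpha,\beta}(x)-\mathcal{R}_{\alpha,\beta}(x^\dag)$ plus a term controlled by $\|F(x)-F(x^\dag)\|_Y$. This is the $\ell_1$-analogue of Lemma~\ref{lemma4}, but with the stronger nonlinearity condition \eqref{nonlicondition} it should yield a clean linear rate. The natural first step is to decompose $\mathcal{R}_{\alpha,\beta}(x)-\mathcal{R}_{\alpha,\beta}(x^\dag)$ just as in Lemma~\ref{lemma4}, writing $\mathcal{R}_{\alpha,\beta}=\alpha\mathcal{K}+(\alpha-\beta)\|\cdot\|_{\ell_2}$ with $\mathcal{K}(x)=\|x\|_{\ell_1}-\|x\|_{\ell_2}$, and then seek a lower bound on $\mathcal{K}(x)-\mathcal{K}(x^\dag)$ in terms of the $\ell_1$-distance on the support.

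**Key steps.**
First I would split the coordinates into those in $I(x^\dag)$ and those outside. On the complement $i\notin I(x^\dag)$ one has $x_i^\dag=0$, so these indices contribute $\|x\|_{\ell_1}$-mass that helps the left side; the heart of the matter is controlling the $\ell_2$ terms on the finite support $I(x^\dag)$. Using the same algebraic identity $\|x\|_{\ell_2}-\|x^\dag\|_{\ell_2}=(\|x\|_{\ell_2}^2-\|x^\dag\|_{\ell_2}^2)/(\|x\|_{\ell_2}+\|x^\dag\|_{\ell_2})$ and the reverse-triangle bound on the support restricted to $I(x^\dag)$, together with the a-priori bound $\|x\|_{\ell_2}\le M_1$ coming from $\mathcal{R}_{\alpha,\beta}(x)\le M$ and coercivity (Lemma~\ref{lemma2_0}(i)), I would obtain a lower bound of the form $\mathcal{K}(x)-\mathcal{K}(x^\dag)\ge -C\sum_{i\in I(x^\dag)}|x_i-x_i^\dag|$ for an explicit constant $C$ depending on $M_1$ and $\|x^\dag\|_{\ell_2}$. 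Second, I would invoke Assumption~\ref{assumption3}(i) exactly as in \eqref{equ40}: each coordinate difference on the support satisfies $|x_i-x_i^\dag|=|\langle\omega_i,F'(x^\dag)(x-x^\dag)\rangle|\le\|\omega_i\|_Y\|F'(x^\dag)(x-x^\dag)\|_Y$, so the support sum is bounded by $|I(x^\dag)|\max_i\|\omega_i\|_Y\,\|F'(x^\dag)(x-x^\dag)\|_Y$. The crucial third step is to replace $\|F'(x^\dag)(x-x^\dag)\|_Y$ by $\|F(x)-F(x^\dag)\|_Y$ using the \emph{upper} half of \eqref{equ2_33}, namely $\|F'(x^\dag)(x-x^\dag)\|_Y\le(1+\gamma)\|F(x)-F(x^\dag)\|_Y$; this is precisely where Assumption~\ref{assumption3}(ii) is used in place of the Taylor bound \eqref{equ2_21} of the previous subsection, and it is what eliminates the quadratic $\frac{\gamma}{2}\|x-x^\dag\|_{\ell_2}^2$ term that forced the $\sqrt\delta$ rate earlier.

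**Assembling the constants.**
Combining the support-sum bound for the $\mathcal{K}$-difference (multiplied by $\alpha$) with the $\ell_2$-inner-product term $(\alpha-\beta)\langle x^\dag,x-x^\dag\rangle/(\|x\|_{\ell_2}+\|x^\dag\|_{\ell_2})$ handled via Lemma~\ref{lemmaadd11} and \eqref{equ41}, all the $\|F'(x^\dag)(x-x^\dag)\|_Y$ factors get converted to $\|F(x)-F(x^\dag)\|_Y$ through $(1+\gamma)$. Collecting the $\alpha$-coefficients into $c_4$ and the $\beta$-coefficient (arising from the $2(\alpha-\beta)\|\omega^\dag\|_Y/\|x^\dag\|_{\ell_2}$ term) into $c_5$ yields \eqref{equ4} with $c_4=(1+\gamma)\big[(2+M_1/\|x^\dag\|_{\ell_2})|I(x^\dag)|\max_i\|\omega_i\|_Y+2\|\omega^\dag\|_Y/\|x^\dag\|_{\ell_2}\big]$ and $c_5=2(1+\gamma)\|\omega^\dag\|_Y/\|x^\dag\|_{\ell_2}$, so that $c_4>c_5$ holds automatically. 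I expect the main obstacle to be the bookkeeping in the lower bound for $\mathcal{K}(x)-\mathcal{K}(x^\dag)$: one must be careful that the off-support $\ell_1$-mass is correctly signed (it strengthens the estimate rather than weakening it) and that the reverse-triangle step on $I(x^\dag)$ is applied to the $\ell_2$-norms in a way that produces the $\ell_1$-distance $\sum_{i\in I(x^\dag)}|x_i-x_i^\dag|$ with the right constant. Everything else is a direct transcription of the machinery already developed for Lemma~\ref{lemma4}, with \eqref{equ2_33} doing the work that \eqref{equ2_21} did before.
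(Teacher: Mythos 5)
Your use of Assumption \ref{assumption3}(ii) --- replacing $\|F'(x^{\dag})(x-x^{\dag})\|_Y$ by $(1+\gamma)\|F(x)-F(x^{\dag})\|_Y$ via \eqref{equ2_33} so that no quadratic remainder appears --- is exactly the right ingredient and matches the paper. But the core of your argument has a structural gap: the decomposition you import from Lemma \ref{lemma4} cannot produce the left-hand side of \eqref{equ4}. Writing $\mathcal{R}_{\alpha,\beta}=\alpha\mathcal{K}+(\alpha-\beta)\|\cdot\|_{\ell_2}$ and expanding $\|x\|_{\ell_2}-\|x^{\dag}\|_{\ell_2}$ through the squared-norm identity yields, after the $\omega^{\dag}$ bookkeeping, an estimate for $(\alpha-\beta)\|x-x^{\dag}\|_{\ell_2}^{2}/(\|x\|_{\ell_2}+\|x^{\dag}\|_{\ell_2})$, i.e.\ a bound on the \emph{squared} $\ell_2$-distance --- precisely what Lemma \ref{lemma4} already gives and what forces the $O(\delta^{1/2})$ rate. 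Nothing in your assembly produces the term $(\alpha-\beta)\|x-x^{\dag}\|_{\ell_1}$; the remark that the off-support mass ``helps the left side'' is not substantiated by any identity relating $\|x-x^{\dag}\|_{\ell_1}$ to $\mathcal{R}_{\alpha,\beta}(x)-\mathcal{R}_{\alpha,\beta}(x^{\dag})$.

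The paper's proof starts instead from the difference $(\alpha-\beta)\|x-x^{\dag}\|_{\ell_1}-(\mathcal{R}_{\alpha,\beta}(x)-\mathcal{R}_{\alpha,\beta}(x^{\dag}))$ itself and shows, via the splitting $\|x-x^{\dag}\|_{\ell_1}=\sum_{i\in I(x^{\dag})}|x_i-x_i^{\dag}|+\sum_{i\notin I(x^{\dag})}|x_i|$, that everything off the support either cancels or has the right sign: the $\alpha\sum_{i\notin I(x^{\dag})}|x_i|$ inside $\mathcal{R}_{\alpha,\beta}(x)$ absorbs $(\alpha-\beta)\sum_{i\notin I(x^{\dag})}|x_i|$, and the leftover $\beta\sum_{i\notin I(x^{\dag})}|x_i|$ is dominated by the corresponding block of $\beta\|x\|_{\ell_2}$ (the observation $T_2\ge 0$ following \eqref{equ27}). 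What remains is a multiple of the on-support sum $\sum_{i\in I(x^{\dag})}|x_i-x_i^{\dag}|$ --- note that the $(\alpha-\beta)\sum_{i\in I(x^{\dag})}|x_i-x_i^{\dag}|$ portion of the left-hand side must itself be moved to the right and absorbed into this sum --- to which the source condition \eqref{scondition} and then \eqref{nonlicondition} apply, giving \eqref{equ4} with $c_4=2|I(x^{\dag})|\max_{i}\|\omega_i\|_Y(1+\gamma)$ and $c_5=-\frac{M_1}{\|x^{\dag}\|_{\ell_2}}|I(x^{\dag})|\max_{i}\|\omega_i\|_Y(1+\gamma)$; in particular $c_5<0$, and the $\omega^{\dag}$ terms you carry over from Lemma \ref{lemmaadd11} never arise, because the inner product $\langle x^{\dag},x-x^{\dag}\rangle$ only appears when one uses the squared-norm identity. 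As written, your route would reprove a variant of Lemma \ref{lemma4} with improved constants, but it would not yield the $\ell_1$ bound and hence not the $O(\delta)$ rate.
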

\begin{proof}
By the definition of $I(x^{\dag})$, we have
\[\displaystyle(\alpha-\beta)\|x-x^{\dag}\|_{\ell_{1}}=(\alpha-\beta)\left(\sum\limits_{i\in I(x^{\dag})}|x_{i}-x^{\dag}_{i}|+\sum\limits_{i\notin I(x^{\dag})}|x_{i}|\right).\]
Then,
\begin{align}
&\displaystyle(\alpha-\beta)\|x-x^{\dag}\|_{\ell_{1}}-(\mathcal{R}_{\alpha,\beta}(x)-\mathcal{R}_{\alpha,\beta}(x^{\dag}))\nonumber\\\displaystyle=&-\alpha\sum\limits_{i\in I(x^{\dag})}\left(|x_{i}|-|x_i^{\dag}|\right)+(\alpha-\beta)\sum\limits_{i\in I(x^{\dag})}|x_{i}-x^{\dag}_{i}|+\beta(T_1-T_2),\label{equ27}
\end{align}
where
\begin{align*}
T_1&=\left(\sum\limits_i|x_{i}|^{2}\right)^{\frac{1}{2}}-\left(\sum\limits_{i\notin I(x^{\dag})}|x_{i}|^{2}\right)^{\frac{1}{2}}-\left(\sum\limits_{i\in I(x^{\dag})}|x^{\dag}_{i}|^{2}\right)^{\frac{1}{2}},\\
T_2&=\sum\limits_{i\notin I(x^{\dag})}|x_{i}|-\left(\sum\limits_{i\notin I(x^{\dag})}|x_{i}|^{2}\right)^{\frac{1}{2}}.
\end{align*}
Observe that $T_2\geq 0$. Since
\[\left(\sum\limits_i|x_{i}|^{2}\right)^{\frac{1}{2}}\leq \left(\sum\limits_{i\in I(x^{\dag})}|x_{i}|^{2}\right)^{\frac{1}{2}}
+\left(\sum\limits_{i\notin I(x^{\dag})}|x_{i}|^{2}\right)^{\frac{1}{2}},\]
we see that
\begin{align}
T_1\leq T_3:=\left(\sum\limits_{i\in I(x^{\dag})}|x_{i}|^{2}\right)^{\frac{1}{2}}-\left(\sum\limits_{i\in I(x^{\dag})}|x^{\dag}_{i}|^{2}\right)^{\frac{1}{2}}.
\end{align}
Thus, from (\ref{equ27}),
\begin{align}
\displaystyle (\alpha-\beta)\|x-x^{\dag}\|_{\ell_{1}}&\leq \mathcal{R}_{\alpha,\beta}(x)-\mathcal{R}_{\alpha,\beta}(x^{\dag})+\alpha\sum\limits_{i\in I(x^{\dag})}|x_{i}-x^{\dag}_{i}|\nonumber\\&
\quad+(\alpha-\beta)\sum\limits_{i\in I(x^{\dag})}|x_{i}-x^{\dag}_{i}|+\beta T_3.\label{equ55}
 \end{align}
Let the constant $M_1$ be as in the proof of Lemma \ref{lemma4}. Then
\begin{align*}
& |x_{i}|+|x^{\dag}_{i}|\leq M_1+\|x^{\dag}\|_{\ell_{2}},\\
& 0<\|x^{\dag}\|_{\ell_{2}}\leq\left(\sum\limits_{i\in I(x^{\dag})}|x_{i}|^{2}\right)^{\frac{1}{2}}+\left(\sum\limits_{i\in I(x^{\dag})}|x^{\dag}_{i}|^{2}\right)^{\frac{1}{2}}.
\end{align*}
Consequently
\begin{equation}\label{equ56}
T_3=\frac{\sum\limits_{i\in I(x^{\dag})}(|x_{i}|-|x^{\dag}_{i}|)(|x_{i}|+|x^{\dag}_{i}|)}{\left(\sum\limits_{i\in I(x^{\dag})}|x_{i}|^{2}\right)^{\frac{1}{2}}+\left(\sum\limits_{i\in I(x^{\dag})}|x^{\dag}_{i}|^{2}\right)^{\frac{1}{2}}}\leq\frac{M_1+\|x^{\dag}\|_{\ell_{2}}}{\|x^{\dag}\|_{\ell_{2}}}\sum\limits_{i\in I(x^{\dag})}|x_{i}-x^{\dag}_{i}|.
\end{equation}
A combination of (\ref{equ55}) and (\ref{equ56}) shows that
\begin{equation}\label{equ57}
 \begin{array}{llc}
\displaystyle (\alpha-\beta)\|x-x^{\dag}\|_{\ell_{1}}\leq \mathcal{R}_{\alpha,\beta}(x)-\mathcal{R}_{\alpha,\beta}(x^{\dag})+\left(2\alpha+\frac{M_1}{\|x^{\dag}\|_{\ell_2}}\beta\right)\sum\limits_{i\in I(x^{\dag})}|x_{i}-x^{\dag}_{i}|.
\end{array}
\end{equation}
In addition, by Assumption \ref{assumption2},
\[\displaystyle \displaystyle |x_{i}-x_{i}^{\dag}|=|\langle e_{i},x-x^{\dag}\rangle|=|\langle \omega_{i},F'(x^{\dag})(x-x^{\dag})\rangle|\leq\max\limits_{i\in I(x^{\dag})}\|\omega_{i}\|_Y\|F'(x^{\dag})(x-x^{\dag})\|_Y.\]
Hence,
\[  \displaystyle\sum\limits_{i\in I(x^{\dag})}|x_{i}-x^{\dag}_{i}|\leq|I(x^{\dag})|\max\limits_{i\in I(x^{\dag})}\|\omega_{i}\|_Y\|F'(x^{\dag})(x-x^{\dag})\|_Y,  \]
where $\displaystyle |I(x^{\dag})|$ denotes the size of the index set $\displaystyle I(x^{\dag})$. Then, by (\ref{nonlicondition}), we have
\begin{equation}\label{equ59}
\displaystyle\sum\limits_{i\in I(x^{\dag})}|x_{i}-x^{\dag}_{i}|\leq|I(x^{\dag})|\max\limits_{i\in I(x^{\dag})}\|\omega_{i}\|_Y(1+\gamma)\|F(x)-F(x^{\dag})\|_Y.
 \end{equation}
A combination of (\ref{equ57}) and (\ref{equ59}) implies that
\begin{align}
(\alpha-\beta)\|x-x^{\dag}\|_{\ell_1}&\le\mathcal{R}_{\alpha,\beta}(x)-\mathcal{R}_{\alpha,\beta}(x^{\dag})\nonumber
\\
&\quad{}+\left(2\alpha+\frac{M_1}{\|x^{\dag}\|_{\ell_2}}\beta\right)|I(x^{\dag})|\max\limits_{i\in I(x^{\dag})}\|\omega_{i}\|_Y(1+\gamma)\|F(x)-F(x^{\dag})\|_Y\label{equ2_43}
 \end{align}
i.e.
\[\displaystyle (\alpha-\beta)\|x-x^{\dag}\|_{\ell_{1}}\leq \mathcal{R}_{\alpha,\beta}(x)-\mathcal{R}_{\alpha,\beta}(x^{\dag})+(c_{4}\alpha-c_{5}\beta)\|F(x)-F(x^{\dag})\|_Y,\]
where
\[c_{4}=2|I(x^{\dag})|\max\limits_{i\in I(x^{\dag})}\|\omega_{i}\|_Y(1+\gamma),\quad
c_{5}=-\frac{M_1}{\|x^{\dag}\|_{\ell_2}}|I(x^{\dag})|\max\limits_{i\in I(x^{\dag})}\|\omega_{i}\|_Y(1+\gamma)\]
and $c_{4}\alpha-c_{5}\beta>0$. The proof is completed.
\end{proof}

\begin{theorem}\label{theorem2}
Suppose Assumption \ref{assumption2} holds. Let $x^{\delta}_{\alpha,\beta}$ be defined by
\eqref{equ2_1} and let the constants $c_4>c_5$ be as in Lemma \ref{lemma5}.

\begin{subequations}
1. If $\displaystyle q=1$ and $1-(c_{4}\alpha-c_{5}\beta)>0$, then
\begin{align}
\displaystyle \|x^{\delta}_{\alpha,\beta}-x^{\dag}\|_{\ell_{1}}\leq
\frac{1+(c_{4}\alpha-c_{5}\beta)}{(\alpha-\beta)}\delta,\quad\quad
\|F(x^{\delta}_{\alpha,\beta})-y^{\delta}\|_Y\leq \frac{1+(c_{4}\alpha-c_{5}\beta)}
{1-(c_{4}\alpha-c_{5}\beta)}\delta.\label{equ33a}
 \end{align}

2. If $\displaystyle q>1$, then
\begin{align}
\displaystyle &\|x^{\delta}_{\alpha,\beta}-x^{\dag}\|_{\ell_{1}}\leq\frac{1}{\alpha-\beta}\left[\frac{\delta^{q}}{q}+(c_{4}\alpha-c_{5}\beta)\delta+\frac{(q-1)2^{\frac{1}{q-1}}(c_{4}\alpha-c_{5}\beta)^{\frac{q}{q-1}}}{q}\right],\nonumber
\\ &\displaystyle \|F(x^{\delta}_{\alpha,\beta})-y^{\delta}\|_Y^q\leq q\left[\frac{\delta^{q}}{q}+(c_{4}\alpha-c_{5}\beta)\delta+\frac{(q-1)2^{\frac{1}{q-1}}(c_{4}\alpha-c_{5}\beta)^{\frac{q}{q-1}}}{q}\right].\label{equ33b}
 \end{align}
\end{subequations}
\end{theorem}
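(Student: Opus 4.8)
The plan is to replay the argument of Theorem~\ref{theorem1} almost verbatim, replacing the quadratic $\ell_2$-estimate of Lemma~\ref{lemma4} by the linear $\ell_1$-estimate of Lemma~\ref{lemma5}; the key simplification is that \eqref{equ4} carries no $(1-\Gamma)$ correction factor, which is precisely why the resulting rate sharpens to $O(\delta)$. First I would exploit the minimizing property of $x^{\delta}_{\alpha,\beta}$: comparing $\mathcal{J}^{\delta}_{\alpha,\beta}(x^{\delta}_{\alpha,\beta})$ with $\mathcal{J}^{\delta}_{\alpha,\beta}(x^{\dag})$ and using $\|F(x^{\dag})-y^{\delta}\|_Y=\|y-y^{\delta}\|_Y\le\delta$ gives
\[\frac{\delta^{q}}{q}\ge \mathcal{R}_{\alpha,\beta}(x^{\delta}_{\alpha,\beta})-\mathcal{R}_{\alpha,\beta}(x^{\dag})+\frac{1}{q}\|F(x^{\delta}_{\alpha,\beta})-y^{\delta}\|_Y^{q}.\]
In particular $\mathcal{R}_{\alpha,\beta}(x^{\delta}_{\alpha,\beta})\le \delta^{q}/q+\mathcal{R}_{\alpha,\beta}(x^{\dag})$ is bounded by a constant $M$ uniformly for small $\delta$, so Lemma~\ref{lemma5} is applicable to $x^{\delta}_{\alpha,\beta}$.

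Next I would insert the lower bound $\mathcal{R}_{\alpha,\beta}(x^{\delta}_{\alpha,\beta})-\mathcal{R}_{\alpha,\beta}(x^{\dag})\ge(\alpha-\beta)\|x^{\delta}_{\alpha,\beta}-x^{\dag}\|_{\ell_1}-(c_4\alpha-c_5\beta)\|F(x^{\delta}_{\alpha,\beta})-F(x^{\dag})\|_Y$ supplied by \eqref{equ4}, and then pass from $\|F(x^{\delta}_{\alpha,\beta})-F(x^{\dag})\|_Y$ to $\|F(x^{\delta}_{\alpha,\beta})-y^{\delta}\|_Y+\delta$ via the triangle inequality (recall $F(x^{\dag})=y$ and $\|y-y^{\delta}\|_Y\le\delta$). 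Writing $r:=\|F(x^{\delta}_{\alpha,\beta})-y^{\delta}\|_Y$, this yields the master inequality
\[\frac{\delta^{q}}{q}\ge(\alpha-\beta)\|x^{\delta}_{\alpha,\beta}-x^{\dag}\|_{\ell_1}-(c_4\alpha-c_5\beta)r-(c_4\alpha-c_5\beta)\delta+\frac{1}{q}r^{q},\]
which is the exact $\ell_1$-analogue of \eqref{equ48}.

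For $q=1$ the terms $-(c_4\alpha-c_5\beta)r+r$ combine to $[1-(c_4\alpha-c_5\beta)]r$, so the master inequality becomes $[1+(c_4\alpha-c_5\beta)]\delta\ge(\alpha-\beta)\|x^{\delta}_{\alpha,\beta}-x^{\dag}\|_{\ell_1}+[1-(c_4\alpha-c_5\beta)]r$; since $1-(c_4\alpha-c_5\beta)>0$ both summands on the right are nonnegative, and discarding each in turn produces the two estimates of \eqref{equ33a}. For $q>1$ I would control the cross term by Young's inequality exactly as in \eqref{equ49}, i.e. $(c_4\alpha-c_5\beta)r\le\frac{1}{2q}r^{q}+\frac{(q-1)2^{1/(q-1)}(c_4\alpha-c_5\beta)^{q/(q-1)}}{q}$; substituting and collecting terms leaves $(\alpha-\beta)\|x^{\delta}_{\alpha,\beta}-x^{\dag}\|_{\ell_1}+\frac{1}{2q}r^{q}$ bounded by the bracketed quantity in \eqref{equ33b}, whence dropping one nonnegative summand at a time gives the state and residual bounds.

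Because every step is a direct transcription of the proof of Theorem~\ref{theorem1}, I do not anticipate any genuine difficulty; the only points demanding care are the two bookkeeping items above --- checking that $\mathcal{R}_{\alpha,\beta}(x^{\delta}_{\alpha,\beta})\le M$ so that the constants $c_4,c_5$ (which are defined through $M_1=M_1(M)$) are legitimately available, and routing the triangle inequality so that the residual $r$ rather than $\|F(x^{\delta}_{\alpha,\beta})-F(x^{\dag})\|_Y$ appears. I would also remark that the hypothesis in the statement ought to read ``Assumption~\ref{assumption3}'' instead of ``Assumption~\ref{assumption2},'' since it is the nonlinearity condition \eqref{nonlicondition}, invoked through Lemma~\ref{lemma5}, that the argument actually uses.
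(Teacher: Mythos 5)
Your proposal is correct and follows essentially the same route as the paper's own proof: compare $\mathcal{J}^{\delta}_{\alpha,\beta}$ at the minimizer and at $x^{\dag}$, note the boundedness of $\mathcal{R}_{\alpha,\beta}(x^{\delta}_{\alpha,\beta})$ so that Lemma \ref{lemma5} applies, insert \eqref{equ4}, pass to the residual via the triangle inequality, and finish directly for $q=1$ or by Young's inequality for $q>1$. Your side remark is also well taken: the hypothesis should indeed invoke Assumption \ref{assumption3} rather than Assumption \ref{assumption2}, since it is the nonlinearity condition \eqref{nonlicondition} that enters through Lemma \ref{lemma5}.
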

\begin{proof}
By the definition of $\displaystyle x^{\delta}_{\alpha,\beta}$, it is obvious that
\[\displaystyle \frac{1}{q}\|F(x^{\delta}_{\alpha,\beta})-y^{\delta}\|_Y^{q}+\mathcal R_{\alpha,\beta}(x^{\delta}_{\alpha,\beta})\leq\frac{\delta^{q}}{q}+\mathcal{R}_{\alpha,\beta}(x^{\dag}).\]
Then $\displaystyle \mathcal R_{\alpha,\beta}(x^{\delta}_{\alpha,\beta})$ is bounded. From Lemma \ref{lemma5} we see that
\begin{align}
\displaystyle\frac{\delta^{q}}{q}&\geq \mathcal R_{\alpha,\beta}(x^{\delta}_{\alpha,\beta})-\mathcal R_{\alpha,\beta}(x^{\dag})+\frac{1}{q}\|F(x^{\delta}_{\alpha,\beta})-y^{\delta}\|_Y^{q}\nonumber\\
\displaystyle&\geq(\alpha-\beta)\|x^{\delta}_{\alpha,\beta}-x^{\dag}\|_{\ell_{1}}-(c_{4}\alpha-c_{5}\beta)\|F(x^{\delta}_{\alpha,\beta})-F(x^{\dag})\|_Y+\frac{1}{q}\|F(x^{\delta}_{\alpha,\beta})-y^{\delta}\|_Y^{q}\nonumber\\
\displaystyle&\geq(\alpha-\beta)\|x^{\delta}_{\alpha,\beta}-x^{\dag}\|_{\ell_{1}}-(c_{4}\alpha-c_{5}\beta)\|F(x^{\delta}_{\alpha,\beta})-y^{\delta}\|_Y-(c_{4}\alpha-c_{5}\beta)\delta+\frac{1}{q}\|F(x^{\delta}_{\alpha,\beta})-y^{\delta}\|_Y^{q}.\label{equ64}
 \end{align}
So if $q=1$ and $1-(c_{4}\alpha-c_{5}\beta)>0$, then (\ref{equ33a}) holds. For the case $q>1$,
we apply Young's inequality $ ab\le a^q/q+b^{q^*}/q^{*}$ for $a, b\geq0$ and $q^*>1$ defined by $1/q+1/q^{*}=1$. We have
\begin{align}
(c_{4}\alpha-c_{5}\beta)\|F(x^{\delta}_{\alpha,\beta})-y^{\delta}\|_Y
&=2^{\frac{1}{q}}(c_{4}\alpha-c_{5}\beta)2^{-\frac{1}{q}}\|F(x^{\delta}_{\alpha,\beta})-y^{\delta}\|_Y\nonumber\\
&\leq\frac{1}{2q}\|F(x^{\delta}_{\alpha,\beta})-y^{\delta}\|_Y^{q}+\frac{(q-1)2^{\frac{1}{q-1}}\left(c_{4}\alpha-c_{5}\beta\right)^{\frac{q}{q-1}}}{q}.\label{equ65}
 \end{align}
A combination of (\ref{equ64}) and (\ref{equ65}) implies (\ref{equ33b}).
\end{proof}

\begin{remark} {\rm (A-priori estimation)}
Assume $\displaystyle \beta=\eta\alpha$ for a constant $\eta>0$. If $ \alpha\sim\delta^{q-1}$ with $q>1$,
then $\|x^{\delta}_{\alpha,\beta}-x^{\dag}\|_{\ell_1}\leq c\delta$ for some constant $c>0$. Obviously, then we also have
$\|x^{\delta}_{\alpha,\beta}-x^{\dag}\|_{\ell_2}\leq c\delta$. For the particular case $q=1$,
if $\alpha\sim\delta^{1-\epsilon}$ $(0<\epsilon<1)$, then
$\|x^{\delta}_{\alpha,\beta}-x^{\dag}\|_{\ell_2}\leq c\delta^{\epsilon}$ for some constant $\displaystyle c>0$.
\end{remark}

Note that we can not get the inequality (\ref{equ4}) if the restriction (\ref{nonlicondition}) is replaced by (\ref{Lipschitz}), since the term $\frac{\gamma}{2}\|x-x^{\dag}\|_{\ell_2}^2$ appears in (\ref{equ2_43}) and we can not combine the terms $\|x-x^{\dag}\|_{\ell_{1}}$ and $ \frac{\gamma}{2}\|x-x^{\dag}\|_{\ell_2}^2$. Then we can not obtain the desired convergence rate $O(\delta)$. In addition, we note that the above results on the convergence rate
hold only for the case $\alpha>\beta$. When $\alpha=\beta$, Lemmas \ref{lemma4} and \ref{lemma5} are no longer meaningful.
So the proofs of the convergence rate are invalid if $\alpha=\beta$.

\begin{theorem} {\rm (Discrepancy principle)}
Keep the assumptions of Lemma \ref{lemma5} and let $\displaystyle x^{\delta}_{\alpha,\beta}$ be
defined by \eqref{equ2_1}, where the parameters $\alpha$ and $\beta$ $(\beta=\eta\alpha)$ are chosen
via the discrepancy principle
\[\delta\leq\|F(x_{\alpha,\beta}^{\delta})-y^{\delta}\|_Y\leq \tau\delta~~(\tau\geq 1).\]
 Then
\[\|x_{\alpha,\beta}^{\delta}-x^{\dag}\|_{\ell_{2}}\leq \frac{(c_{1}-c_{2}\eta)(\tau+1)\delta}{1-\eta}.\]
\end{theorem}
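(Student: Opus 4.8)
The plan is to reproduce, almost verbatim, the discrepancy-principle argument already carried out in the $O(\delta^{1/2})$ case, but to feed the inequality of Lemma \ref{lemma5} (rather than Lemma \ref{lemma4}) into the estimate and to finish with the elementary embedding $\|\cdot\|_{\ell_2}\le\|\cdot\|_{\ell_1}$. Throughout, the constants entering the final bound are the $c_4,c_5$ produced by Lemma \ref{lemma5} (these are the quantities labelled $c_1,c_2$ in the statement).

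First I would exploit the minimality of $x^{\delta}_{\alpha,\beta}$ together with $F(x^{\dag})=y$ and $\|y^{\delta}-y\|_Y\le\delta$. Using the lower discrepancy bound $\delta\le\|F(x^{\delta}_{\alpha,\beta})-y^{\delta}\|_Y$ on the left, one obtains the chain
\[ \tfrac1q\delta^q+\mathcal{R}_{\alpha,\beta}(x^{\delta}_{\alpha,\beta})\le \tfrac1q\|F(x^{\delta}_{\alpha,\beta})-y^{\delta}\|_Y^q+\mathcal{R}_{\alpha,\beta}(x^{\delta}_{\alpha,\beta})\le \tfrac1q\|F(x^{\dag})-y^{\delta}\|_Y^q+\mathcal{R}_{\alpha,\beta}(x^{\dag})\le \tfrac1q\delta^q+\mathcal{R}_{\alpha,\beta}(x^{\dag}). \]
Cancelling $\tfrac1q\delta^q$ gives $\mathcal{R}_{\alpha,\beta}(x^{\delta}_{\alpha,\beta})\le\mathcal{R}_{\alpha,\beta}(x^{\dag})$. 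In particular $\mathcal{R}_{\alpha,\beta}(x^{\delta}_{\alpha,\beta})\le M:=\mathcal{R}_{\alpha,\beta}(x^{\dag})$, which is exactly the boundedness hypothesis required to invoke Lemma \ref{lemma5}.

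Next, applying Lemma \ref{lemma5} at $x=x^{\delta}_{\alpha,\beta}$ and discarding the now nonpositive term $\mathcal{R}_{\alpha,\beta}(x^{\delta}_{\alpha,\beta})-\mathcal{R}_{\alpha,\beta}(x^{\dag})$ yields
\[ (\alpha-\beta)\|x^{\delta}_{\alpha,\beta}-x^{\dag}\|_{\ell_1}\le (c_4\alpha-c_5\beta)\|F(x^{\delta}_{\alpha,\beta})-F(x^{\dag})\|_Y. \]
I would then control the residual by the upper discrepancy inequality and the triangle inequality: since $F(x^{\dag})=y$,
\[ \|F(x^{\delta}_{\alpha,\beta})-F(x^{\dag})\|_Y\le\|F(x^{\delta}_{\alpha,\beta})-y^{\delta}\|_Y+\|y^{\delta}-y\|_Y\le\tau\delta+\delta=(\tau+1)\delta. \]
Combining, dividing by $\alpha-\beta$, using $\|x^{\delta}_{\alpha,\beta}-x^{\dag}\|_{\ell_2}\le\|x^{\delta}_{\alpha,\beta}-x^{\dag}\|_{\ell_1}$, and substituting $\beta=\eta\alpha$ so that $\alpha-\beta=\alpha(1-\eta)$ and $c_4\alpha-c_5\beta=\alpha(c_4-c_5\eta)$, the factor $\alpha$ cancels and the stated bound $(c_4-c_5\eta)(\tau+1)\delta/(1-\eta)$ follows.

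I do not expect any genuine obstacle here once Lemma \ref{lemma5} is available: the entire argument is a short composition of minimality, the two-sided discrepancy bounds, and the norm inequality $\|\cdot\|_{\ell_2}\le\|\cdot\|_{\ell_1}$. The only two points that deserve a line of care are verifying that $\mathcal{R}_{\alpha,\beta}(x^{\delta}_{\alpha,\beta})$ is bounded (so that Lemma \ref{lemma5} applies, handled in the first step) and recalling that $c_4\alpha-c_5\beta>0$ from Lemma \ref{lemma5}, so the right-hand side is a genuine upper bound. Note that the $\alpha$-independence of the final rate is exactly what makes the discrepancy principle attractive: no a-priori coupling $\alpha\sim\delta^{q-1}$ is needed to achieve the $O(\delta)$ rate.
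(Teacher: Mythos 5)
Your proposal is correct and follows essentially the same route as the paper's own proof: minimality plus the lower discrepancy bound to get $\mathcal{R}_{\alpha,\beta}(x^{\delta}_{\alpha,\beta})\leq\mathcal{R}_{\alpha,\beta}(x^{\dag})$, then Lemma \ref{lemma5} combined with the upper discrepancy bound and the embedding $\|\cdot\|_{\ell_2}\leq\|\cdot\|_{\ell_1}$. You also correctly identify that the constants in the theorem statement should be the $c_4,c_5$ of Lemma \ref{lemma5} (the paper's proof even cites Lemma \ref{lemma4} and writes $c_1,c_2$, which are typos), and your explicit verification that $\mathcal{R}_{\alpha,\beta}(x^{\delta}_{\alpha,\beta})$ is bounded so that Lemma \ref{lemma5} applies is a small point of care the paper leaves implicit.
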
  
\begin{proof}
By the definition of $x_{\alpha,\beta}^{\delta}$, $\alpha$ and $\beta$, we see that
\begin{equation}  
\displaystyle \frac{1}{q}{\delta}^q+\mathcal{R}_{\alpha,\beta}(x_{\alpha,\beta}^{\delta})\displaystyle\leq \frac{1}{q}\|F(x_{\alpha,\beta}^{\delta})-y^{\delta}\|_Y^q+\mathcal{R}_{\alpha,\beta}(x_{\alpha,\beta}^{\delta})\leq \frac{1}{q}\|F(x^{\dag})-y^{\delta}\|_Y^q+\mathcal{R}_{\alpha,\beta}(x^{\dag}).
 \end{equation}
Hence $\mathcal{R}_{\alpha,\beta}(x_{\alpha,\beta}^{\delta})\leq \mathcal{R}_{\alpha,\beta}(x^{\dag})$. It follows from Lemma \ref{lemma4} that
\begin{align}
\displaystyle 0\geq \mathcal{R}_{\alpha,\beta}(x_{\alpha,\beta}^{\delta})-\mathcal{R}_{\alpha,\beta}(x^{\dag})&\displaystyle\geq (\alpha-\beta)\|x_{\alpha,\beta}^{\delta}-x^{\dag}\|_{\ell_{1}}-(c_{1}\alpha-c_{2}\beta)\|F(x_{\alpha,\beta}^{\delta})-F(x^{\dag})\|_Y
\displaystyle \nonumber\\& \displaystyle\geq (\alpha-\beta)\|x_{\alpha,\beta}^{\delta}-x^{\dag}\|_{\ell_{1}}-(c_{1}\alpha-c_{2}\beta)(\tau+1)\delta.
\end{align}
Then
\[ \|x_{\alpha,\beta}^{\delta}-x^{\dag}\|_{\ell_{2}}\leq \|x_{\alpha,\beta}^{\delta}-x^{\dag}\|_{\ell_{1}}\leq\frac{(c_{1}\alpha-c_{2}\beta)(\tau+1)\delta}{\alpha-\beta}.\]
The theorem is proven with $\beta=\eta\alpha$.
\end{proof}


\section{Computational approach}\label{sec4}

In this section we introduce and analyze a solution algorithm for the problem (\ref{equ1_2}) in the finite dimensional space $\mathbb{R}^n$. We propose an
iterative soft thresholding algorithm based on the generalized conditional gradient method (GCGM). We prove
the convergence of the algorithm and show that GCGM can be applied to the $\alpha\| x\|_{\ell_1}-\beta\| x\|_{\ell_2}$
$(\alpha\geq\beta\geq0)$ sparsity regularization for nonlinear inverse problems.


\subsection{Generalized conditional gradient method}\label{sec3.1}

In \cite{BBLM07,BLM09}, GCGM was proposed to solve a minimization problem for a functional $G(x)+\Phi(x)$ on a
Hilbert space $H$, where $G: H\rightarrow \mathbb{R}$ is continuously Fr\'echet differentiable and
$\Phi: H\rightarrow \mathbb{R}\cup \{\infty\}$ is proper, convex, lower semi-continuous and coercive.
In addition, GCGM has been applied to solve the classical sparsity regularization by setting
$ G(x)=\frac{1}{2}\|F(x)-y^{\delta}\|_Y^{2}-\frac{\lambda}{2}\|x\|_{\ell_2}^2$ and
$ \Phi(x)=\frac{\lambda}{2}\|x\|_{\ell_2}^2+\alpha\sum\limits_n w_n|\langle u, \phi_n\rangle|^p$ with $p\geq 1$, where
$\{w_n>0\}$ are the weights, $\{\phi_n\}$ is an orthonormal basis of $H$, and $F$ is a linear (or nonlinear) operator.
GCGM from \cite{BLM09} is stated in the form of Algorithm 1.

\begin{algorithm}\label{alg11}
\caption{Generalized conditional gradient method}
\begin{algorithmic}
\STATE{1: Choose $x^0\in H$ such that $\Phi(x^0)<+\infty$, and set $k=0$.}
\STATE{2: Determine a solution $z^k$ by solving \[\min\limits_{z \in H} \langle G^{\prime}(x^k),z\rangle+\Phi(z).\]}
\STATE{3: Set a step size $s^k$ as a solution of \[\min\limits_{s \in [0,1]} G(x^k+s(z^k-x^k))+\Phi(x^k+s(z^k-x^k)).\]}
\STATE{4: Put $x^{k+1}=x^k+s_k(z^k-x^k)$, and $k = k + 1$, return to Step 2.}
\end{algorithmic}
\end{algorithm}

We now consider applying GCGM to solve the problem (\ref{equ1_2}) in the finite dimensional space $\mathbb{R}^n$. In this section, we assume that the a nonlinear operator $F: \mathbb{R}^n\rightarrow \mathbb{R}^m$ is continuously Fr\'echet differentiable and is bounded on bounded sets. For simplicity, we only consider the case $q=2$ in (\ref{equ1_2}). Since the term $\displaystyle \alpha\| x\|_{\ell_1}-\beta\| x\|_{\ell_2}$ is not convex, a property required by GCGM, we rewrite $\mathcal{J}_{\alpha,\beta}^{\delta}(x)$ in (\ref{equ1_2}) in the finite dimensional space $\mathbb{R}^n$ as
\begin{equation}\label{equ4_1}
\mathcal{J}_{\alpha,\beta}^{\delta}(x)=G(x)+\Phi(x),
\end{equation}
where
\[ \displaystyle G(x)=\frac{1}{2}\| F(x)-y^{\delta}\|_{\ell_2}^2-\Theta(x),\quad\Phi(x)=\Theta(x)+\alpha\|x\|_{\ell_1}-\beta\| x\|_{\ell_2},\]
and $\Theta(x)=\frac{\lambda}{2}\|x\|_{\ell_2}^2+\beta\|x\|_{\ell_2}$, $\lambda>0$. Thus, the problem (\ref{equ1_2}) can be expressed as
\begin{equation}\label{equ3_2}
\min\limits_x \mathcal{J}_{\alpha,\beta}^{\delta}(x)=G(x)+\Phi(x).
\end{equation}
It is clear that
$\Phi(x)=\alpha\|x\|_{\ell_1}+\frac{\lambda}{2}\|x\|_{\ell_2}^2$ is proper, convex, lower semi-continuous and coercive in $\ell_2$. Unfortunately, $ G(x)=\frac{1}{2}\| F(x)-y^{\delta}\|_Y^2-\left(\frac{\lambda}{2}\|x\|_{\ell_2}^2+\beta\|x\|_{\ell_2}\right)$ is not continuously Fr\'echet differentiable at $x=0$. So $G$ fails to fulfill the smoothness condition required by GCGM. Thus, the main difficulty in carrying out the minimization is how to impose the restriction on the smoothness of $G$. In this paper, we prove the convergence under a weaker condition, namely, $G'$ is continuous only on a closed set $S$, where $0\notin S$. We propose a numerical algorithm which is divided into two steps. In the first step, i.e.\ when $x^k=0$, the minimization problem is solved by the classical $\ell_1$ sparsity regularization. In the second step, i.e.\ when $x^k\neq 0$, the minimization of (\ref{equ3_2}) is solved by GCGM. We call it ST-$({\alpha \ell_1-\beta \ell_2})$ algorithm which is summarized in Algorithm 2.

\begin{algorithm}\label{alg2}
\caption{ST-$({\alpha \ell_1-\beta \ell_2})$ algorithm for problem (\ref{equ1_2}) in the finite dimensional space $\mathbb{R}^n$}
\begin{algorithmic}
\STATE{Choose $x^0\in \mathbb{R}^n$ such that $\Phi(x^0)<+\infty$, and set $k=0$.}
\STATE{for $k$ = 0, 1, 2, $\cdots$ do}
\STATE{~~~~If $x^k=0$ then}
\STATE{~~~~~~$\displaystyle x^{k+1}=\mathbb{S}_{\alpha/\lambda}\left(x^k-\frac{1}{\lambda}F'(x^k)^*(F(x^k)-y^{\delta})\right)$\quad (by the classical iterative soft thresholding algorithm)}
\STATE{~~~~else}
\STATE{~~~~~~Determine a solution $z^k$ by sloving \[\min\limits_{z \in \mathbb{R}^n} \langle G^{\prime}(x^k),z\rangle+\Phi(z).\]}
\STATE{~~~~~~Set a step size $s^k$ as a solution of \[\min\limits_{s \in [0,1]} G(x^k+s(z^k-x^k))+\Phi(x^k+s(z^k-x^k)).\]}
\STATE{~~~~~~$x^{k+1}=x^k+s^k(z^k-x^k)$}
\STATE{~~~~end if}
\STATE{~~~~$k=k+1$}
\STATE{end for}
\end{algorithmic}
\end{algorithm}


\subsection{Convergence analysis}

First, we recall two results proved in \cite{BLM09}.

\begin{lemma}\label{lemma3_1}
Let $G:\ell_2 \rightarrow \mathbb{R}$ denote a G\^{a}teaux-differentiable functional and let $\Phi: \ell_2\rightarrow \mathbb{R}$ be proper, convex, lower semi-continuous and coercive. Then, the first order necessary condition for optimality
in \eqref{equ3_2} is
\begin{equation}\label{equ3_3}
\displaystyle
x\in \ell_2:\quad \langle G'(x),y-x\rangle\geq \Phi(x)-\Phi(y) \quad for~all\quad y\in \ell_2.
 \end{equation}
This condition is equivalent to
\begin{equation}\label{equ3_4}
\displaystyle
\langle G'(x),x\rangle+\Phi(x)= \min\limits_{y\in\ell_2}(\langle G'(x),y\rangle+\Phi(y)).
 \end{equation}
\end{lemma}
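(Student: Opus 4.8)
The plan is to establish the two displayed conditions in sequence: first I would show that any minimizer of \eqref{equ3_2} satisfies the variational inequality \eqref{equ3_3}, and then I would observe that \eqref{equ3_3} and \eqref{equ3_4} are algebraically equivalent, so that no separate argument is needed for the second statement. No convexity of $G$ is required for necessity; only the G\^ateaux differentiability of $G$ and the convexity of $\Phi$ enter.

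For the first part, let $x$ be a minimizer of $\mathcal{J}_{\alpha,\beta}^{\delta}=G+\Phi$. Fix an arbitrary $y\in\ell_2$; if $\Phi(y)=+\infty$ the inequality \eqref{equ3_3} holds trivially, so I would assume $\Phi(y)<+\infty$. For $t\in(0,1]$ set $x_t=x+t(y-x)=(1-t)\,x+t\,y$. Minimality gives $G(x_t)+\Phi(x_t)\ge G(x)+\Phi(x)$, while convexity of $\Phi$ yields $\Phi(x_t)\le(1-t)\,\Phi(x)+t\,\Phi(y)$. Combining the two and cancelling terms produces the one-sided estimate $G(x_t)-G(x)\ge t\,[\Phi(x)-\Phi(y)]$. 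Dividing by $t>0$ and letting $t\to 0^{+}$, the left-hand difference quotient converges, by G\^ateaux differentiability of $G$, to the directional derivative $\langle G'(x),y-x\rangle$, which gives exactly \eqref{equ3_3}.

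For the equivalence I would simply rewrite \eqref{equ3_3} as $\langle G'(x),y\rangle+\Phi(y)\ge\langle G'(x),x\rangle+\Phi(x)$ for all $y\in\ell_2$. Since equality holds at $y=x$, this says precisely that $y=x$ minimizes the map $y\mapsto\langle G'(x),y\rangle+\Phi(y)$, i.e. \eqref{equ3_4}; conversely, subtracting $\langle G'(x),x\rangle$ from \eqref{equ3_4} returns \eqref{equ3_3}. Thus the two conditions are literally reformulations of one another.

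The only delicate point is the passage to the limit in the first part: because $\Phi$ need not be differentiable, one cannot differentiate $\mathcal{J}_{\alpha,\beta}^{\delta}$ directly, and the argument relies essentially on using convexity of $\Phi$ to \emph{upper-bound} the increment $\Phi(x_t)$, so that after dividing by $t$ the surviving inequality involves only the smooth part $G$. I would also note that properness of $\Phi$ guarantees $\Phi(x)<+\infty$ at the minimizer, so every quantity appearing in the chain of inequalities is finite and the limit is legitimate.
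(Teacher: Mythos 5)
Your proof is correct. The paper does not prove this lemma itself; it simply recalls it from the reference [BLM09], and your argument is the standard one used there: perturb the minimizer along the segment $x_t=(1-t)x+ty$, use convexity of $\Phi$ to upper-bound $\Phi(x_t)$ so that only the G\^ateaux-differentiable part $G$ survives the division by $t$, and then observe that \eqref{equ3_3} and \eqref{equ3_4} are algebraic restatements of one another. Your handling of the case $\Phi(y)=+\infty$ and the finiteness of $\Phi(x)$ at the minimizer is the right level of care.
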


\begin{lemma}\label{lemma3_2}
Let $\Phi$ be proper, convex, lower semi-continuous and coercive, and let $F$ be continuously Fr\'{e}chet differentiable.
Then
\begin{equation}\label{equ3_5}
 \begin{array}{llc}
\displaystyle
\Psi(x):=\langle G'(x),x\rangle+\Phi(x)-\min\limits_{y\in\ell_2}(\langle G'(x),y\rangle+\Phi(y))
 \end{array}
 \end{equation}
 is lower semi-continuous.
\end{lemma}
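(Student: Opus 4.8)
The plan is to split $\Psi$ into an obviously lower semi-continuous part and a part whose lower semi-continuity reduces to an \emph{upper} semi-continuity statement about a parametrized minimum. Writing
\[ m(x):=\min_{y\in\ell_2}\big(\langle G'(x),y\rangle+\Phi(y)\big), \]
we have $\Psi(x)=\big(\langle G'(x),x\rangle+\Phi(x)\big)-m(x)$. Since a sum of two lower semi-continuous functionals is again lower semi-continuous, it suffices to prove that $x\mapsto\langle G'(x),x\rangle+\Phi(x)$ is lower semi-continuous and, separately, that $-m$ is lower semi-continuous.

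First I would dispose of the bracketed term. Because $G'$ is continuous (as $G$ is continuously Fr\'echet differentiable), for $x_n\to x$ the identity $\langle G'(x_n),x_n\rangle-\langle G'(x),x\rangle=\langle G'(x_n)-G'(x),x_n\rangle+\langle G'(x),x_n-x\rangle$, together with the boundedness of $\{x_n\}$, shows that $x\mapsto\langle G'(x),x\rangle$ is in fact continuous. Adding the lower semi-continuous $\Phi$ keeps the sum lower semi-continuous.

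The crux is the upper semi-continuity of $m$, equivalently the lower semi-continuity of $-m$. Here I would exploit that $m$ is an infimum of a nicely behaved family: for each fixed $y\in\ell_2$ the function $x\mapsto\langle G'(x),y\rangle+\Phi(y)$ is continuous, again by continuity of $G'$. Since $m$ is the pointwise infimum over $y$ of this family of continuous (hence upper semi-continuous) functions, and an infimum of upper semi-continuous functions is upper semi-continuous, $m$ is upper semi-continuous, so $-m$ is lower semi-continuous. Combining the two parts gives the claim.

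The hard part will not be the semicontinuity bookkeeping but the preliminary check that $m(x)$ is well defined and finite, so that the ``$\min$'' is legitimate and the infimum is attained. This forces one to verify that $y\mapsto\langle G'(x),y\rangle+\Phi(y)$ is bounded below and coercive: the linear term obeys $\langle G'(x),y\rangle\ge-\|G'(x)\|\,\|y\|_{\ell_2}$, and in the present setting $\Phi(y)=\alpha\|y\|_{\ell_1}+\tfrac{\lambda}{2}\|y\|_{\ell_2}^2$ grows quadratically, hence dominates this linear term and renders the objective super-coercive; coercivity together with convexity and lower semi-continuity of $\Phi$ then yields a minimizer by the direct method. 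A secondary point to keep straight is that all the continuity statements above are with respect to the strong topology of $\ell_2$, which is precisely the topology in which lower semi-continuity of $\Psi$ is later needed.
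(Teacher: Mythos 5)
Your argument is correct and is essentially the standard proof of this lemma (the paper itself does not prove it but recalls it from \cite{BLM09}, where the argument is to write $\Psi(x)=\sup_{y}\left(\langle G'(x),x-y\rangle+\Phi(x)-\Phi(y)\right)$ and invoke that a supremum of lower semi-continuous functions is lower semi-continuous). Your decomposition into the lsc part $\langle G'(x),x\rangle+\Phi(x)$ plus $-m(x)=\sup_y\left(-\langle G'(x),y\rangle-\Phi(y)\right)$ is the same idea, and your preliminary checks (finiteness and attainment of the minimum via coercivity, and continuity of $x\mapsto\langle G'(x),x\rangle$) are sound.
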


Next, we show that $\mathcal{J}_{\alpha,\beta}^{\delta}(x^k)$ decreases with respect to $k$, where $\{x^k\}$ is generated by Algorithm 2.

\begin{lemma}\label{lemma7}
Denote by $\{x^k\}$ the sequence generated by Algorithm 2. If $x^k$ fails to satisfy the first order optimality condition \eqref{equ3_3}, then $\mathcal{J}_{\alpha,\beta}^{\delta}(x^{k+1})\leq\mathcal{J}_{\alpha,\beta}^{\delta}(x^k).$
\end{lemma}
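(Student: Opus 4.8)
The plan is to split the analysis according to the two branches of Algorithm 2, since the update rule differs depending on whether $x^k=0$ or $x^k\neq 0$.

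I would first dispose of the branch $x^k\neq 0$. Away from the origin the term $\beta\|x\|_{\ell_2}$ is differentiable, so $G$ is G\^{a}teaux-differentiable at $x^k$ and the GCGM substeps are well defined. The decrease then follows purely from the optimality of the line search: since $s^k$ minimizes $s\mapsto G(x^k+s(z^k-x^k))+\Phi(x^k+s(z^k-x^k))$ over $s\in[0,1]$ and the choice $s=0$ returns $x^k$, we obtain
\[
\mathcal{J}_{\alpha,\beta}^{\delta}(x^{k+1})=\min_{s\in[0,1]}\Big(G(x^k+s(z^k-x^k))+\Phi(x^k+s(z^k-x^k))\Big)\le G(x^k)+\Phi(x^k)=\mathcal{J}_{\alpha,\beta}^{\delta}(x^k).
\]
No use of the hypothesis that $x^k$ violates \eqref{equ3_3} is needed for this inequality; that hypothesis is what guarantees a strictly decreasing, nontrivial step, which is stronger than the stated claim.

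The branch $x^k=0$, where $G'$ does not exist and $x^{k+1}$ is produced by the soft-thresholding formula, requires more care, and here I would compare $\mathcal{J}_{\alpha,\beta}^{\delta}$ with the auxiliary $\ell_1$-Tikhonov functional $T(x):=\tfrac12\|F(x)-y^{\delta}\|_Y^2+\alpha\|x\|_{\ell_1}$. Two elementary facts reduce the problem: because $\beta\|x\|_{\ell_2}\ge 0$ we have $\mathcal{J}_{\alpha,\beta}^{\delta}(x)=T(x)-\beta\|x\|_{\ell_2}\le T(x)$ for every $x$, while at the current iterate $\mathcal{J}_{\alpha,\beta}^{\delta}(0)=T(0)$. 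Thus it suffices to prove the one-step descent $T(x^{k+1})\le T(0)$ for the classical iterative soft-thresholding update. I would realize $x^{k+1}=\mathbb{S}_{\alpha/\lambda}\big(-\tfrac1\lambda F'(0)^*(F(0)-y^{\delta})\big)$ as the exact minimizer of the surrogate functional
\[
Q(x):=\tfrac12\|F(0)-y^{\delta}\|_Y^2+\langle F'(0)^*(F(0)-y^{\delta}),x\rangle+\tfrac{\lambda}{2}\|x\|_{\ell_2}^2+\alpha\|x\|_{\ell_1},
\]
so that $Q(x^{k+1})\le Q(0)=T(0)$, and then pass from $Q$ back to $T$ by the majorization inequality $T(x^{k+1})\le Q(x^{k+1})$. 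Chaining these gives $\mathcal{J}_{\alpha,\beta}^{\delta}(x^{k+1})\le T(x^{k+1})\le Q(x^{k+1})\le Q(0)=T(0)=\mathcal{J}_{\alpha,\beta}^{\delta}(x^k)$.

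The crux of the argument is the majorization $T(x^{k+1})\le Q(x^{k+1})$, which is equivalent to the descent estimate
\[
\tfrac12\|F(x^{k+1})-y^{\delta}\|_Y^2\le\tfrac12\|F(0)-y^{\delta}\|_Y^2+\langle F'(0)^*(F(0)-y^{\delta}),x^{k+1}\rangle+\tfrac{\lambda}{2}\|x^{k+1}\|_{\ell_2}^2
\]
for $f(x):=\tfrac12\|F(x)-y^{\delta}\|_Y^2$. This holds only when $\lambda$ dominates the Lipschitz constant of $\nabla f$ along the segment joining $0$ and $x^{k+1}$. Since we work in $\mathbb{R}^n$ with $F\in C^1$ bounded on bounded sets, $\nabla f$ is locally Lipschitz, so the estimate can be secured by requiring $\lambda$ sufficiently large; I would isolate this as the standing condition on $\lambda$, the nonlinear analogue of the classical requirement $\lambda\ge\|K\|^2$ on the forward operator in the linear ISTA theory, and verify the estimate by a Taylor expansion of $f$ controlled by the local Lipschitz modulus of $F'$.
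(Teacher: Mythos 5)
Your proof is correct and follows essentially the same route as the paper's: the same case split on $x^k=0$ versus $x^k\neq 0$, and for $x^k=0$ the same chain of inequalities, since the paper's explicit step $-\beta\|x^{k+1}\|_{\ell_2}\leq-\beta\|0\|_{\ell_2}$ is just your observation that $\mathcal{J}_{\alpha,\beta}^{\delta}\leq T$ with equality at $0$, where $T(x)=\tfrac12\|F(x)-y^{\delta}\|^2+\alpha\|x\|_{\ell_1}$. The one substantive difference is at the crux you correctly isolate: the paper simply \emph{asserts} the inequality $T(x^{k+1})\leq T(0)$ for the soft-thresholding step (it is the first inequality in the displayed estimate, written without justification), whereas you derive it from the surrogate functional $Q$ and note that the majorization $T(x^{k+1})\leq Q(x^{k+1})$ is not free: it requires $\lambda$ to dominate the local Lipschitz modulus of the gradient of $x\mapsto\tfrac12\|F(x)-y^{\delta}\|^2$ on the segment from $0$ to $x^{k+1}$. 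That condition appears nowhere as a hypothesis of the lemma in the paper (it surfaces only empirically in Section \ref{sec5}, where the authors report divergence unless $A$ is rescaled and $\lambda$ lies in a suitable range), so your version is the more complete one; just promote the largeness condition on $\lambda$ to an explicit standing assumption rather than leaving it as a remark, since without it the claimed descent can genuinely fail. For the branch $x^k\neq0$, your $s=0$ line-search argument suffices for the non-strict inequality as stated; the paper instead defers to Lemma 2 of the GCGM reference, which establishes the stronger quantitative decrease in terms of $\Psi(x^k)$ that is needed later in Lemma \ref{lemma3_8}, and you are right that the hypothesis that $x^k$ violates \eqref{equ3_3} plays no role in the inequality actually asserted.
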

\begin{proof}
If $x^k=0$, by Algorithm 2, we have
\begin{align*}
\mathcal{J}_{\alpha,\beta}^{\delta}(x^{k+1})
&=G(x^{k+1})+\Phi(x^{k+1})=\frac{1}{2}\|F(x^{k+1})-y^{\delta}\|_{\ell_2}^2+\alpha\|x^{k+1}\|_{\ell_1}-\beta\|x^{k+1}\|_{\ell_2}\\
&\leq \frac{1}{2}\| F(0)-y^{\delta}\|_{\ell_2}^{2}+\alpha\| 0\|_{\ell_1}-\beta\| x^{k+1}\|_{\ell_2}\leq \frac{1}{2}\| F(0)-y^{\delta}\|_{\ell_2}^{2}+\alpha\| 0\|_{\ell_1}-\beta\| 0\|_{\ell_2}\\
& = \mathcal{J}_{\alpha,\beta}^{\delta}(x^k).\label{equ76}
\end{align*}
If $x^k\neq 0$, $G$ is Fr\'echet differentiable at $x^k$ and the rest of the proof is similar to that
of Lemma 2 in \cite{BLM09}.
\end{proof}

\begin{remark}\label{remark4}
Note that if $0=x^0=x^1$, we stop the iteration and 0 is inversion solution. Otherwise, by Lemma \ref{lemma7}, we see that
\[\mathcal{J}_{\alpha,\beta}^{\delta}(x^{1})-\mathcal{J}_{\alpha,\beta}^{\delta}(x^0)\leq -\beta\|x^1\|_{\ell_2}<0.\]
Since $\mathcal{J}_{\alpha,\beta}^{\delta}(x^k)$ decreases, $x^k\neq0$ for $k\ge1$. So we let $x^k\ne0$ whenever $k\ge1$.
\end{remark}

\begin{remark}\label{remark5}
By Lemma \ref{lemma7}, $\{\|x^k\|_{\ell_2}\}$ is bounded. Let us show that $\inf\|x^k\|_{\ell_2}>0$.
By Remark \ref{remark4}, $x^k \neq 0$. If $\inf\|x^k\|_{\ell_2}=0$, then
there exists a subsequence of $\{x^{k}\}$, still denoted by $\{x^{k}\}$, such that $\lim_{k\rightarrow \infty} x^k=0$.
Since $\mathcal{J}_{\alpha,\beta}^{\delta}(x)$ is weakly lower semi-continuous,
\[\liminf\limits_k\mathcal{J}_{\alpha,\beta}^{\delta}(x^k)\geq \mathcal{J}_{\alpha,\beta}^{\delta}(0).\]
Meanwhile, by Lemma \ref{lemma7}, we have
\[\liminf\limits_k\mathcal{J}_{\alpha,\beta}^{\delta}(x^k)\leq \mathcal{J}_{\alpha,\beta}^{\delta}(0).\]
So $\liminf\limits_k\mathcal{J}_{\alpha,\beta}^{\delta}(x^k)= \mathcal{J}_{\alpha,\beta}^{\delta}(0)$ for all $k\in \mathbb{N}$.
It means that $\mathcal{J}_{\alpha,\beta}^{\delta}(x^k)$ does not decrease, then 0 is the iterative solution.
So $c:=\inf\|x^k\|_{\ell_2}>0$. Thus, $\|x^k\|_{\ell_2}\geq c>0$ for any $k$.
\end{remark}

\begin{lemma}\label{lemma3_5}
Denote by $\{x^k\}$ the sequence generated by Algorithm 2 and $S:=\{x\in \ell_2 \mid 0<\inf\|x^k\|_{\ell_2}\leq\|x\|_{\ell_2}\leq C\}$, where $C$ is an upper bound of $\{\|x^k\|_{\ell_2}\}$. Then $G'$ is uniformly continuous on $S$.
\end{lemma}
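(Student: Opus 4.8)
The plan is to prove uniform continuity by producing, directly and by hand, a modulus of continuity for the increment $\|G'(x)-G'(\tilde x)\|_{\ell_2}$ that is uniform over $S$, estimating $G'$ term by term. The point to keep in mind is that $S=\{x\mid c\le\|x\|_{\ell_2}\le C\}$ with $c:=\inf_k\|x^k\|_{\ell_2}>0$ (guaranteed by Remark \ref{remark5}) is closed and bounded but \emph{not} compact in the infinite-dimensional space $\ell_2$, so the Heine--Cantor route is unavailable and every estimate must be genuinely uniform. Because $\|x\|_{\ell_2}\ge c>0$ on $S$, the term $\beta\|x\|_{\ell_2}$ is smooth there, so $\Theta$ and hence $G$ are Fr\'echet differentiable throughout $S$, with
\[ G'(x)=F'(x)^*(F(x)-y^{\delta})-\lambda x-\beta\,\frac{x}{\|x\|_{\ell_2}}. \]

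First I would dispose of the two elementary terms, neither of which needs any compactness. The map $x\mapsto-\lambda x$ is globally Lipschitz with constant $\lambda$. For the normalization term I would use the algebraic identity
\[ \frac{x}{\|x\|_{\ell_2}}-\frac{\tilde x}{\|\tilde x\|_{\ell_2}}=\frac{x-\tilde x}{\|x\|_{\ell_2}}+\tilde x\,\frac{\|\tilde x\|_{\ell_2}-\|x\|_{\ell_2}}{\|x\|_{\ell_2}\,\|\tilde x\|_{\ell_2}}, \]
and then the reverse triangle inequality together with $\|x\|_{\ell_2}\ge c$ to obtain the Lipschitz bound $\|x/\|x\|_{\ell_2}-\tilde x/\|\tilde x\|_{\ell_2}\|_{\ell_2}\le (2/c)\|x-\tilde x\|_{\ell_2}$, so this term is Lipschitz on $S$ with constant $2\beta/c$.

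The remaining and decisive term is $\Xi(x):=F'(x)^*(F(x)-y^{\delta})$, which I would handle by splitting the increment as
\[ \Xi(x)-\Xi(\tilde x)=\bigl(F'(x)^*-F'(\tilde x)^*\bigr)(F(x)-y^{\delta})+F'(\tilde x)^*\bigl(F(x)-F(\tilde x)\bigr). \]
Since $S$ is bounded and $F$ is bounded on bounded sets, $\|F(x)-y^{\delta}\|_Y$ is bounded on $S$ by some $R$; since $F'$ is bounded on the ball $\{\|z\|_{\ell_2}\le C\}$ (which, by convexity, contains the whole segment $[\tilde x,x]$ for $x,\tilde x\in S$), the mean value inequality gives $\|F(x)-F(\tilde x)\|_Y\le L\,\|x-\tilde x\|_{\ell_2}$ with $L:=\sup_{\|z\|_{\ell_2}\le C}\|F'(z)\|$, so the second piece is Lipschitz. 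The first piece is bounded by $R\,\|F'(x)^*-F'(\tilde x)^*\|=R\,\|F'(x)-F'(\tilde x)\|$ in operator norm.

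The hard part is therefore reduced to a single ingredient: the uniform continuity of $x\mapsto F'(x)$ on the bounded set $S$, i.e.\ a modulus $\omega$ with $\omega(t)\to0$ as $t\to0$ and $\|F'(x)-F'(\tilde x)\|\le\omega(\|x-\tilde x\|_{\ell_2})$ uniformly for $x,\tilde x\in S$. This is the only step that is not elementary algebra, and it is exactly where the standing hypotheses on $F$ must be invoked: I would phrase the argument so that it uses only uniform continuity of $F'$ on bounded subsets (a consequence of $F$ being continuously Fr\'echet differentiable and bounded on bounded sets in the finite-dimensional realization $F:\mathbb{R}^n\to\mathbb{R}^m$ of this section), rather than any appeal to compactness of $S$. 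Feeding $R\,\omega(\cdot)$ together with the two Lipschitz constants $\lambda$ and $2\beta/c$ (and the Lipschitz constant of the second piece) into the triangle inequality produces one uniform modulus of continuity for $G'$ on $S$, which establishes the claim.
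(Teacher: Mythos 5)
Your proof is correct in the setting where this lemma actually lives, and its core is the same decomposition the paper uses: the paper likewise bounds the increment of $G'$ by the four terms $\|F(x)\|\,\|F'(x)-F'(y)\|$, $\|F'(x)\|\,\|F(x)-F(y)\|$, $\|\Theta'(x)-\Theta'(y)\|$ and $\|y^{\delta}\|\,\|F'(x)-F'(y)\|$, deduces continuity of $G'$ from the continuous differentiability of $F$ and $\Theta$ and the boundedness of $F$, and then concludes uniform continuity from the fact that $S$ is closed (and bounded, hence compact). Where you genuinely differ is the last step, and there your framing is self-undermining. Despite the $\ell_2$ notation in the statement, Section \ref{sec4} and Algorithm 2 are explicitly posed in $\mathbb{R}^n$ (the paper's own proof works with $L(\mathbb{R}^n,\mathbb{R}^m)$ norms), so $S$ \emph{is} compact and the Heine--Cantor route you declare ``unavailable'' is exactly the one the paper legitimately takes. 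Moreover, your argument does not actually escape it: the single nontrivial ingredient you need, a uniform modulus $\omega$ for $F'$ on the ball $\{\|z\|_{\ell_2}\le C\}$, is justified in your final sentence precisely by the finite-dimensional realization, i.e.\ by Heine--Cantor applied to the continuous map $F'$ on a compact ball (likewise the finiteness of $L=\sup_{\|z\|_{\ell_2}\le C}\|F'(z)\|$). In genuinely infinite-dimensional $\ell_2$, continuous Fr\'echet differentiability plus boundedness of $F$ on bounded sets does \emph{not} imply uniform continuity of $F'$ on bounded sets, so your version would not extend the lemma beyond the paper's; it would instead have to assume that uniform continuity outright. What your route does buy is explicitness and the repair of two sloppy points in the paper: you produce quantitative Lipschitz constants $\lambda$ and $2\beta/c$ (your algebraic identity and reverse-triangle estimate for the normalization term are correct, as is the use of the convex ball rather than the non-convex annulus $S$ for the mean value inequality), and in doing so you make visible why the lower bound $\|x\|_{\ell_2}\ge c=\inf_k\|x^k\|_{\ell_2}>0$ from Remark \ref{remark5} is indispensable --- $\Theta'$, containing $\beta x/\|x\|_{\ell_2}$, is not uniformly continuous near the origin, which the paper's terse appeal to the differentiability of $\Theta$ glosses over; you also implicitly correct the paper's closing line, since closedness of $S$ alone would not suffice even in $\mathbb{R}^n$ without the boundedness your constants make explicit.
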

\begin{proof}
By the definition of $G$, we see that $G$ is Fr\'echet differentiable and
\[ G'(x)h=\langle F'(x)^{*}(F(x)-y^{\delta}),h\rangle-\Theta'(x)h. \]
Then, for all $x, y\in S$,
\begin{align*}
\|G'(x)-G'(y)\|_{L(\mathbb{R}^n,\mathbb{R})}&\le \|F(x)\|_{\ell_2}\|F'(x)-F'(y)\|_{L(\mathbb{R}^n,\mathbb{R}^m)}+\|F'(x)\|_{L(\mathbb{R}^n,\mathbb{R}^m)}\|F(x)-F(y)\|_{\ell_2}\\
&\quad{}+\|\Theta'(x)-\Theta'(y)\|_{\ell_2}+\|y^{\delta}\|_{\ell_2}\|F'(x)-F'(y)\|_{L(\mathbb{R}^n,\mathbb{R}^m)}.
\end{align*}
The continuity of $G'$ follows from the continuous Fr\'echet differentiability of $F$ and $\Theta$ and
the boundedness of $F$ on $S$. Since $S$ is closed, $G'$ is uniform continuous on $S$.
\end{proof}


\begin{lemma}\label{lemma3_8}
Denote by $\{x^k\}$ the sequence generated by Algorithm 2.
Then $\lim\limits_k\Psi(x^k)=0$.
\end{lemma}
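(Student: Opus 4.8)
The plan is to couple the monotone decrease of $\mathcal{J}_{\alpha,\beta}^{\delta}$ along the iterates (Lemma \ref{lemma7}) with a quantitative descent estimate phrased through the gap functional $\Psi$. First I would note that $\Psi(x^k)\ge 0$ for every $k$, since $x^k$ is admissible in the minimization defining $\Psi$ in \eqref{equ3_5}. Next, because $\alpha\ge\beta$ and $\|x\|_{\ell_1}\ge\|x\|_{\ell_2}$ give
\[\mathcal{J}_{\alpha,\beta}^{\delta}(x)\ge\tfrac12\|F(x)-y^{\delta}\|_{\ell_2}^2+\beta(\|x\|_{\ell_1}-\|x\|_{\ell_2})\ge 0,\]
the decreasing sequence $\{\mathcal{J}_{\alpha,\beta}^{\delta}(x^k)\}$ (Lemma \ref{lemma7}) is bounded below and therefore convergent, so the successive decrements satisfy $\mathcal{J}_{\alpha,\beta}^{\delta}(x^k)-\mathcal{J}_{\alpha,\beta}^{\delta}(x^{k+1})\to 0$. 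The whole argument then reduces to showing that a persistently positive gap would force these decrements to stay bounded away from $0$.

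To build the descent estimate I would first secure uniform bounds. By Remark \ref{remark5} we have $0<c\le\|x^k\|_{\ell_2}\le C$, and the minimizer $z^k$ of $\langle G'(x^k),z\rangle+\Phi(z)$ is uniformly bounded, because $\Phi$ is coercive while $\|G'(x^k)\|$ is uniformly bounded on $S$ (Lemma \ref{lemma3_5}); hence $\|d^k\|_{\ell_2}\le D$ with $d^k:=z^k-x^k$ for a fixed $D>0$. Writing $x^k+s\,d^k=(1-s)x^k+s\,z^k$, using convexity of $\Phi$ together with the identity
\[\langle G'(x^k),d^k\rangle+\Phi(z^k)-\Phi(x^k)=-\Psi(x^k),\]
the line-search property $\mathcal{J}_{\alpha,\beta}^{\delta}(x^{k+1})\le\mathcal{J}_{\alpha,\beta}^{\delta}(x^k+s\,d^k)$, and the fundamental theorem of calculus for $G$, I would obtain, for every admissible step $s$,
\[\mathcal{J}_{\alpha,\beta}^{\delta}(x^{k+1})-\mathcal{J}_{\alpha,\beta}^{\delta}(x^k)\le -s\,\Psi(x^k)+s\,\omega(sD)\,D,\]
where $\omega$ denotes the modulus of uniform continuity of $G'$.

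The main obstacle is that $G$ is not differentiable at the origin, so I cannot invoke a global Lipschitz gradient bound and must instead keep the line segment away from $0$. I would do this by restricting the step to $s\le s_0:=c/(2D)$, which gives $c/2\le c-s_0D\le\|x^k+s\,d^k\|_{\ell_2}\le C+D$, so that the whole segment lies in the closed set $S':=\{x\mid c/2\le\|x\|_{\ell_2}\le C+D\}$; by the argument of Lemma \ref{lemma3_5} the derivative $G'$ is uniformly continuous on $S'$, which legitimizes the integral estimate above together with its modulus $\omega$. This replacement of the usual quadratic descent lemma by an estimate through $\omega$ is the only delicate point, and it suffices precisely because $\omega(s_0D)\to 0$ as $s_0\to 0$.

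Finally I would argue by contradiction. If $\Psi(x^k)\not\to 0$, fix $\varepsilon>0$ and a subsequence with $\Psi(x^{k_j})\ge\varepsilon$; shrinking $s_0$ (still $\le c/(2D)$) so that $\omega(s_0D)\,D\le\varepsilon/2$, the displayed estimate yields
\[\mathcal{J}_{\alpha,\beta}^{\delta}(x^{k_j})-\mathcal{J}_{\alpha,\beta}^{\delta}(x^{k_j+1})\ge s_0\varepsilon-s_0\frac{\varepsilon}{2}=\frac{s_0\varepsilon}{2}>0\]
for all $j$, contradicting $\mathcal{J}_{\alpha,\beta}^{\delta}(x^k)-\mathcal{J}_{\alpha,\beta}^{\delta}(x^{k+1})\to 0$. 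Hence $\lim_k\Psi(x^k)=0$.
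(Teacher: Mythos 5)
Your proposal follows essentially the same route as the paper's proof: the same descent estimate $\mathcal{J}_{\alpha,\beta}^{\delta}(x^{k+1})-\mathcal{J}_{\alpha,\beta}^{\delta}(x^k)\le -s\,\Psi(x^k)+s\,\langle G'(x^k+ts(z^k-x^k))-G'(x^k),\,z^k-x^k\rangle$ derived from the line search, convexity of $\Phi$, and the mean value theorem, combined with uniform boundedness of $z^k$, uniform continuity of $G'$, and convergence of the monotone sequence $\{\mathcal{J}_{\alpha,\beta}^{\delta}(x^k)\}$. If anything you are slightly more careful than the paper, since you explicitly restrict the step size so that the whole segment $x^k+s(z^k-x^k)$ remains in a closed set bounded away from the origin (where $G'$ is undefined) before invoking uniform continuity, a point the paper's proof passes over when it applies uniform continuity on $S$ to points of the form $x^k+ts(z^k-x^k)$.
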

\begin{proof}
Let $S$ be as in Lemma \ref{lemma3_5}. By Lemma \ref{lemma3_5}, $G'$ is uniformly continuous on $S$; in particular,
$G'$ is bounded on $S$. Thus, $\{G'(x^k)\}$ is bounded.  Meanwhile, the direction $z^k$ is a solution of
\[\min\limits_{z \in \mathbb{R}^n} \langle G'(x^k),z\rangle+\Phi(z);\]
then $\{\|z^k\|_{\ell_2}\}$ is bounded. The minimizing property of the line-search for $s$ and the
intermediate value theorem imply that
\begin{align}
&(G+\Phi)(x^{k+1})-(G+\Phi)(x^{k})\nonumber\\
&\qquad \leq G(x^k+s(z^k-x^k))-G(x^k)+\Phi(x^k+s(z^k-x^k))-\Phi(x^k)\nonumber\\
&\qquad \leq G(x^k+s(z^k-x^k))-G(x^k)+s(\Phi(z^k)-\Phi(x^k))\nonumber\\
&\qquad \leq -s\Psi(x^k)+s\langle G'(x^k+ts(z^k-x^k))-G'(x^k), z^k-x^k\rangle, \label{equ3_6}
\end{align}
where $t\in[0,1]$ and $\Psi$ is defined in (\ref{equ3_5}).  Since $\{\|z^k\|_{\ell_2}\}$ and $\{\|x^k\|_{\ell_2}\}$
are bounded, there exist $c_1>0$, $c_2>0$ such that $\|z^k\|_{\ell_2}\leq c_1$ and $\|x^k\|_{\ell_2}\leq c_2$.
By (\ref{equ3_6}), we obtain
\[
\Psi(x^k)\le\frac{(G+\Phi)(x^k)-(G+\Phi)(x^{k+1})}{s}+(c_1+c_2)\parallel G'(x^k+ts(z^k-x^k))-G'(x^k)\parallel_{L(\mathbb{R}^n,R)}.
\]
Since $G'$ is uniformly continuous on $S$, for small enough $s$,
\[\|G'(x^k+ts(z^k-x^k))-G'(x^k)\|_{L(\mathbb{R}^n,R)}<\frac{\epsilon}{2(c_1+c_2)}.\]
Then,
by (\ref{equ3_6}), we have
\begin{equation}
0\leq\Psi(x^k)\leq \frac{(G+\Phi)(x^k)-(G+\Phi)(x^{k+1})}{s}+\frac{\epsilon}{2}.
\end{equation}
Since $(G+\Phi)(x^k)$ converges, there exists a natural number $K>0$ such that for every natural number $k>K$,
we have $(G+\Phi)(x^k)-(G+\Phi)(x^{k+1})<s\epsilon/2$. Thus, there exist a natural number $K$,
for every natural number $k>K$, $0\leq \Psi(x^k)<\epsilon$, which completes the proof of the lemma.
\end{proof}

\begin{theorem}\label{theorem3_4}
Denote by $\{x^k\}$ the sequence generated by Algorithm 2. Then $\{x^k\}$ has a subsequence converging to a stationary point of the functional $\mathcal{J}_{\alpha,\beta}^{\delta}(x)$.
\end{theorem}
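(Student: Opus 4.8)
The plan is to combine the finite-dimensionality of the ambient space with the three facts already established about the iterates: the functional $\mathcal{J}_{\alpha,\beta}^{\delta}$ decreases along $\{x^k\}$ (Lemma~\ref{lemma7}), the iterates stay in the compact shell $S=\{x\mid c\le\|x\|_{\ell_2}\le C\}$ with $c>0$ (Remark~\ref{remark5}), and the optimality-gap functional $\Psi$ tends to zero (Lemma~\ref{lemma3_8}). Since a stationary point of $\mathcal{J}_{\alpha,\beta}^{\delta}$ is precisely a point at which the first-order condition \eqref{equ3_3} holds, and by Lemma~\ref{lemma3_1} this is equivalent to $\Psi(x)=0$, the whole theorem reduces to producing a limit point $x^{*}$ of $\{x^k\}$ at which $\Psi$ vanishes.

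First I would extract a convergent subsequence. Because we work in $\mathbb{R}^n$ and $\{x^k\}\subset S$ is bounded, the Bolzano--Weierstrass theorem yields a subsequence, still denoted $\{x^k\}$, and a point $x^{*}\in\mathbb{R}^n$ with $x^k\to x^{*}$. The crucial structural point is that $\|x^k\|_{\ell_2}\ge c>0$ forces $\|x^{*}\|_{\ell_2}\ge c>0$, so $x^{*}\ne 0$ and hence $x^{*}\in S$; in particular $G$ is Fr\'echet (hence G\^ateaux) differentiable at $x^{*}$, so $\Psi(x^{*})$ is well defined and Lemma~\ref{lemma3_1} is applicable there.

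Next I would pass to the limit in the gap. By choosing $y=x$ in the minimization defining \eqref{equ3_5}, one sees that $\Psi(x)\ge 0$ for every admissible $x$. Lemma~\ref{lemma3_2} gives the lower semi-continuity of $\Psi$, so $\Psi(x^{*})\le\liminf_k\Psi(x^k)$; combined with $\lim_k\Psi(x^k)=0$ from Lemma~\ref{lemma3_8}, this yields $\Psi(x^{*})\le 0$, and together with $\Psi(x^{*})\ge 0$ we conclude $\Psi(x^{*})=0$. By the equivalence of \eqref{equ3_4} and \eqref{equ3_3} established in Lemma~\ref{lemma3_1}, $x^{*}$ satisfies the first-order necessary optimality condition, i.e.\ $x^{*}$ is a stationary point of $\mathcal{J}_{\alpha,\beta}^{\delta}$, which proves the theorem.

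The only delicate point---rather than a genuine obstacle---is verifying that the limit $x^{*}$ is nonzero and therefore lies in the region where $G$ is smooth; this is exactly what Remark~\ref{remark5} was set up to guarantee, and without it neither $\Psi(x^{*})$ nor the optimality condition \eqref{equ3_3} would be meaningful at the limit. Everything else is a routine compactness-plus-lower-semicontinuity argument.
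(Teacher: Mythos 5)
Your proposal is correct and follows essentially the same route as the paper: extract a convergent subsequence by boundedness, then combine the lower semicontinuity of $\Psi$ (Lemma~\ref{lemma3_2}) with $\lim_k\Psi(x^k)=0$ (Lemma~\ref{lemma3_8}) and $\Psi\ge 0$ to get $\Psi(x^*)=0$, hence stationarity via Lemma~\ref{lemma3_1}. If anything, your write-up is more careful than the paper's, which omits the explicit check that $x^*\ne 0$ (so that $G$ is differentiable at the limit) and misprints $\Phi$ where $\Psi$ is meant.
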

\begin{proof}
Since $\{\|x^k\|_{\ell_2}\}$ is bounded, there exist a constant $x^*$ and a convergent subsequence of $\{x^k\}$ ,
still denoted by $\{x^k\}$, such that $x^k\rightarrow x^*$.
By Lemma \ref{lemma3_2}, $x^k\rightarrow x^*$ implies that $\liminf_{k\rightarrow\infty} \Phi(x^k)\geq \Phi(x^*)$. Then, by Lemma \ref{lemma3_8}, we have $\Phi(x^*)=0$, which completes the proof of the theorem.
%
\end{proof}



\subsection{Determining a solution $z^k$}\label{sec3.2}

It is shown that in Algorithm 2, when $x^k=0$, we can compute $x^{k+1}$ by the classical soft thresholding iteration. Next, we discuss how to utilize iterative soft thresholding algorithm to derive the iterative scheme when $x^k\neq0$.
A crucial issue is how to determine the direction $z^k$.

The Fr\'{e}chet derivative of $G(x)$
is given by
\[G'(x)=F'(x)^{*}(F(x)-y^{\delta})-\lambda x-\frac{\beta x}{\|x\|_{\ell_2}}.\]
The descent direction $z^k$ in Algorithm 2 is given by
\begin{equation}\label{equationadd1}
 \begin{array}{llc}
\displaystyle
\min\limits_{z} \langle F'(x^k)^{*}(F(x^k)-y^{\delta})-\lambda x^k-\frac{\beta x^k}{\|x^k\|_{\ell_2}},z\rangle+\frac{\lambda}{2}\|z\|_{\ell_2}^2+\alpha\|z\|_{\ell_1}.
 \end{array}
 \end{equation}
The minimizer of (\ref{equationadd1}) can be computed explicitly componentwise. The $i^{\rm th}$ component of $z$
satisfies
\begin{equation}\label{equationadd12}
z_i+\frac{\alpha}{\lambda}\mathrm{sign}(z_i)=\left(x^k+\frac{\beta x^k}{\lambda \|x^k\|_{\ell_2}}
-\lambda^{-1}F'(x^k)^{*}(F(x^k)-y^{\delta})\right)_{i}.
\end{equation}
The solution of (\ref{equationadd12}) can be expressed by the soft threshold (ST) function $\mathbb{S}_{\alpha/\lambda}$ and $S_{\alpha/\lambda}$, where $\mathbb{S}_{\alpha/\lambda}(x)$ is defined by
\begin{equation}\label{equationadd2}
\begin{array}{llc}
\displaystyle \mathbb{S}_{\alpha/\lambda}(x)=\sum\limits_{i}S_{\frac{\alpha}{\lambda}}(x_i)e_i
 \end{array}
 \end{equation}
and ${S}_{\alpha/\lambda}(t)$, $t\in \mathbb{R}$, is defined by
\begin{align}
\displaystyle S_{\alpha/\lambda}(t)= \left\{\begin{array}{ll}
\displaystyle t-\frac{\alpha}{\lambda}~~~~{\rm if}~~~t\geq\frac{\alpha}{\lambda}, \\[2mm]
\displaystyle 0~~~~~~~~~~{\rm if}~~~|t|<\frac{\alpha}{\lambda}, \\[2mm]
\displaystyle t+\frac{\alpha}{\lambda}~~~~{\rm if}~~~t \leq-\frac{\alpha}{\lambda}.
\end{array}
\right. 	\label{equationadd3}
\end{align}

\begin{lemma}\label{lemmaadd2} If $x^k\neq 0$, then the minimizer of problem \eqref{equationadd1} is given by
\begin{equation}\label{equadd1}
 \begin{array}{llc}
\displaystyle
z^k=\mathbb{S}_{\alpha/\lambda}\left(\left(\frac{\beta}{\lambda \|x^k\|_{\ell_2}}+1\right)x^k-\frac{1}{\lambda}F'(x^k)^{*}(F(x^k)-y^{\delta})\right).
 \end{array}
 \end{equation}
\end{lemma}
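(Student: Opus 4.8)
The plan is to exploit the separable structure of the objective in \eqref{equationadd1}: since the problem is posed in the finite-dimensional space $\mathbb{R}^n$ and the functional is a sum over the components of $z$, the minimization decouples into $n$ independent scalar problems, each of which is solved explicitly by the soft threshold function $S_{\alpha/\lambda}$.

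First I would absorb the linear part into a single coefficient vector. Writing $b:=F'(x^k)^{*}(F(x^k)-y^{\delta})-\lambda x^k-\tfrac{\beta x^k}{\|x^k\|_{\ell_2}}$, the objective in \eqref{equationadd1} becomes $\sum_{i}\bigl(b_i z_i+\tfrac{\lambda}{2}z_i^2+\alpha|z_i|\bigr)$. This is proper and coercive, and it is strictly convex because the quadratic term $\tfrac{\lambda}{2}\|z\|_{\ell_2}^2$ with $\lambda>0$ is strictly convex while the remaining terms are convex; hence a unique minimizer exists. Since the functional separates into a sum of terms $f_i(z_i):=b_i z_i+\tfrac{\lambda}{2}z_i^2+\alpha|z_i|$, each depending on a single coordinate, the minimizer is obtained by minimizing each $f_i$ independently.

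Next I would apply the first-order optimality condition to each scalar problem. As $f_i$ is convex but non-differentiable at $z_i=0$, the condition must be written with the subdifferential, $0\in b_i+\lambda z_i+\alpha\,\partial|z_i|$, where $\partial|z_i|=\{\mathrm{sign}(z_i)\}$ for $z_i\neq0$ and $\partial|0|=[-1,1]$. For $z_i\neq0$ this yields $z_i+\tfrac{\alpha}{\lambda}\mathrm{sign}(z_i)=-b_i/\lambda$, and substituting $-b_i/\lambda=\bigl((1+\tfrac{\beta}{\lambda\|x^k\|_{\ell_2}})x^k-\tfrac{1}{\lambda}F'(x^k)^{*}(F(x^k)-y^{\delta})\bigr)_i$ recovers exactly \eqref{equationadd12}. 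For $z_i=0$ the condition reduces to $|b_i|\le\alpha$, equivalently $|-b_i/\lambda|\le\alpha/\lambda$.

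Finally I would check that, with $t_i:=-b_i/\lambda$, the three regimes $t_i\ge\alpha/\lambda$, $t_i\le-\alpha/\lambda$ and $|t_i|<\alpha/\lambda$ are resolved precisely by the definition of $S_{\alpha/\lambda}$ in \eqref{equationadd3}: in each case one verifies directly that $z_i=S_{\alpha/\lambda}(t_i)$ satisfies the optimality condition above. Reassembling the components through \eqref{equationadd2} gives $z^k=\mathbb{S}_{\alpha/\lambda}(t)$ with $t=(1+\tfrac{\beta}{\lambda\|x^k\|_{\ell_2}})x^k-\tfrac{1}{\lambda}F'(x^k)^{*}(F(x^k)-y^{\delta})$, which is \eqref{equadd1}. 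The computation is essentially routine; the only delicate point is the subdifferential treatment at $z_i=0$, which is exactly what forces the thresholding (rather than a naive linear solve) to appear, and which requires $x^k\neq0$ so that the coefficient $\tfrac{\beta}{\lambda\|x^k\|_{\ell_2}}$ in $t$ is well defined.
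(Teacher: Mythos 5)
Your proof is correct and follows essentially the same route as the paper: reduce \eqref{equationadd1} to $n$ independent scalar problems by separability and identify the solution of each with the soft-threshold function $S_{\alpha/\lambda}$. The only cosmetic difference is that the paper completes the square and invokes the resolvent identity $(I+\tfrac{1}{\lambda}\partial(\alpha\|\cdot\|_{\ell_1}))^{-1}$ from Rockafellar--Wets, whereas you derive the same thresholding formula directly from the subdifferential optimality condition, which is a self-contained (and equally valid) way of establishing the same fact.
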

\begin{proof}
The proof is along the line of Lemma 2.3 in \cite{BBLM07}. The problem (\ref{equationadd1}) is equivalent to the problem
\begin{equation}
 \begin{array}{llc}
\displaystyle
\min\limits_{z}\sum\limits_{i}\frac{\lambda}{2}\bigg| z_{i}-\left(x^k+\frac{\beta x^k}{\lambda \|x^k\|_{\ell_2}}-\lambda^{-1}F'(x^k)^{*}(F(x^k)-y^{\delta})\right)_{i}\bigg|^2+\alpha|z_i|.
 \end{array}
 \end{equation}
From a result in \cite[Chapter 10]{RW1998}, for every proper,
convex $g:\mathbb{R}\rightarrow \mathbb{R}$ and every $\lambda>0$,
\[\displaystyle (I+\frac{1}{\lambda}\partial(\alpha \|\cdot\|_{\ell_1}))^{-1}(x)
=\displaystyle \arg\min\limits_{\omega}\bigg\{\frac{\lambda}{2}|\omega-x|^2+g(\omega)\bigg\}.\]
Then we can determine the minimizer $z^k$ by
\begin{equation}\label{equ39}
 \begin{array}{llc}
\displaystyle
z^k=\sum\limits_{i}\left[(I+\frac{1}{\lambda}\partial(\alpha \|\cdot\|_{\ell_1}))^{-1}\left(\left(x^k+\frac{\beta x^k}{\lambda \|x^k\|_{\ell_2}}-\lambda^{-1}F'(x^k)^{*}(F(x^k)-y^{\delta})\right)_{i}\right)\right]\cdot e_i.
 \end{array}
 \end{equation}
Using definition (\ref{equationadd2}) and (\ref{equationadd3}), we can rewrite (\ref{equ39})
in the form of (\ref{equadd1}).
\end{proof}

The complete ST-$({\alpha \ell_1-\beta \ell_2})$ algorithm is shown in Algorithm 3. Note that if $\beta=0$, (\ref{equadd1}) reduces to the standard iterative soft thresholding algorithm.

\begin{algorithm}\label{alg3}
\caption{ST-$({\alpha \ell_1-\beta \ell_2})$ algorithm for problem (\ref{equ1_2}) in the finite dimensional space $\mathbb{R}^n$}
\begin{algorithmic}
\STATE{Choose $x^0\in \mathbb{R}^n$ such that $\Phi(x^0)<+\infty$, and set $k=0$.}
\STATE{for $k$ = 0, 1, 2, $\cdots$ do}
\STATE{~~~~If $x^k=0$ then}
\STATE{~~~~~~$\displaystyle x^{k+1}=\arg\min\frac{1}{2}\|F(x)-y^{\delta}\|_{\ell_2}^{2}+\alpha\|x\|_{\ell_1}$}
\STATE{~~~~else}
\STATE{~~~~~~Determine a solution $z^k$ by sloving \[\displaystyle z^{k}=\mathbb{S}_{\alpha/\lambda}\left(\left(\frac{\beta}{\lambda \|x^k\|_{\ell_2}}+1\right)x^k-\frac{1}{\lambda}F'(x^k)^*(F(x^k)-y^{\delta})\right)\]}
\STATE{~~~~~~Set a step size $s^k$ as a solution of \[\min\limits_{s \in [0,1]} G(x^k+s(z^k-x^k))+\Phi(x^k+s(z^k-x^k)).\]}
\STATE{~~~~~~$x^{k+1}=x^k+s^k(z^k-x^k)$}
\STATE{~~~~end if}
\STATE{~~~~$k=k+1$}
\STATE{end for}
\end{algorithmic}
\end{algorithm}


\section{Numerical experiments}\label{sec5}

In this section, we implement the algorithm described in Section \ref{sec4} for a nonlinear compressive sensing (CS)
problem (\cite{BE13,B13,CL16,S12,YWLJWJ15}). Here we are interested in the sparse recovery for a CS problem where
the observed signal is measured with some nonlinear system. The research of nonlinear CS is not only important
in theoretical analyses but also in many applications, where the observation system is often nonlinear.
For example, in diffraction imaging, charge coupled device (CCD) records the amplitude of the Fourier transform of the original signal. So one only obtains the nonlinear measurements of the original signal. Fortunately, in \cite{B13}, it is shown that if the system satisfies some nonlinear conditions then recovery should still be possible.

Under the nonlinear CS frame, the measurement system is nonlinear. Assume, therefore, that the observation model is
\begin{equation}\label{ncsequ}
y=F(x)+\delta,
\end{equation}
where $\delta\in \mathbb{R}^m$ is a noise level, $x\in \mathbb{R}^n$ and $F: \mathbb{R}^n\rightarrow \mathbb{R}^m$ is a nonlinear operator. It is shown that if the linearization of $F(x)$ at an exact solution $x^{\dag}$ satisfies the restricted isometry property (RIP), then the convergence property is guaranteed (\cite{B13}). Next we illustrate the efficiency of the proposed algorithm by a nonlinear CS example of the form
\begin{equation}\label{ncsequ2}
y=F(x):=\hat{a}(A\hat{b}(x))
\end{equation}
which was introduced in \cite{B13}, where $A$ is CS matrix, $\hat{a}(\cdot)$ and $\hat{b}(\cdot)$ are
nonlinear operators, respectively. Here, $\hat{a}(\cdot)$ encodes nonlinearity after mixing by $A$ as well as
nonlinear ``crosstalk'' between mixed elements. $\hat{b}(\cdot)$ encodes the same system properties for the inputs
before mixing. For simplicity, we write $\hat{a}(x)= x+a(x)$ and $\hat{b}(x)= x+b(x)$, where again $a(x)$ and $b(x)$
are nonlinear maps. In particular, one always let $a(x)=x^c$ and $b(x)=x^d$, where $c, d \in \mathbb{N}_{+}$.
In \cite{S12}, the author stated that an important example in nonlinear compressive sensing is the case where
we observe signal intensities, i.e.\ the measurements are of the form
$y=(Ax)^2:=\left((Ax)_1^2,(Ax)_2^2, \cdots, (Ax)_i^2, \cdots\right)$, where $(Ax)_i$ is the $i^{\rm th}$
component of $Ax$. It is a particular case of the nonlinear observation system $y=\hat{a}(A\hat{b}(x))$,
where $\hat{a}(x)=x^2$ and $\hat{b}(x)=x$. In this case, the phase information
is missing. The problem is then to reconstruct the exact sparse signal $x^{\dag}$ from intensity measurements only.

Next, we turn to studying the nonlinearity of the operator $F(x)$. In \cite{B13}, it is shown that the Jacobian matrix of $\hat{a}(A\hat{b}(x))$ is of the form
\[ F'(x)=[I+a'_x][A(I+b'_x)],\]
where $a'_x$ is the Jacobian of $a(\cdot)$ evaluated at $A\hat{b}(x)$ and $b'_x$ is
the Jacobian of $b(\cdot)$ evaluated at $x$. We assume that $\|x\|_{\ell_2}$ and CS matrix $A$ are bounded, which implies that $a'_x$ is bounded. Then, there exists
a constant $c>0$ such that $\|F'(x_1)-F'(x_2)\|_{L(\mathbb{R}^n, \mathbb{R}^m)}\leq c\|x_1-x_2\|_{\ell_2}$, i.e.
$F'(x)$ is Lipschitz continuous, see the reference \cite[Lemma 3, Lemma 4]{B13}.

We present several numerical tests which demonstrate the efficiency of
the proposed method. To make Algorithm 3 clear to the reader, we study the influence of the parameters $\lambda$, $\eta$, $s_k$ and the nonlinear maps $a(\cdot)$ and $b(\cdot)$  on the inversion result $x^{*}$. Note that if $\eta=0$ i.e.\ $\beta=0$, (\ref{equ1_2}) reduces to the convex $\ell_1$ sparsity regularization.
Then (\ref{equadd1}) reduces to the form
\[ z^k=\mathbb{S}_{\alpha/\lambda}\left(x^k-\frac{1}{\lambda}F'(x^k)^*(F(x^k)-y^{\delta})\right).  \]
For the numerical simulation, we use a setting that $A$ is a Gaussian random measurement matrix. The nonlinear CS problem
is of the form $\hat{a}(A_{m\times n}\hat{b}(x_n))=y_m$, where $A_{m\times n}$ is a Gaussian random measurement matrix. The exact solution $x^{\dag}$ is $s$-sparse. The exact data $y^{\dag}$ is obtained by $y^{\dag}=\hat{a}(A\hat{b}(x^{\dag}))$.
White Gaussian noise is added to the exact data $y^{\dag}$ and $\delta$ is the noise level, measured in dB. The iterative solution is denoted by $x^{*}$. The performance of the iterative solution $x^*$ is evaluated by signal-to-noise ratio (SNR) which is defined by
\[  \mathrm{SNR}:=\displaystyle -10\log_{10}\frac{\|x^{*}-x^{\dag}\|_{\ell_2}^2}{\|x^{\dag}\|_{\ell_2}^2}.   \]

We utilize the discrepancy principle to choose the regularization parameter $\alpha$.
Given an initial regularization $\alpha$, if the regularization parameter $\alpha$ satisfies the discrepancy
principle $\|F(x^*)-y\|_{\ell_2}>\delta$, we try $\displaystyle \alpha_j=\frac{\alpha}{2^j}$, $j=1, 2, \cdots$. With $j$ increasing, we calculate $x^*$ until we find $\alpha=\inf\{\alpha>0 \mid \|F(x^*)-y^{\delta}\|_{\ell_2}>\delta\}$.

We let $n = 200$, $m = 0.4n$, $s = 0.2m$. For the sparsity regularization of linear ill-posed problems,  the value of $\|A_{m\times n}\|_2$ needs to be less than 1 (\cite{DDD04}). This requirement is still needed for the nonlinear CS problem \eqref{ncsequ}. Actually, the value of $\|A_{m\times n}\|_2$ is around 20, and Algorithm 3 is divergent without preprocessing. So to ensure the convergence, we need to re-scale the matrix $A_{m\times n}$ by
$A_{m\times n}\rightarrow0.05A_{m\times n}$. Note that Algorithm 3 will also be divergent with too small value of
$\|A_{m\times n}\|_2$.  We let the parameters $\lambda = 4.0$, $s^k=1$. The initial value
$x^0$ in Algorithm 3 is generated by calling $x^0$=1e-6$*\mathrm{ones}(n,1)$ in MATLAB. Actually, for sparse recovery, one natural choice for the initial value $x^0$ is $\mathbf{0}$ vector, i.e. $\mathrm{zeros}(n,1)$. However, (\ref{equadd1})
is not defined when $x^0=\mathbf{0}$. Another natural choice for the initial value $x^0$ is $\mathbf{1}$ vector, i.e.\
$\mathrm{ones}(n,1)$. However, the proposed algorithm is not convergent due to the absence of the multi-local minimums.

In the first test, we discuss the convergence and convergence rate of the proposed algorithm. We let $c=2$ and $d=3$.
The noise level $\mathrm{\delta}$ is $30\mathrm{dB}$.  We choose different parameters $\eta$ to test its influence
on the iterative solution $x^*$. Fig.\ \ref{fig1} shows the graphs of the iterative solution $x^{*}$ when the
regularization parameter $\alpha=0.125$. It is obvious that the results of inversion get better with $\eta$
increasing, which shows that the non-convex regularization with $\eta>0$ has better performance than the classical
$\ell_1$ regularization. Fig.\ \ref{fig2} displays graphs of the inversion solution $x^*$ with respect to
iteration number $k$ when $\eta=1.0$. It shows that the convergence is good. The algorithm 3 does not divergent
even with large enough iteration number $k$. Fig.\ \ref{fig3} shows the convergence rate of inversion solution
$x^*$ with respect to iteration number $k$. We use relative error to evaluate the performance of $x^*$,
where the relative error $e$ is defined by $e:=\|x^{*}-x^{\dag}\|_{\ell_2}/\|x^{\dag}\|_{\ell_2}$.

\begin{figure}[tbhp]
\centering
\subfigure[Exact signal]{\includegraphics[width=70mm,height=30mm]{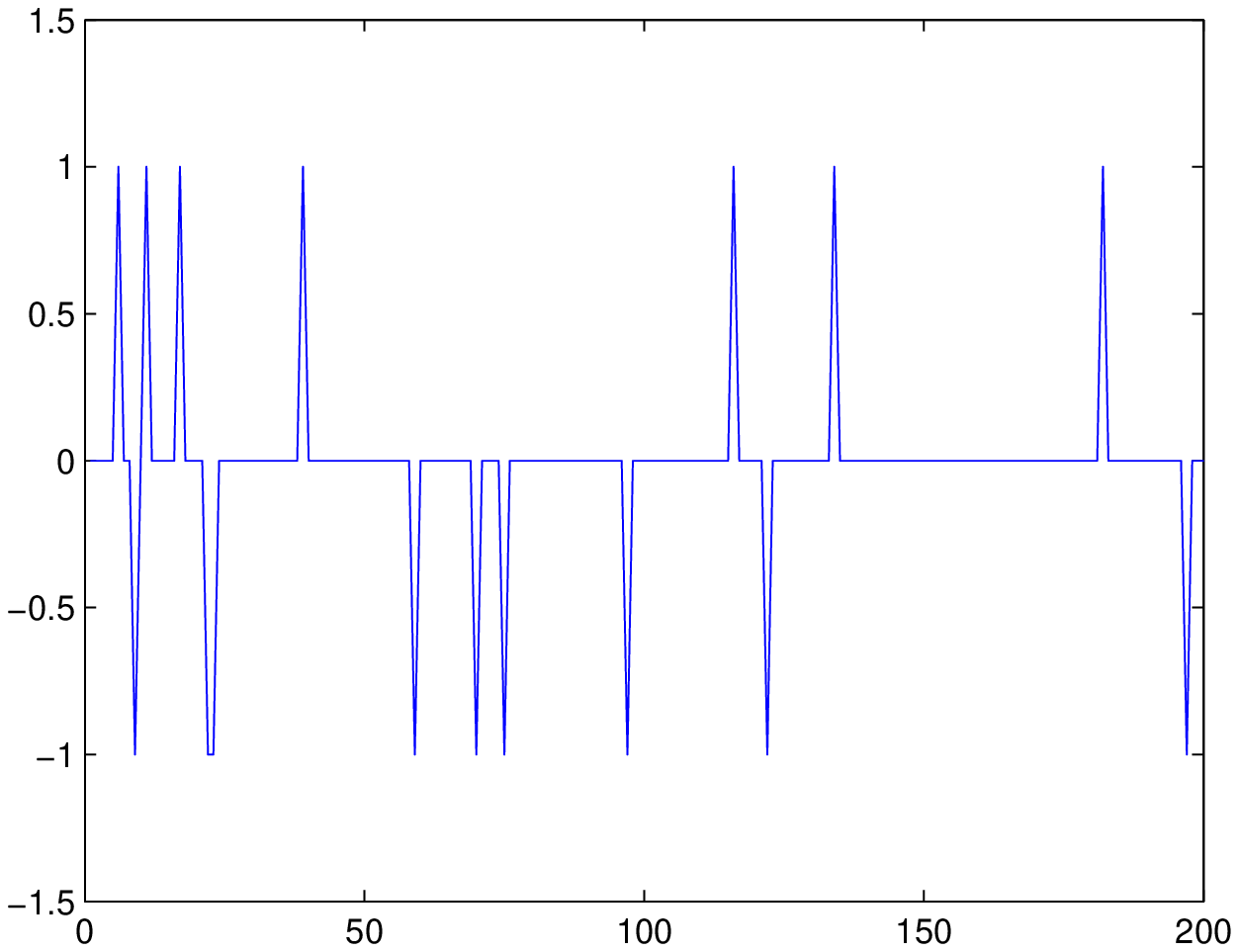}}
\subfigure[Observed and exact data ($\delta$=30dB)]{\includegraphics[width=70mm,height=30mm]{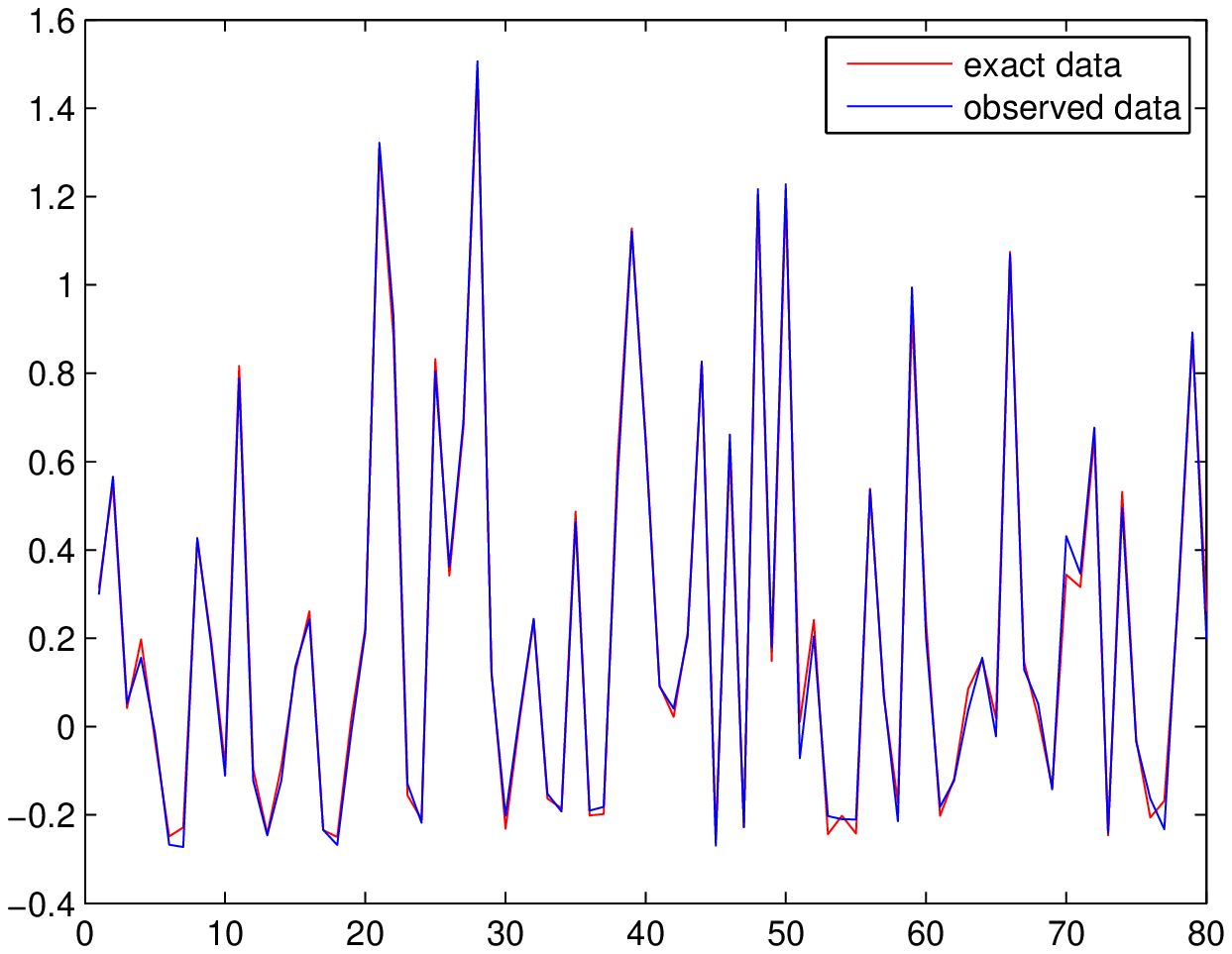}}\\
\subfigure[$\eta=0.0$, SNR=29.4755]{\includegraphics[width=70mm,height=30mm]{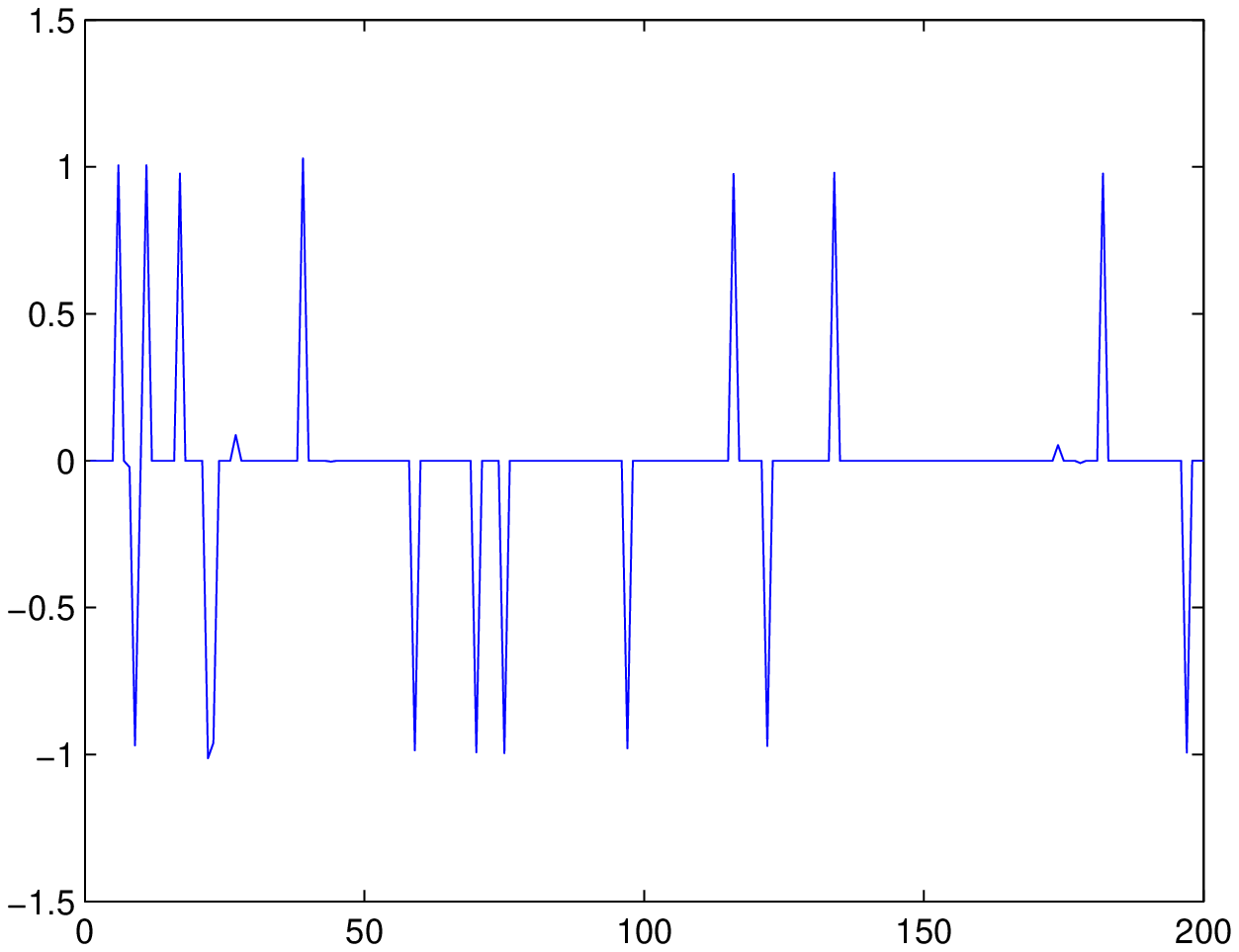}}
\subfigure[$\eta=0.4$, SNR=34.3366]{\includegraphics[width=70mm,height=30mm]{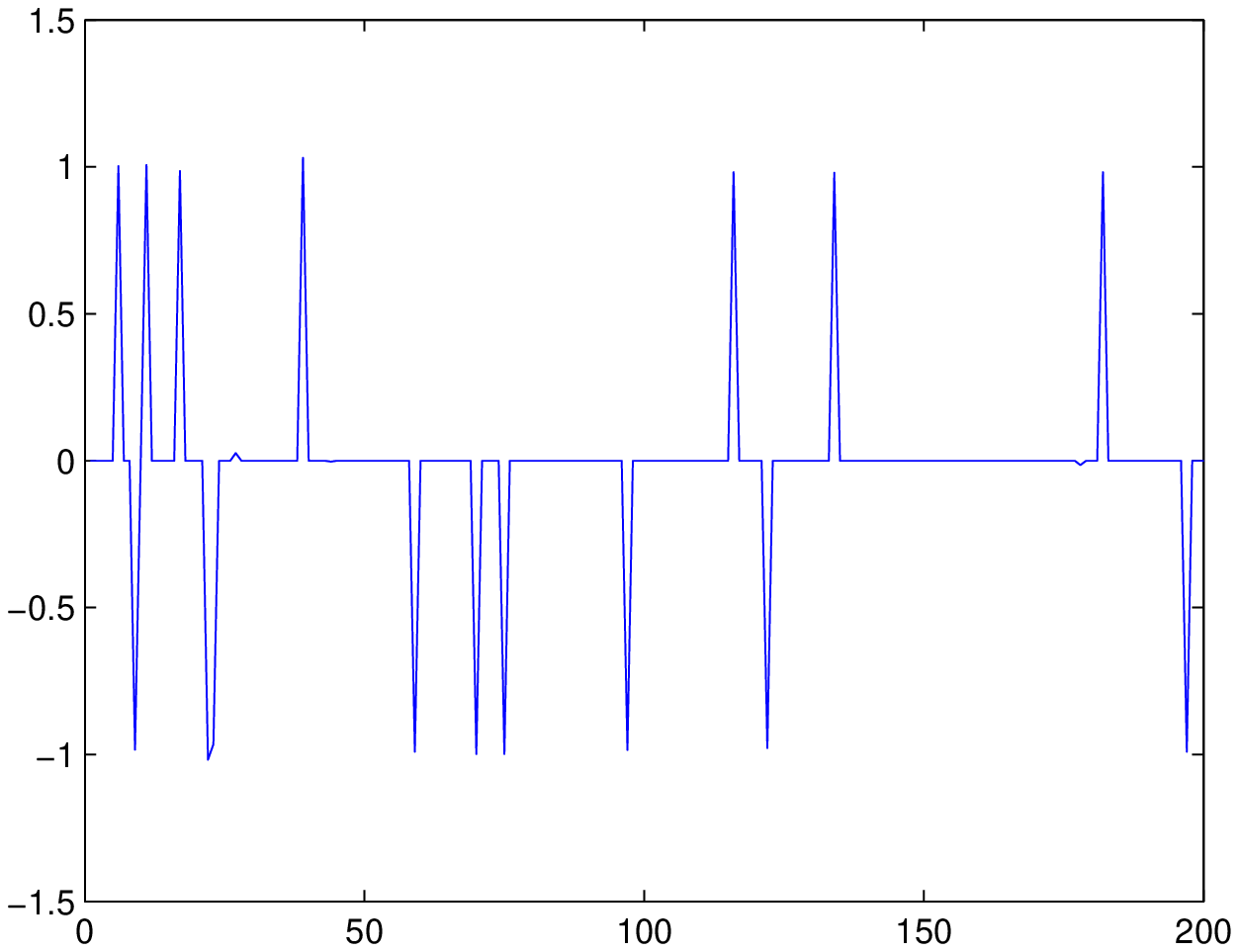}}\\
\subfigure[$\eta=0.8$, SNR=35.0716]{\includegraphics[width=70mm,height=30mm]{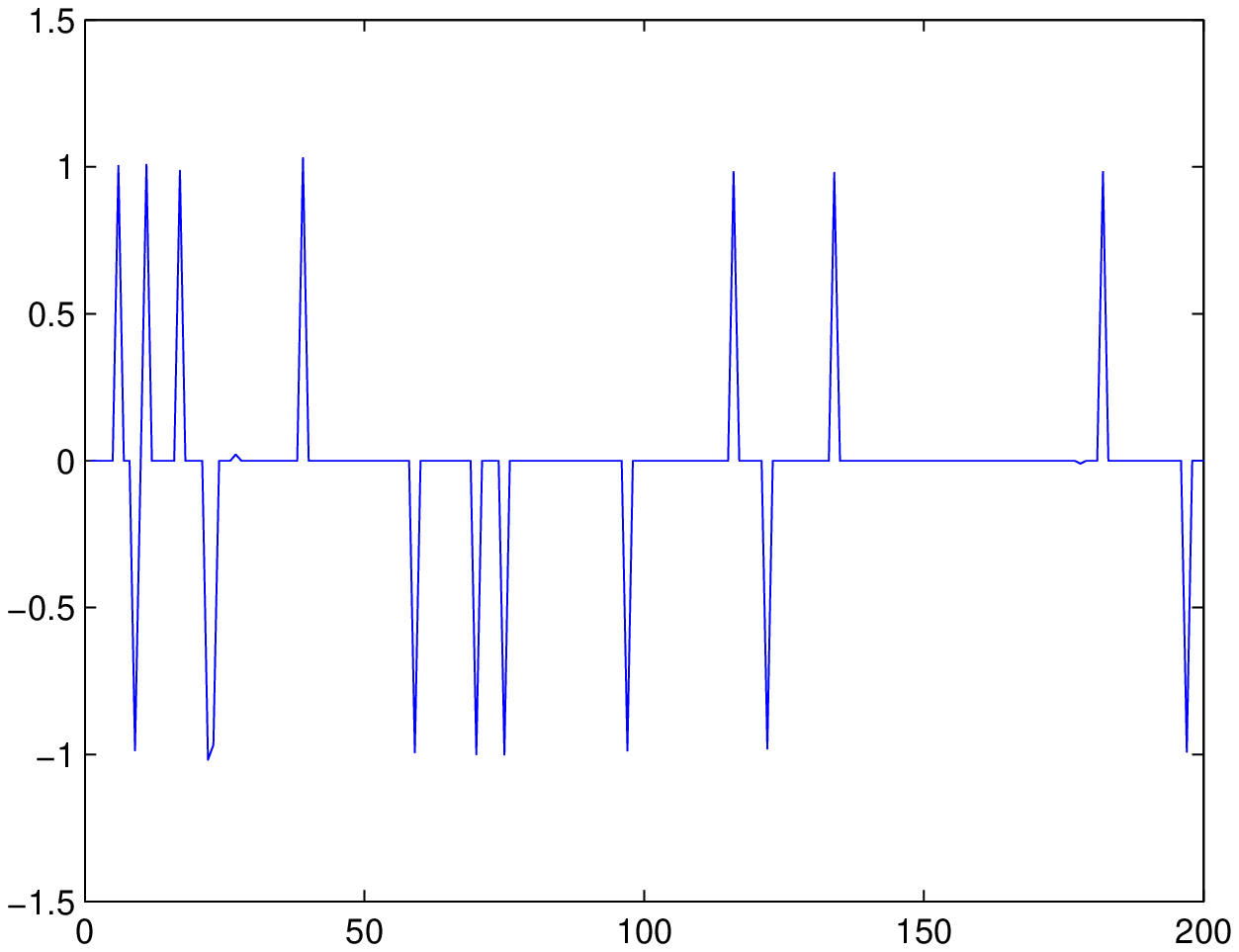}}
\subfigure[$\eta=1.0$, SNR=35.3914]{\includegraphics[width=70mm,height=30mm]{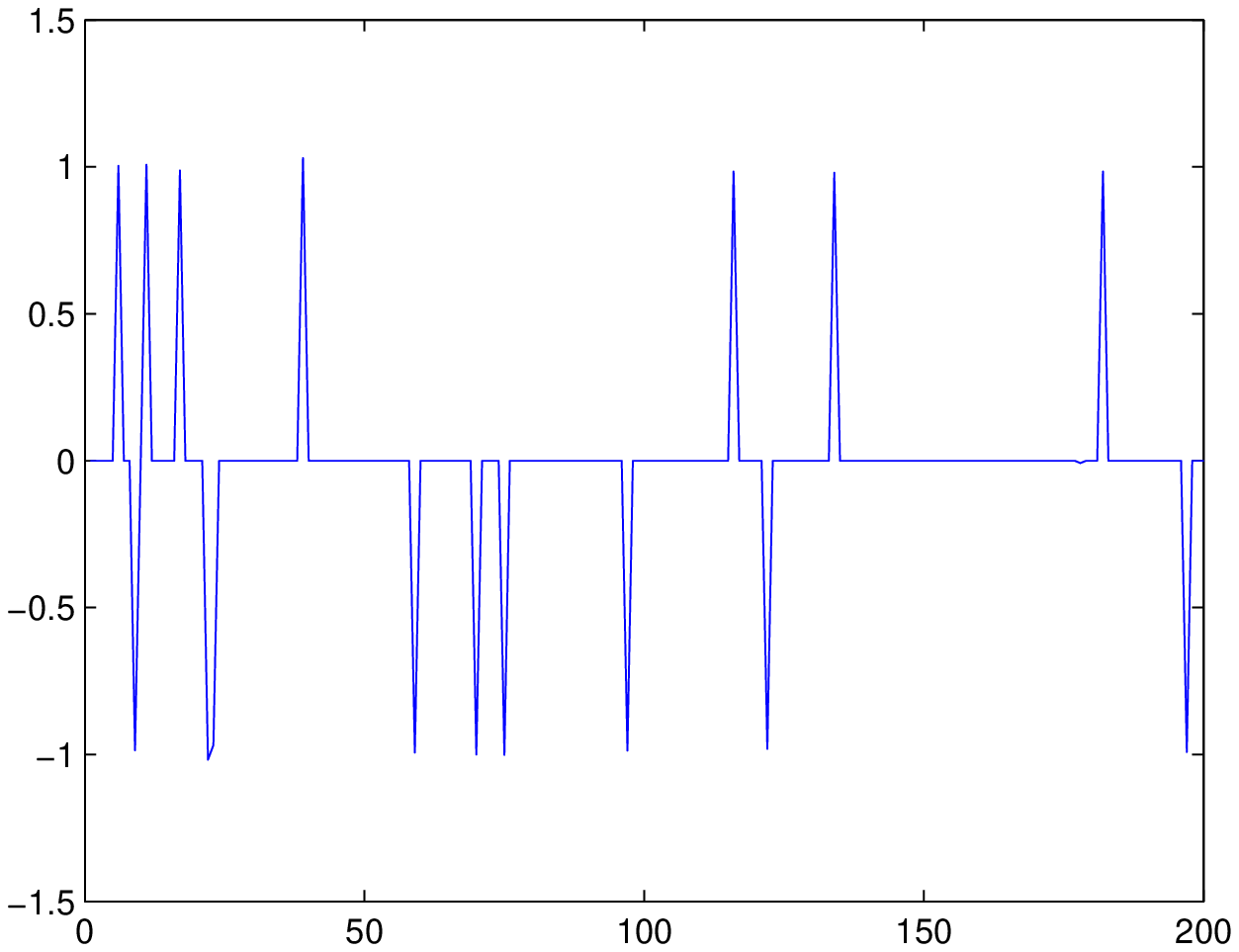}}\\
\caption{(a) Exact signal. (b) Observed and exact data. (c)--(f) The inversion solution $x^*$ with different $\eta$ at a fixed regularization parameter $\alpha=$ 0.125.}
\label{fig1}
\end{figure}

\begin{figure}[tbhp]
\centering
\subfigure[$k=5$, SNR=1.8169]{\includegraphics[width=70mm,height=30mm]{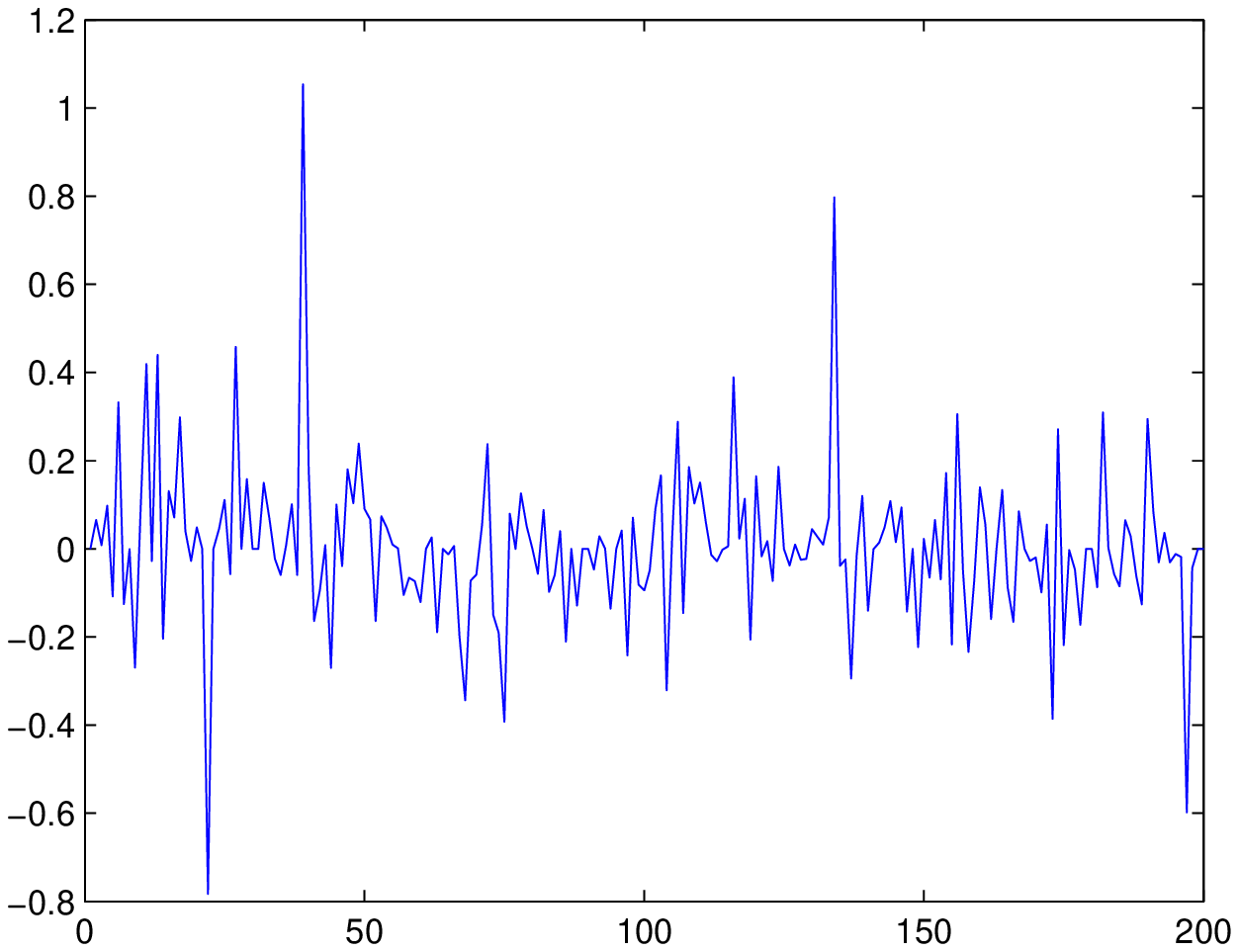}}
\subfigure[$k=40$, SNR=4.2687]{\includegraphics[width=70mm,height=30mm]{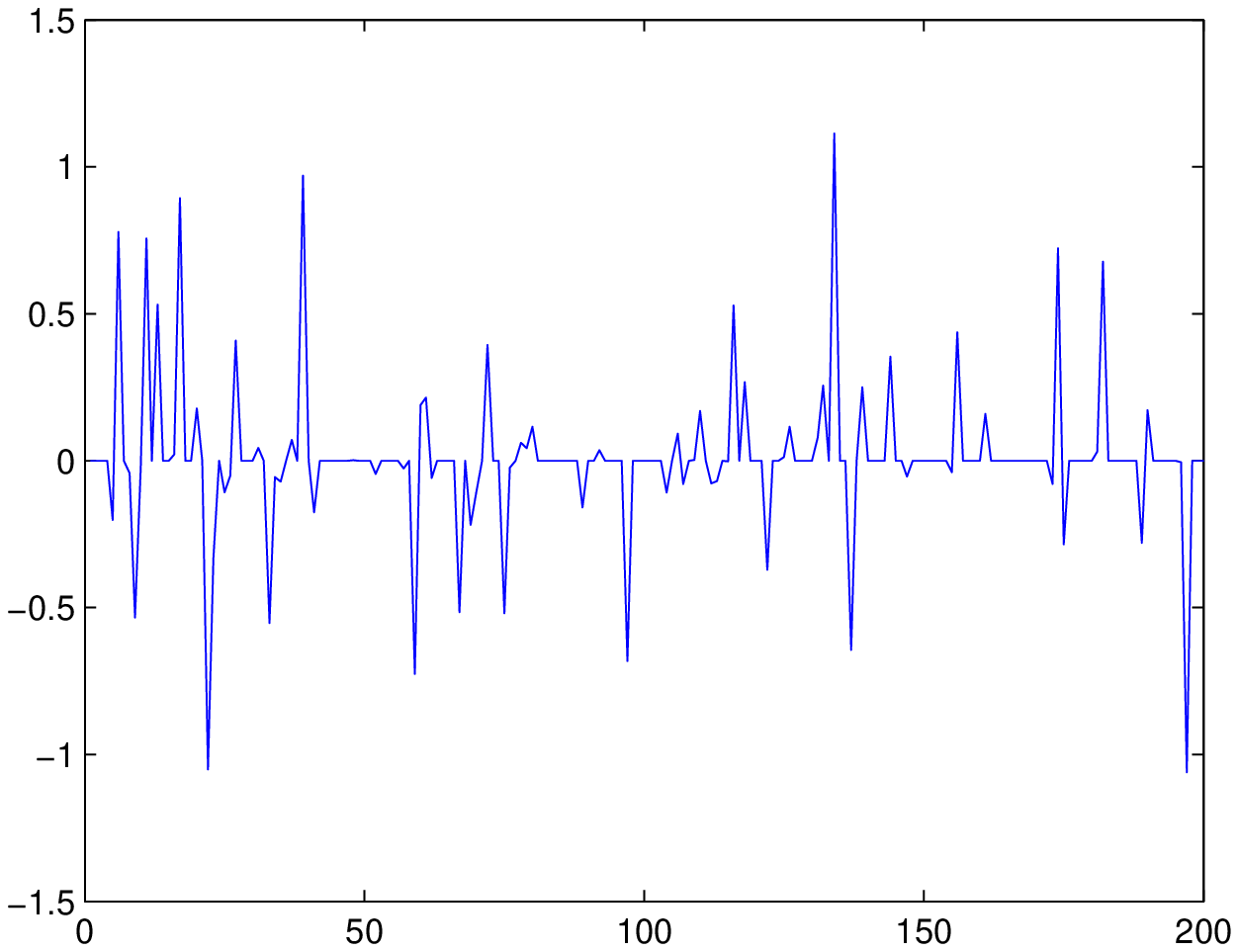}}\\
\subfigure[$k=80$, SNR=23.8996]{\includegraphics[width=70mm,height=30mm]{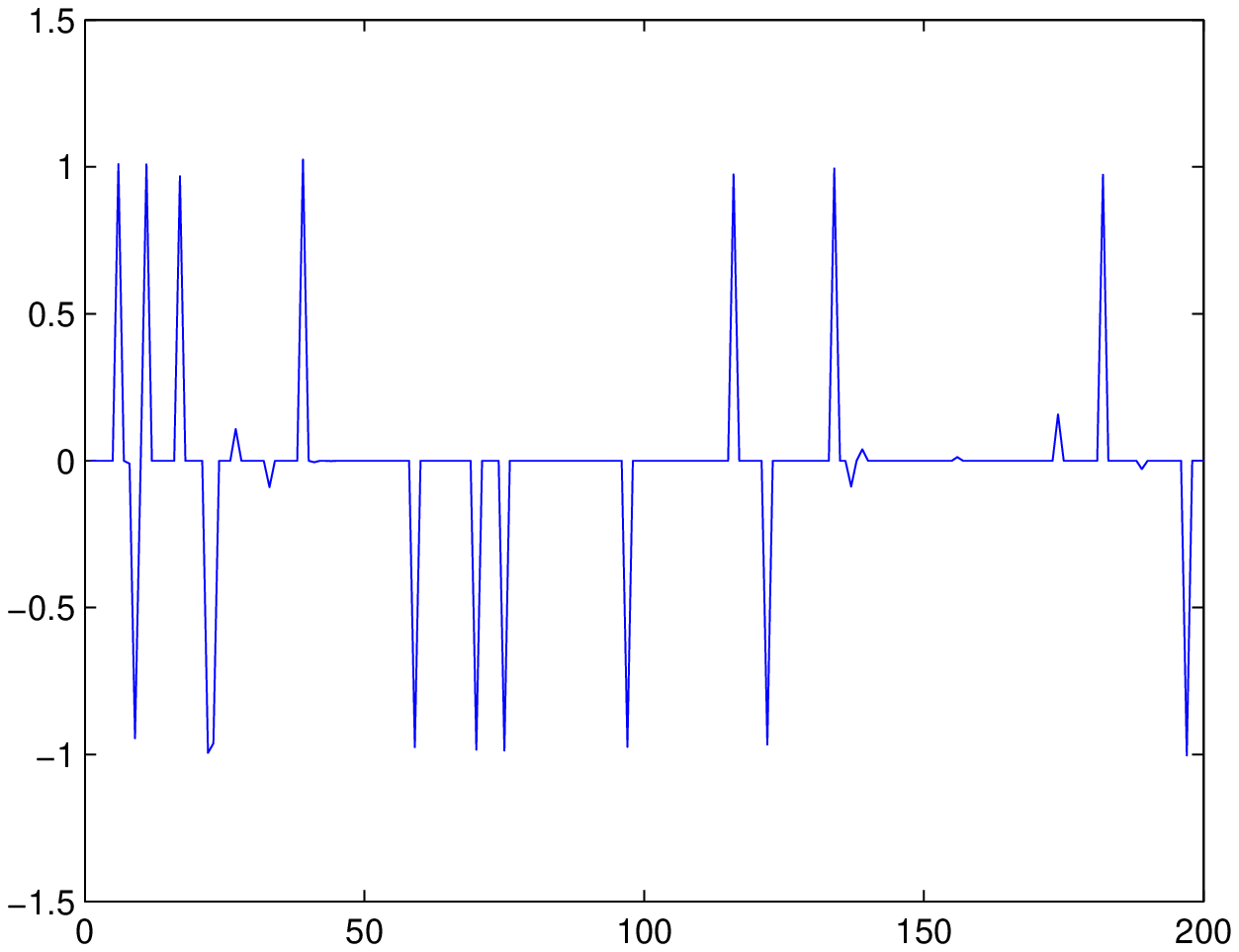}}
\subfigure[$k=100$, SNR=35.3914]{\includegraphics[width=70mm,height=30mm]{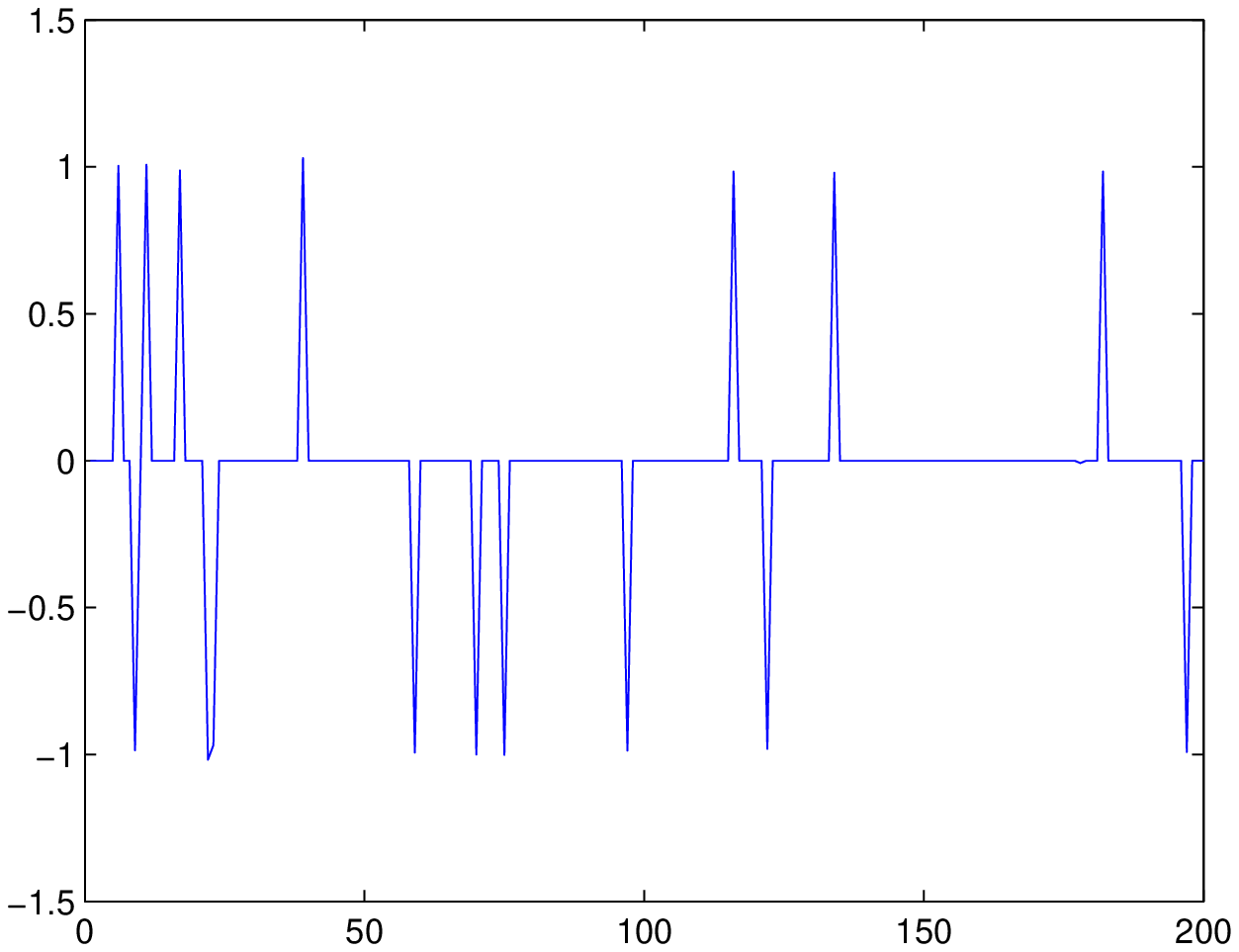}}\\
\caption{The inversion solution $x^*$ with different iteration number $k$ at a fixed parameter $\eta=$1.0.}
\label{fig2}
\end{figure}

Next, we test the effect of the step size $s^k$. For nonlinear ill-posed problems, it is shown in \cite{BBLM07} that
GCGM is convergent with a fixed step size. So, for the sake of simplicity in computation, we let $s^k=1$ in
this section. In Table \ref{tab1}, we set $\eta=1.0$ and let $\alpha$, $\delta$, $c$ and $d$ same as that in test 1.
We check the convergence and convergence rate of Algorithm 3 with different fixed step sizes.
Table \ref{tab1} shows that Algorithm 3 converges when the step size $s^k\leq 1.5$ and it is invalid when the
step size $s^k\geq 2.0$. Though Algorithm 3 is convergent when $s^k\leq 1.5$, it needs more iteration numbers
with smaller step size $s^k$, which implies that the convergence rate is worse with the step size $s^k$
getting smaller. Fortunately, Algorithm 3 provides same recovery results, when the step size $s^{k}\leq 1.5$.
So to ensure the convergence, one should choose a smaller step size $s^k$ at first. If Algorithm 3 converges,
one should choose the larger $s^k$ step by step to get the better convergence rate.
\begin{figure}[tbhp]
\centering
\includegraphics[width=140mm,height=40mm]{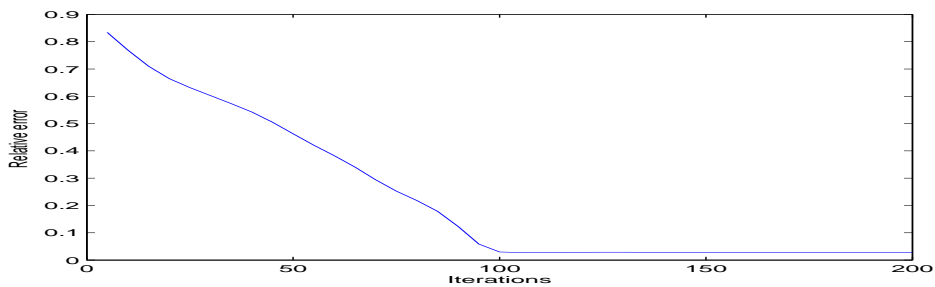}
\caption{Convergence rate of the inversion solution $x^*$ with respect to iteration number $k$ at a fixed parameter $\eta=$1.0.}
\label{fig3}
\end{figure}

\begin{table}[tbhp]
\caption{SNR of inversion solution $x^{*}$ and iterations of Algorithm 3 with different fixed step sizes}
\begin{tabular}{cccccccccccc} \hline
  & $s^k=0.001$&$s^k=0.01$& $s^k=0.1$& $s^k=1.0$& $s^k=1.5$&  $s^k=2.0$&  $s^k=3.0$\\
\hline
SNR &35.3914&35.3914&35.3914&35.3914&35.3914&NaN&NaN\\
iterations &1000&200&130&125&125&------&------\\
\hline
\end{tabular}
\label{tab1}
\end{table}

In the third test, we test the effect of the parameter $\lambda$. By \eqref{equ4_1}, it is obvious that the
inversion results do not change with respect to $\lambda$. However, numerical results show that larger or smaller parameter $\lambda$ leads to divergence (\cite{DH19}), which is still true
even for nonlinear ill-posed problems. Actually, from the formulation of Algorithm 3, we see that a small value
of $\lambda$ admits a larger $\beta x^k/(\lambda \|x^k\|_{\ell_2})$ in \eqref{equadd1}. Meanwhile, a larger value of
$\lambda$ admits a smaller value of the threshold $\alpha/\lambda$. In Table \ref{tab2}, we set
$\eta=1.0$ and let $\alpha$, $\delta$, $c$ and $d$ same as that in test 1 and give the inversion results
of Algorithm 3 with different $\lambda$. It is shown that $4.0\leq \lambda\leq5.0$ is a a good choice.
When $\lambda\leq 2.5$, Algorithm 3 is invalid. However, we can not get satisfactory inversion results
when $\lambda> 5$. Actually, SNR of the inversion solution $x^*$ decrease with $\lambda> 5$ increasing.

\begin{table}[tbhp]
\caption{SNR of inversion solution $x^{*}$ with different $\lambda$}
\begin{tabular}{cccccccccccc} \hline
$\displaystyle \lambda$&2& 2.5& 3.0& 3.5& 4.0& 4.5& 5.0& 6.0&7.5\\
\hline
SNR&NaN&NaN&26.9716&32.0468&35.3901&35.3914&35.3920&26.4582&17.2486\\
\hline
$\displaystyle \lambda$&10.0& 12.5& 15.0& 20.0& 25.0& 30.0& 35.0& 40.0& 45.0\\
\hline
SNR&12.2586&10.4873&9.2452&5.1721&3.7938&3.4259&3.5837&3.1547&2.4683\\
\hline
\end{tabular}
\label{tab2}
\end{table}

In the fourth test, we study the stability of Algorithm 3. To test the influence of $\delta$, we choose
several different noise levels which are added to the exact data $y^{\dag}$. Table \ref{tab3} displays
the inversion results. Obviously, the SNR of Inversion solution $x^{*}$ increase with the noise level decreasing.
It is shown that we can obtain satisfactory result when the noise level $\delta\geq 20\mathrm{dB}$.
However, Algorithm 3 does not converge for low noise levels, i.e.\ $\delta\leq 10\mathrm{dB}$. Meanwhile,
Table \ref{tab3} shows that Algorithm 3 has good stability corresponding to the noise level whenever the
parameter $\eta$ is. Which implies that the stability of Algorithm 3 is not sensitive with respect to $\eta$.

\begin{table}[tbhp]
\caption{SNR of inversion solution $x^{*}$ with several noise levels}
\begin{tabular}{cccccccccccc} \hline
& $ \eta=0$& $ \eta=0.2$& $ \eta=0.4$& $ \eta=0.6$& $ \eta=0.8$& $ \eta=1.0$\\
\hline
$\mathrm{Noise}~~\mathrm{free},\alpha=0.015$&44.5479&45.0044&45.4848&45.9917&46.5286&47.0994\\
$\delta$=50dB, $\alpha=0.031$&43.3370&45.7636&46.2097&46.6770&47.1678&47.6844\\
$\delta$=40dB, $\alpha=0.062$&36.2226&37.6898&38.1772&38.6865&39.2201&39.7804\\
$\delta$=30dB, $\alpha=0.125$&29.4775&32.1682&34.3366&34.9682&35.0716&35.3914\\
$\delta$=20dB, $\alpha=0.125$&22.4699&24.1333&25.5513&25.7172&25.7959&25.9081\\
$\delta$=10dB, $\alpha=0.250$&-1.5015&-1.5146&-1.5307&-1.5438&-1.5546&-1.5641\\

\hline
\end{tabular}
\label{tab3}
\end{table}

In the last test, we discuss the influence of the nonlinearity of $F$, i.e.\ the parameter $c$ and $d$
on the inversion solution $x^*$. The nonlinearity of the CS problem \eqref{ncsequ2} depends on the parameters
$c$ and $d$. In particular, the degree of nonlinearity of $F$ increases with the parameter $c$ and $d$ increasing.
In Table \ref{tab4}, we set $\eta=1.0$ and let parameters $\alpha$, $\delta$ and $s^k$ same as that in test 1.
It is obvious that the inversion results are stable with respect to the parameter $c$. The SNR of the
inversion solution $x^*$ are similar with different parameter $c$. However, the inversion results are sensitive
with respect to the parameter $d$. When $d\geq 7$, we can not get satisfactory results. In particular,
Algorithm 3 is invalid when the parameter $d$ is even number. Fig.\ \ref{fig4} shows the inversion solution
$x^*$ with respect to the iterations at the fixed parameter $c=2$ and $d=4$. Actually, Algorithm 3 can only
identify the positive impulses and it fails to recovery the negative impulses when $d$ is even number.

\begin{table}[tbhp]
\caption{ SNR of inversion solution $x^{*}$ with different parameters $c$ and $d$}
\begin{tabular}{cccccccccccc} \hline
      & $d=1$& $d=2$& $d=3$& $d=4$& $d=5$& $d=6$& $d=7$& $d=8$& $d=9$\\
\hline
$c=1$&46.2683&4.1589&41.8642&3.2158&41.2564&2.5784&13.5876&NaN&NaN\\
$c=2$&40.6830&4.2591&42.0519&3.0102&43.8885&1.2486&11.9139&NaN&NaN\\
$c=3$&39.0531&3.0102&40.5454&1.2494&33.1849&2.0412&14.1032&NaN&NaN\\
$c=4$&39.4280&2.0409&39.5473&2.0311&33.4812&3.0022&18.8408&6.0172&NaN\\
$c=5$&39.5284&3.0097&40.5428&2.0412&31.6905&4.2200&17.0556&2.0336&NaN\\
$c=6$&39.8423&3.0095&39.9291&2.0411&35.7897&1.2494&16.8327&2.0403&NaN\\
$c=15$&39.8423&1.2493&42.2085&3.0101&38.4862&2.0412&19.9787&3.0045&NaN\\
$c=20$&39.8423&1.2492&38.8362&2.0412&39.2417&3.0103&18.6429&2.0412&NaN\\
$c=50$&40.5934&2.1863&39.7846&2.1957&39.7341&2.9472&19.1584&2.6893&NaN\\
\hline
\end{tabular}
\label{tab4}
\end{table}

\begin{figure}[tbhp]
\centering
\subfigure[Exact signal]{\includegraphics[width=70mm,height=30mm]{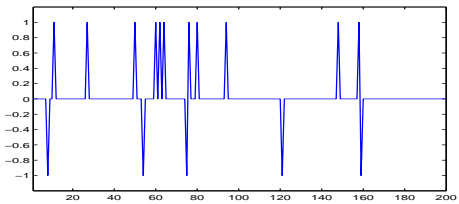}}
\subfigure[$k=40$, SNR=1.7690]{\includegraphics[width=70mm,height=30mm]{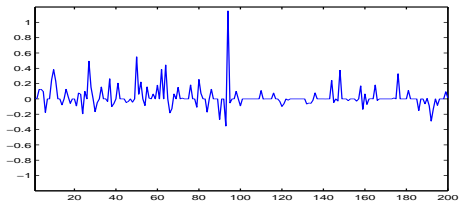}}\\
\subfigure[$k=80$, SNR=4.5921]{\includegraphics[width=70mm,height=30mm]{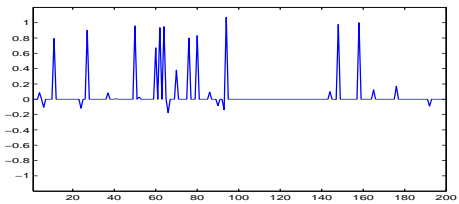}}
\subfigure[$k=100$, SNR=5.0444]{\includegraphics[width=70mm,height=30mm]{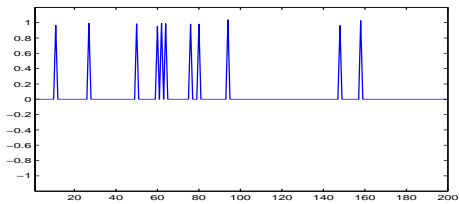}}\\
\caption{(a) Exact signal. (b)--(d) The inversion solution $x^*$ with different iteration number $k$ at a fixed constant $c=2$ and $d=4$.}
\label{fig4}
\end{figure}



\section{Conclusion}\label{sec6}

We analyzed the $\alpha\ell_1-\beta\ell_2$ $(\alpha\geq\beta\geq 0)$ sparsity regularization for nonlinear
ill-posed problems. For the well-posedness of the regularization, compared to the case $\alpha>\beta\geq 0$, we only
obtained the weak convergence for the case $\alpha=\beta\geq 0$. If the nonlinear operator $F'$ is Lipschitz continuous,
we proved that the regularized solution is sparse. Two different convergence rates
$O(\delta^{\frac{1}{2}})$ and $O(\delta)$ were obtained under two widely adopted nonlinear conditions. A soft
thresholding algorithm ST-($\alpha\ell_1-\beta\ell_2$) can be extended to solving the non-convex
$\alpha\ell_1-\beta\ell_2$ $(\alpha\geq\beta\geq 0)$ sparsity regularization for nonlinear ill-posed problems.
Numerical experiments show that the proposed method is convergent and stable. However, for some particular nonlinear CS problems, i.e. the parameter $d$ is an even number, we can only identify the positive impulses.

\end{document}